\documentclass[12pt]{amsart}
\usepackage{amstext, amsmath, amssymb, amsfonts, latexsym}
\usepackage{graphicx}
\usepackage{amsthm}
\usepackage{hyperref}
\usepackage{xcolor}
\usepackage{tikz}

\usepackage[left=3cm, right=3cm, bottom=4cm]{geometry} 
\setlength{\parskip}{0,2cm}
\raggedbottom
\vfuzz2.5pt 
\hfuzz2.5pt 

\numberwithin{figure}{section}

\newtheorem{maintheorem}{Theorem}
\newtheorem{maintheoremb}{Theorem}

\newtheorem{theorem}{Theorem}[section]

\newtheorem{lemma}[theorem]{Lemma}
\newtheorem{corollary}{Corollary}

\newtheorem{example}[theorem]{Example}

\newcommand{\dy}{\displaystyle}
\newcommand{\e}{\ensuremath{\varepsilon}}
\newcommand{\F}{\ensuremath{\mathcal{F}}}
\newcommand{\A}{\ensuremath{\mathcal{A}}}

\newcommand{\Ha}{\ensuremath{\mathcal{H}}}

\newcommand{\N}{\ensuremath{\mathbb{N}}}
\newcommand{\R}{\ensuremath{\mathbb{R}}}

\newcommand{\Pol}{\ensuremath{\mathbb{P}}}
\newcommand{\Exp}{\ensuremath{\mathbb{E}}}
\newcommand{\D}{\ensuremath{\mathbb{D}}}

\newcommand{\OR}{\ensuremath{\mathcal{O}}}
\newcommand{\OG}{\ensuremath{\mathbb{O}}}
\newcommand{\COG}{\ensuremath{\overline{\mathbb{O}}}}

 \pagenumbering{arabic}

\title{Polynomial entropy of Morse-Smale diffeomorphisms on surfaces}

\author{Javier Correa}
\address{Universidade Federal de Minas Gerais}
\email {jcorrea@mat.ufmg.br}

\author{Hellen de Paula} 
\address{Centro Federal de Educação Tecnológica de Minas Gerais} 
\email {hellenlimadepaula@ufmg.br}

\thanks{The second author has been supported by CAPES}

\begin{document}

\begin{abstract}
A classical problem in dynamical systems is to measure the complexity of a map in terms of its orbits, and one of the main concepts used to achieve this goal is entropy. Nonetheless, many interesting families of dynamical systems have every element with zero-entropy. One of these are Morse-Smale diffeomorphisms. In this work, we compute the generalized entropy of Morse-Smale diffeomorphisms on surfaces, based on which we deduce their polynomial entropy. We also apply our technique to compute the dispersion of the orbits of maps on the border of chaos with mild dissipation. 
\end{abstract}

 \maketitle

\vspace{2cm}
\noindent
Keywords: Generalized entropy, Polynomial entropy, Morse-Smale diffeomorphisms.
\\

\noindent
2020 Mathematics subject classification: 37B40, 37D15, 37E30  

 \section{Introduction}

A classical problem in dynamical systems is to measure the complexity of a map in terms of its orbits. One of the main tools we have to achieve this goal is topological entropy. The topological entropy of a map studies the exponential growth rate at which orbits are separated. Although it is a crucial tool for classifying highly chaotic dynamical systems, in many interesting families of dynamical systems, every system has vanishing entropy, and therefore, another tool is needed. J. P. Marco in \cite{Ma13} introduced the concept of polynomial entropy, and recently, the first author and E. Pujals introduced the notion of generalized topological entropy in \cite{CoPu21}. The later definition, instead of quantifying the complexity of a system with a single number, it works directly in the space of the orders of growth $\mathbb{O}$. 

J. P. Marco, C. Labrousse and P. Blanchard were the first to study dynamical systems with vanishing entropy from the perspective of polynomial entropy. First, J. P. Marco introduced the concept of polynomial entropy in \cite{Ma13}, and this topic was subsequently studied in \cite{La12}, \cite{La13-02}, \cite{LaMa14} and \cite{LaBe14} in the context of Hamiltonian dynamical systems and geodesic flows with zero-entropy. For maps on the interval, we have the works of C. Labrousse \cite{La13-01}, M. J. D. Carneiro and J. B. Gomes \cite{CaGo21} and S. Roth, Z. Roth and L. Snoha \cite{RoRoSn21} to refer to. Lastly, for general dynamical systems, we have the work of A. Artigue, D. Carrasco-Olivera and I. Monteverde \cite{ArCaMo17}. 

We would like to coment on two aspects of the work of L. Hauseux and F. Le Roux in \cite{HaRo19}. First, in \cite{CoPu21}, the authors observed that in the study of dynamical system with vanishing entropy, there are examples where the complexity of the overall system is greater than the complexity of the system restricted to its recurrent part. This phenomenon does not occur
in the presence of chaos due to the variational principle. It is our opinion that the definition of wandering entropy given in \cite{HaRo19} should give account of said jump. Second is with respect to the technique they developed to compute the polynomial entropy of a system whose non-wandering set consists of only one fixed point. In this article, we translate this technique to the context of generalized entropy and extend it to maps with a finite non-wandering set. We would like to mention that J. Kati\'{c} and M. Peri\'{c} in \cite{KaPe19} use said approach from \cite{HaRo19} to study Morse gradient systems with the singularities of a specific index. 

Let us consider $S$ a compact surface and $f:S\to S$ a Morse-Smale diffeomorphism. We recall that these maps are the ones whose non-wandering set is finite and, therefore, consists only of periodic points. These periodic points have to be hyperbolic; moreover, the intersection between any stable and any unstable manifolds is transverse. Let us suppose that $\Omega(f)=\cup_{i=1}^k \theta(p_i)$ where $\theta(p_i)$ denotes the orbit of point $p_i$. We can construct a finite graph $G=(V,E)$, where the vertices $V$ are the orbits $\theta(p_i)$ and the edges represent the existence of an orbit whose past is $\theta(p_i)$ and future is $\theta(p_j)$. Thus, we define
\[E=\{(\theta(p_i),\theta(p_j))\in V\times V: i\neq j \text{ and } W^u(\theta(p_i))\cap W^s(\theta(p_j))\neq \emptyset\}.\] 

If the graph $G$ had any cycle, then there would be a non-trivial homoclinic class, and therefore, the non-wandering set would not be finite. Now, let us define $L(f)$ as the length of the longest possible path in $G$. We represent the generalized entropy of $f$ by $o(f)$, and the polynomial entropy of $f$ by $h_{pol}(f)$.

\begin{maintheorem}\label{TeoMS}
Let $S$ be a compact surface and $f:S\to S$ a Morse-Smale diffeomorphism. Then, $o(f)=[n^{L(f)}]$. In particular, $h_{pol}(f) = L(f)$. 
\end{maintheorem}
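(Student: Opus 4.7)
The plan is to prove the two inequalities $o(f) \le [n^{L(f)}]$ and $o(f) \ge [n^{L(f)}]$ separately; the statement on $h_{pol}(f)$ then follows from the definition of polynomial entropy as the exponent extracted from $o(f)$. In both directions the starting point is the Morse-Smale structure on $S$: $\Omega(f)$ is a finite union of hyperbolic sinks, sources and saddles; the graph $G$ is acyclic (any cycle would produce a non-trivial homoclinic class and contradict finiteness of $\Omega(f)$); and $f$ admits a filtration whose successive differences isolate individual periodic orbits. I would fix, once and for all, small linearizing boxes $B_i$ around each orbit $\theta(p_i)$ via Hartman--Grobman, together with narrow ``transition tubes'' along each heteroclinic arc in $W^u(\theta(p_i)) \cap W^s(\theta(p_j))$, in such a way that every sufficiently long orbit segment has an itinerary through the cover which is a path in $G$ of length at most $L(f)$.

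\textbf{Upper bound.} I would classify an orbit segment $\{f^k(x)\}_{0 \le k < n}$ by (i) its itinerary through the finite cover, which is a path of length $\ell \le L(f)$ in $G$, and (ii) the dwell-time vector $(n_0, \ldots, n_\ell)$ recording how many consecutive iterates are spent in each visited box, with $\sum_j n_j = n - O(1)$. The number of itineraries is finite and independent of $n$, while the number of admissible dwell-time vectors is $\binom{n+\ell}{\ell} = O(n^\ell) = O(n^{L(f)})$. Once the itinerary and the dwell-time vector are fixed, local hyperbolic coordinates and the $\lambda$-lemma let one cover the corresponding set of orbits by a bounded number of Bowen $\varepsilon$-balls, by selecting finitely many reference orbits starting on a piece of unstable manifold at the first saddle. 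This yields an $(n,\varepsilon)$-spanning set of size $O(n^{L(f)})$, so $o(f) \le [n^{L(f)}]$.

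\textbf{Lower bound.} Fix a maximal path $\theta(p_{i_0}) \to \theta(p_{i_1}) \to \cdots \to \theta(p_{i_{L(f)}})$ in $G$. For each vector $(n_0, \ldots, n_{L(f)}) \in \mathbb{Z}_{\ge 0}^{L(f)+1}$ with $\sum_j n_j = n$, the $\lambda$-lemma applied inductively along the path produces an orbit that remains in $B_{i_j}$ for exactly $n_j$ consecutive iterates before transiting to $B_{i_{j+1}}$. Distinct vectors yield $(n,\varepsilon_0)$-separated orbits for some $\varepsilon_0>0$ independent of $n$, since at some transition time the corresponding orbits sit in disjoint boxes. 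Counting gives an $(n,\varepsilon_0)$-separated set of cardinality $\binom{n+L(f)}{L(f)} = \Theta(n^{L(f)})$, hence $o(f) \ge [n^{L(f)}]$.

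\textbf{Main obstacle.} The delicate point in both bounds is the iterated use of the $\lambda$-lemma along a chain of $L(f)$ saddles: in the upper bound one must verify that, within a fixed itinerary and dwell-time class, the contracting directions absorb the spread of initial conditions uniformly in $n$; in the lower bound one must check that the expanding directions propagate separation across all $L(f)$ saddles without leaving the linearizing charts too early. The surface hypothesis is what ultimately makes these estimates tractable: each saddle contributes one-dimensional stable and unstable manifolds, so the hyperbolic bookkeeping reduces to essentially one-dimensional calculations, which is precisely what allows the single-fixed-point argument of Hauseux--Le Roux to be upgraded to an arbitrary finite configuration of hyperbolic periodic orbits.
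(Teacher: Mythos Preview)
Your strategy is sound in outline and your lower bound is essentially what the paper does, but the route is genuinely different from the paper's, and your upper bound has a gap where most of the work actually lives.

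\textbf{Comparison with the paper.} The paper does not estimate Bowen balls directly. It first proves a standalone coding theorem (Theorem~\ref{TeoCod}): for any homeomorphism with finite $\Omega(f)$, $o(f)=\sup\{[c_{f,\F}(n)]:\F\in\Sigma\}$, where $\Sigma$ consists of finite families of wandering, mutually singular compact neighborhoods and $c_{f,\F}(n)$ counts length-$n$ codings. The Morse--Smale geometry then enters only to (i) show that any $\F\in\Sigma$ has $\#\F\le L(f)$ (Lemma~\ref{LemLoc}), forcing $[c_{f,\F}(n)]\le[n^{L(f)}]$, and (ii) exhibit one explicit ``combinatorially complete'' family of $L(f)$ rectangles sitting in linearizing quadrants at consecutive saddles, for which $[c_{f,\F}(n)]=[n^{L(f)}]$. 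The paper also handles the passage from periodic to fixed points via a bounded-jump/syndetic argument (Lemmas~\ref{LemBouJum}--\ref{LemCom}), which your sketch does not address. Your rectangle-free, direct approach is attractive, but the paper's coding layer is precisely what absorbs the delicate step you defer.

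\textbf{The gap.} Your upper bound rests on the claim that, for fixed itinerary and dwell-time vector, the corresponding set of orbit segments is covered by $O(1)$ Bowen $(n,\varepsilon)$-balls. This is not a consequence of the $\lambda$-lemma. If your boxes $B_i$ are linearizing neighborhoods of fixed size (as your phrasing ``local hyperbolic coordinates'' suggests), then two orbits with the same dwell time at a saddle can differ by the full box diameter in the stable coordinate at the entry time, so they are certainly $(n,\varepsilon)$-separated; bounding the number of such orbits by a constant independent of $n$ and of the dwell times requires tracking how the one-dimensional freedom at the source propagates through all the transitions, and this is exactly the bookkeeping that the paper's Lemmas~\ref{Lem2}--\ref{Lem4} carry out in the coding language. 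If instead you take the $B_i$ of diameter $<\varepsilon$, then each class is trivially one Bowen ball, but you must then prove that an orbit segment visits each $B_i$ for at most one maximal block and that the total time outside $\bigcup_i B_i$ is bounded uniformly in $n$; this is true for Morse--Smale systems but needs an argument (it is essentially what forces the itinerary to be a path, not a walk, in $G$). Either way, the sentence ``the $\lambda$-lemma lets one cover \ldots'' hides the main content of the upper bound. Your lower bound, by contrast, is fine: the inductive $\lambda$-lemma construction you describe is exactly the content of the paper's horizontal/vertical-strip argument, just phrased less explicitly.
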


As an immediate corollary, we have a rigidity result. 

\begin{corollary}\label{CorRig}
Let $S$ be a compact connected surface and $f:S\to S$ a Morse-Smale diffeomorphism such that $o(f)=[n]$. Then, $S=S^2$ and $f$ is a North-South dynamical system. 
\end{corollary}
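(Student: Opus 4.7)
The plan is to unwind Theorem \ref{TeoMS}. From $o(f)=[n^{L(f)}]=[n]$ we get $L(f)=1$, so the graph $G$ has at least one edge but no directed path of length $2$. First I would rule out saddle periodic orbits: if $\theta(q)$ were a saddle, the $\alpha$-limits of points on $W^s(\theta(q))\setminus\theta(q)$ would give an incoming edge $\theta(p)\to\theta(q)$ and the $\omega$-limits of points on $W^u(\theta(q))\setminus\theta(q)$ an outgoing edge $\theta(q)\to\theta(r)$, concatenating to a path of length $2$. So $\Omega(f)$ consists only of source and sink orbits.

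Write $S=\{\text{sources}\}\sqcup\bigsqcup_\alpha \tilde B_\alpha$, where $\tilde B_\alpha$ ranges over the connected components of the basins of the sink orbits. Each $\tilde B_\alpha$ is the basin of a single hyperbolic fixed point of some iterate $f^p$, hence open and contractible. The key local observation is that if $s$ is a source and $D$ is a small disk neighborhood of $s$ in $S$, then $D\setminus\{s\}$ is open, connected, and disjoint from every periodic orbit, so it lies in a single component $\tilde B_{\alpha(s)}$. Hence each source belongs to the closure of exactly one $\tilde B_\alpha$, and has a full disk neighborhood inside that closure.

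Consequently $\overline{\tilde B_\alpha}=\tilde B_\alpha\cup\{s:\alpha(s)=\alpha\}$ is a connected closed topological $2$-manifold without boundary, embedded in the connected closed surface $S$; comparing dimensions forces $\overline{\tilde B_\alpha}=S$. Any other component $\tilde B_{\alpha'}$ would then be a nonempty open subset of $S$ disjoint from both $\tilde B_\alpha$ and the source set, which is impossible. So there is a unique basin component, arising from a unique fixed sink. Viewing the decomposition as a CW structure on $S$ with $n=\#\{\text{sources}\}\geq 1$ zero-cells and one two-cell gives $\chi(S)=n+1\geq 2$, and since $\chi\leq 2$ on any closed surface this forces $\chi(S)=2$ and $n=1$. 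Therefore $S=S^2$ and $f$ has exactly one fixed source and one fixed sink, i.e., $f$ is a North-South dynamical system.

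The main obstacle I expect is justifying that $\overline{\tilde B_\alpha}$ is a $2$-manifold at the added source points; this is what turns the basin decomposition into a CW structure and lets us read off the global topology. The step reduces to the local fact that a punctured disk around a source is captured by a single basin component, and this is precisely where the compactness and connectedness of $S$ together with the absence of saddles enter the argument in an essential way.
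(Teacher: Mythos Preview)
Your argument is correct and follows the same skeleton as the paper's: from Theorem~\ref{TeoMS} deduce $L(f)=1$, rule out saddles by producing a length-$2$ path through any saddle, partition $S$ into source points and sink basins, and use connectedness of $S$ to force a single sink basin. The paper's proof is terser at the final topological step: it simply observes that the unique sink basin is homeomorphic to $\R^2$, so $S$ is $\R^2$ together with the finite set of sources, and since $\R^2$ has only one end this forces a single source and $S=S^2$. Your route via the Euler characteristic, $\chi(S)=\chi(\R^2)+n=n+1\leq 2$, reaches the same conclusion and is arguably more transparent.

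One small remark: calling the decomposition a CW structure is slightly loose, since a $2$-cell with no $1$-cells can only attach to a single $0$-cell. The Euler characteristic identity you want, $\chi(S)=\chi(S\setminus\{n\ \text{points}\})+n$, is better justified directly (e.g.\ via a triangulation of $S$ with the sources as vertices) rather than by invoking a CW structure that, for $n\geq 2$, does not literally exist without $1$-cells. This does not affect the validity of your conclusion.
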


We can apply theorem \ref{TeoMS} to understand the dispersion of orbits in the border of zero-entropy. Consider $Emb^r(\D^2)$ the space of 
$C^r$ embeddings of the disk $\D^2$ into itself. In \cite{GaStTr89}, J. M. Gambaudo, S. van Strein and C. Tresser constructed a system $f\in Emb^r(\D^2)$ that is Kupka-Smale and infinitely re-normalizable and happens to be in the border of chaos. This means that it can be perturbed into a system with positive classical entropy. The said example is also the limit of Morse-Smale diffeomorphisms and has a geometry that allow us to apply theorem \ref{TeoMS}. We would like to highlight that maps with said properties are far from being an isolated example. In \cite{CrPu16}, S. Crovisier and E. Pujals introduced the concept of mildly dissipative maps in $Emb^r(\D^2)$, and with C. Tresser in \cite{CrPuTr20}, they proved that any map with mild dissipation in the border of chaos is infinitely re-normalizable. In order for our following result to be a direct application of theorem \ref{TeoMS}, we need an additional hypothesis that we shall call standard geometry and explain in detail in subsection \ref{SubSecKS}. This property holds in the Gambaudo - van Strein - Tresser example as well as the Hénon-like maps with small Jacobian. The latter is proved by A. de Carvalho, M. Lyubich and M. Martens in \cite{CaLyMa05}. 

Let $\mathbb{P}$ stand for the family of polynomial orders of growth $[n^t]$ with $0<t<\infty$.  

\begin{corollary}\label{CorKS}
Let $f\in Emb^r(\D^2)$ be a Kupka-Smale and infinitely re-normalizable with standard geometry. Then, $o(f)= \sup(\Pol)$. In particular, $h_{pol}(f)=\infty$ and $h(f)=0$. Moreover, the Kupka-Smale property is dense among the mildly dissipative maps of the disk with zero-entropy and infinitely re-normalizable (with or without standard geometry).
\end{corollary}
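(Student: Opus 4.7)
The plan is to use Theorem~\ref{TeoMS} as a black box at each renormalization level and then pass to the limit. Infinite re-normalizability gives a nested sequence of periodic disks $D_0 \supset D_1 \supset D_2 \supset \cdots$ with strictly increasing periods, and at each level the dynamics outside $D_k$, together with the periodic orbits born through the first $k$ renormalizations, assembles into a finite Morse-Smale picture. The standard geometry hypothesis, to be explained in subsection~\ref{SubSecKS}, is exactly what is needed so that this truncated picture is a genuine Morse-Smale configuration: the periodic sinks and saddles, together with their heteroclinic intersections, form a finite graph $G_k=(V_k,E_k)$ whose longest path $L_k$ grows without bound as $k\to\infty$ (typically at least linearly in $k$, one new saddle-node pair per renormalization).

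For the lower bound $o(f) \geq \sup(\Pol)$, I would import the construction used in the proof of Theorem~\ref{TeoMS} into the present setting: a path of length $L_k$ in $G_k$ produces a heteroclinic chain whose neighborhoods furnish an $(n,\varepsilon_k)$-separated set of cardinality of order $n^{L_k}$. This realises $[n^{L_k}]$ as a lower bound for $o(f)$ for every $k$, so $o(f) \geq [n^t]$ for every $t<\infty$ and hence $o(f) \geq \sup(\Pol)$ in $\OG$.

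For the matching upper bound I would use that, under Kupka-Smale and zero entropy, the non-wandering set of $f$ splits as the countable union of the finite Morse-Smale skeletons $\Omega_k$ together with a Cantor-type core on which $f$ acts as an odometer. The odometer component is equicontinuous and contributes only the bounded order of growth; meanwhile, for fixed scale $\varepsilon>0$ only finitely many renormalization levels are $\varepsilon$-resolvable, so the number of $(n,\varepsilon)$-separated orbits is bounded above by a polynomial in $n$ of degree $L_{k(\varepsilon)}$. Letting $\varepsilon \to 0$ gives $o(f)\leq \sup(\Pol)$, and combined with the lower bound one obtains $o(f)=\sup(\Pol)$. The consequences $h_{pol}(f)=\infty$ and $h(f)=0$ are then immediate, the latter being in fact already guaranteed by infinite re-normalizability.

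For the density statement I would combine the classical Kupka-Smale genericity argument with \cite{CrPuTr20}: given a mildly dissipative $f\in Emb^r(\D^2)$ with zero entropy that is infinitely re-normalizable, any sufficiently small $C^r$ perturbation remains mildly dissipative (an open condition), and provided we perturb transverse to the horseshoe locus it remains zero-entropy and hence, again by \cite{CrPuTr20}, infinitely re-normalizable. Within this restricted open set a generic perturbation makes all periodic points hyperbolic and all heteroclinic intersections transverse, yielding the desired density. The main technical obstacle I anticipate is the upper bound: one must carefully control the interaction between the finitely many Morse-Smale levels resolvable at scale $\varepsilon$ and the equicontinuous core, and verify that the standard geometry hypothesis is precisely what keeps this interaction polynomial in $n$ rather than super-polynomial.
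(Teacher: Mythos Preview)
Your argument for $o(f)=\sup(\Pol)$ is close to the paper's and essentially correct, though you take an unnecessary detour. The paper does not analyze any odometer or equicontinuous core. Instead, standard geometry is used in one line: given $\varepsilon>0$, pick $m$ with $\operatorname{diam}(D^m_i)\le\varepsilon$ for all $i$; since the disks $D^m_i$ are periodic, $V^m$ is covered by a \emph{fixed} number of $(n,\varepsilon)$-balls for every $n$, so $[g_{f,\varepsilon}(n)]=[g_{f,\varepsilon,K^m}(n)]$ where $K^m=\overline{\D^2\setminus V^m}$. This immediately gives $o(f)=\sup_m o(f_{|K^m})$, and each $f_{|K^m}$ extends to a Morse-Smale embedding $\hat f_m$ of the disk with $L(\hat f_m)\to\infty$, so Theorem~\ref{TeoMS} finishes. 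Your version reaches the same conclusion, but the odometer discussion is not needed and would require extra justification.

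Your density argument, however, has a genuine gap. You want to perturb generically to Kupka-Smale and then invoke \cite{CrPuTr20} to recover infinite re-normalizability from zero entropy. But the theorem in \cite{CrPuTr20} requires the map to be on the \emph{border of chaos}, not merely to have zero entropy; a Morse-Smale map has zero entropy and is certainly not infinitely re-normalizable. A generic small perturbation of $f$ will typically destroy the nested periodic disk structure, and there is no mechanism in your sketch that recovers it. The phrase ``perturb transverse to the horseshoe locus'' does not help: staying inside the zero-entropy region does not keep you on its boundary.

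The paper's approach is fundamentally different. It builds the Kupka-Smale approximation by hand, level by level: at stage $m$ one perturbs with support contained in the interior of $K^m$ (equivalently, disjoint from $V^m$), making the finitely many periodic points in $K^m$ hyperbolic and their invariant manifolds transverse, with $C^r$-size at most $\varepsilon/2^m$. Because the support never touches the disks $D^{m'}_i$ for $m'\ge m$, every disk remains exactly periodic throughout the process, so the limit map $g$ is still infinitely re-normalizable with the \emph{same} disk structure. The absence of periodic points on $\partial V^m$ and the bounded periods in each $K^m$ are what make this localized perturbation possible. This is the idea your proposal is missing.
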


Regarding the generalized entropy in the border of chaos, the standard geometry property is probably removable. However, a deeper understanding of the dispersion of wandering orbits near recurrent sets is needed. The Kupka-Smale property allows us to apply theorem \ref{TeoMS} directly, yet it may not be necessary. It seems intuitive for us that $o(f)= \sup(\Pol)$ should hold for every map in said set. 

We can also deduce that the map $f\mapsto o(f)$ is continuous in some sense. 

\begin{corollary}\label{CorCont}
Let us consider a sequence of maps $\{f_k\}_{k\in\N} \in Emb^r(\D^2)$ and $f\in Emb^r(\D^2)$, all of them mildly dissipative. Suppose that $f_k$ is Morse-Smale, while $f$ is Kupka-Smale and infinitely re-normalizable with standard geometry. If $\lim_k f_k = f$ in the $C^r$ topology, then $``\lim_k o(f_k)"= \sup\{o(f_k)\}=o(f)$. In particular, $\lim_k h_{pol}(f_k)=h_{pol}(f)$.
\end{corollary}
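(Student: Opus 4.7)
The plan is to combine Theorem \ref{TeoMS} applied to each $f_k$ with Corollary \ref{CorKS} applied to $f$, reducing the entire statement to a single quantitative claim: the graph-lengths $L(f_k)$ tend to infinity with $k$. Theorem \ref{TeoMS} gives $o(f_k) = [n^{L(f_k)}]$ and $h_{pol}(f_k) = L(f_k)$, while Corollary \ref{CorKS} gives $o(f) = \sup(\Pol)$ and $h_{pol}(f) = \infty$. Once $L(f_k) \to \infty$ is established, everything follows: the set $\{[n^{L(f_k)}]\}_{k \in \N}$ is cofinal in $\Pol$, hence $\sup_k o(f_k) = \sup(\Pol) = o(f)$ in $\OG$, and $\lim_k h_{pol}(f_k) = \lim_k L(f_k) = \infty = h_{pol}(f)$. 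This is what the informal identification $``\lim_k o(f_k)"= \sup\{o(f_k)\}$ in the statement is meant to encode.

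To prove $L(f_k) \to \infty$, I would fix an arbitrary $N \in \N$ and exhibit, for $k$ large enough, a path of length $N$ in the graph $G(f_k)$. Since $f$ is infinitely re-normalizable with standard geometry, its renormalization tower produces, at the $N$-th level, a configuration of $N+1$ hyperbolic periodic orbits $\theta(q_0), \ldots, \theta(q_N)$ (hyperbolicity coming from Kupka-Smale) together with heteroclinic connections $W^u(\theta(q_{i-1})) \cap W^s(\theta(q_i)) \neq \emptyset$ for $i = 1, \ldots, N$. These are precisely the chains that force $o(f) = \sup(\Pol)$ in Corollary \ref{CorKS}, so they are available from the geometric picture behind that statement.

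The persistence step is then classical: because each $q_i$ is hyperbolic and each heteroclinic intersection is transverse (Kupka-Smale again), the implicit function theorem together with the $\lambda$-lemma imply that, for $k$ sufficiently large, $f_k$ has nearby hyperbolic periodic orbits $\theta(q_i^k)$ of the same index, and the transverse heteroclinic intersections between consecutive $\theta(q_i^k)$ persist. Hence $(\theta(q_0^k), \ldots, \theta(q_N^k))$ is a path of length $N$ in $G(f_k)$, giving $L(f_k) \geq N$. Since $N$ was arbitrary, $L(f_k) \to \infty$.

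The main obstacle is extracting, from the proof of Corollary \ref{CorKS}, an explicit chain of $N+1$ periodic orbits and heteroclinic connections inside $f$ for every $N$. This is exactly where the standard geometry hypothesis does its work, controlling how the stable and unstable manifolds of the renormalization periodic points are nested. Once such a chain is in hand, the persistence argument is a routine consequence of hyperbolicity and transversality under $C^r$ perturbations, and the three equalities claimed by the corollary fall out together.
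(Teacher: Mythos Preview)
Your proposal is correct and follows essentially the same approach as the paper: reduce to $L(f_k)\to\infty$ via Theorem~\ref{TeoMS} and Corollary~\ref{CorKS}, then use the chains of hyperbolic periodic orbits with transverse heteroclinic connections furnished by the renormalization structure of $f$ (these are exactly the chains in the restricted maps $f_{|K^m}$ from the proof of Corollary~\ref{CorKS}), and finally invoke persistence of hyperbolic periodic points and transverse intersections under $C^r$ perturbation. One minor remark: the standard geometry hypothesis is used in Corollary~\ref{CorKS} to control the entropy of $f$ from above, not to produce the chains themselves---those come from the infinitely re-normalizable structure together with Kupka--Smale; but this does not affect your argument.
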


In retrospect, in the naturality of this result lies an indication of why theorem \ref{TeoMS} is also true. We know that the polynomial entropy of Morse-Smale maps is positive because it has wandering points. When we consider a map in the border of chaos, we naturally want that $h_{pol}(f)=\infty$ and if it is approximated by Morse-Smale, the polynomial entropy of them should grow toward infinity. It is expected that maps in the border of chaos show a reminiscent phenomenon of the period doubling cascade of bifurcations for interval maps. In this context, the only dynamical quantity that is growing and has a global understanding of the map is $L(f)$. 

Interestingly enough, in their recent work \cite{RoRoSn21}, S. Roth, Z. Roth and L. Snoha proved the counterpart of our results in the one-dimensional context. They showed that for endomorphisms of the interval with zero entropy, their polynomial entropy is equal to a quantity equivalent to our $L(f)$, and they obtained analogous results to our corollaries \ref{CorKS} and \ref{CorCont}. In particular, for the logistic map, they showed that in each bifurcation of the period doubling cascade, the polynomial entropy increases by $1$, as well as that the infinitely re-normalizable map has infinite polynomial entropy. 

The proof of theorem \ref{TeoMS} has two steps: understanding the geometrical configuration of a Morse-Smale and computing the generalized entropy through a nice codification of orbits. In the study of the separation of the orbits of wandering dynamics, Hauseux and Le Roux introduced the concept of wandering mutually singular sets in \cite{HaRo19}. We would like to point out that for dynamical systems with finite non-wandering set, this concept is in some sense the equivalent of the Markov partitions for uniformly hyperbolic systems.

We denote $\Omega(f)$ as the non-wandering set of $f$. We say that the subsets $Y_1,\cdots, Y_L$ of $M\setminus \{\Omega(f)\}$ are mutually singular if for every $N>0$, there exists a point $x$ and 
times $n_1,\cdots, n_L$ such that $f^{n_i}(x)\in Y_i$ for every $i=1,\cdots, L$, and $|n_i -n_j|>N$ for every $i\neq j$. 

\setcounter{maintheoremb}{1}
\begin{maintheoremb}\label{TeoCodB}
Let us consider $f:M\to M$ as a homeomorphism of a compact metric space whose non-wandering set is finite. Then, its generalized entropy is the supremum of the orders of growth associated with the codification of the orbits using mutually singular sets. 
\end{maintheoremb}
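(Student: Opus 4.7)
The plan is to prove two inequalities relating $o(f)$ to codifications by mutually singular families. First I would make precise what ``order of growth associated with the codification'' should mean: given mutually singular sets $Y_1,\ldots,Y_L\subset M\setminus\Omega(f)$, set $c_n(Y_1,\ldots,Y_L)$ equal to the number of distinct ordered tuples $(n_1,\ldots,n_L)\in\{0,\ldots,n\}^L$ for which some $x\in M$ satisfies $f^{n_i}(x)\in Y_i$ for every $i$. The mutual singularity hypothesis forces $c_n$ to grow polynomially in $n$, and the theorem claims $o(f)=\sup_{(Y_1,\ldots,Y_L)}[c_n(Y_1,\ldots,Y_L)]$ in $\OG$.

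The easy direction, $o(f)\geq[c_n(Y_1,\ldots,Y_L)]$, is proved by choosing $\varepsilon_0>0$ smaller than the pairwise distances between the $Y_i$: two points realizing distinct tuples must be $\varepsilon_0$-separated at one of the prescribed times, so $c_n(Y_1,\ldots,Y_L)\leq s_n(\varepsilon_0,M)$, and taking the supremum over families yields the desired inequality in $\OG$.

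The reverse inequality is the heart of the proof and extends the Hauseux--Le Roux technique from a single fixed point to an arbitrary finite non-wandering set. Finiteness of $\Omega(f)$ and the absence of cycles supply a partial order $\theta(p)\prec\theta(q)$, defined by the existence of an orbit with $\alpha$-limit $\theta(p)$ and $\omega$-limit $\theta(q)$. Fixing $\varepsilon>0$, I would choose small closed neighborhoods $\{U_p\}$ of the periodic orbits so that the complement $W=M\setminus\bigcup_p U_p$ is a compact wandering region through which every orbit passes for only a uniformly bounded number of iterates; the uniform bound comes from a Conley-style Lyapunov function adapted to the finite partial order. Any $(n,\varepsilon)$-separated set must split while inside $W$, so covering $W$ by a finite collection $\{Y_j\}$ of sets of diameter less than $\varepsilon/3$ and coding each orbit by its sequence of visits to this collection produces a codification whose order of growth dominates $[s_n(\varepsilon,M)]$.

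The main obstacle is to verify that the codifying collection $\{Y_j\}$ so extracted is genuinely mutually singular, i.e.\ admits, for every $N$, a single orbit visiting each $Y_j$ at times with pairwise gaps exceeding $N$. Such orbits are to be built by concatenating wandering segments along a chain $\theta(p_{i_0})\prec\cdots\prec\theta(p_{i_m})$ in the partial order, with the passage time near each intermediate periodic orbit made arbitrarily large. Implementing this construction for a general homeomorphism, using only finiteness of $\Omega(f)$ and compactness of $M$ rather than a hyperbolic structure, is the delicate step: one must rely on the purely topological consequences of the partial order to guarantee that the passage times can be prescribed independently, and combine this with a pigeonhole argument to reduce an arbitrary separating set to those separations realized by such long-gap orbits. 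This combinatorial-geometric matching is the principal technical burden of the argument.
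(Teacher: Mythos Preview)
Your easy direction is essentially the paper's Lemma~\ref{Lem1}, and the idea of covering the wandering region by small sets to dominate $s_{f,\varepsilon}(n)$ is the content of Lemma~\ref{Lem2}. The gap is in what you do next.

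You propose to show that the covering family $\{Y_j\}$ itself is mutually singular, by building a single orbit that visits every $Y_j$ with arbitrarily large gaps. This is false in general, and no amount of chain-concatenation will rescue it. Already for the North--South map on $S^2$, the wandering region is an annulus through which every orbit passes in uniformly bounded time; a cover of this annulus by small sets has, say, fifty elements, but no orbit visits more than a fixed handful of them, let alone all fifty with prescribed gaps. Your concatenation scheme also presupposes a shadowing-type mechanism (orbits near a periodic point with independently prescribable passage time) that a bare homeomorphism with finite $\Omega(f)$ need not provide; you acknowledge this as ``the delicate step'', but it is not merely delicate---it requires hyperbolic structure that the theorem does not assume.

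The paper never attempts to prove mutual singularity of the covering family. Instead it reduces combinatorially: starting from the cover $\mathcal F$ produced by Lemma~\ref{Lem2}, Lemma~\ref{Lem3} passes to disjoint subfamilies, and Lemma~\ref{Lem4} observes that if a disjoint family $\mathcal F$ is \emph{not} mutually singular, then some pair $Y_i,Y_j$ admits a uniform bound $M$ on the time gap between visits, so erasing the letter $Y_i$ from every word costs at most a factor $2M$. Iterating, $[c_{f,\mathcal F}(n)]$ is bounded by the supremum of $[c_{f,\mathcal F'}(n)]$ over mutually singular $\mathcal F'\subset\mathcal F$. No orbit construction, no Lyapunov function, no partial order is needed; the argument is purely about words. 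This reduction step is the idea your proposal is missing.
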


A more precise statement of this theorem is given in subsection \ref{SubCod}. We want to now discuss the interplay of theorems \ref{TeoMS}, \ref{TeoCodB} with the main theorem in \cite{HaRo19}. A consequence of theorem \ref{TeoCodB} is that the generalized entropy of a homeomorphism with a finite non-wandering set always verifies $[n]\leq o(f)\leq \sup(\mathbb{P})$. Let us consider $\Ha$ as a family of homeomorphism on a surface such that its non-wandering set is finite. Let us also consider $\Ha_1$ as the subset of $\Ha$ such that the surface is the sphere and its non-wandering set is only one fixed point. Part of the translation of the result in \cite{HaRo19} in our context is that for every $t\geq 2$, there exists $f\in \Ha_1$ such that $o(f) = [n^t]$. This result in conjunction with our previous observation can be summarized as follows:

\emph {For every $f\in \Ha$, $[n]\leq o(f)\leq \sup(\mathbb{P})$. Moreover, for every $t\geq 2$, there exists $f\in \Ha$ such that $o(f)=[n^t]$.} 

Theorem \ref{TeoMS} tells us that for those maps in $\Ha$ that are Morse-Smale diffeomorphisms, the set $\{o(f)\in \COG\}$ becomes discrete. It is not clear to us whether the differentiability plays a key role in this. We wonder if there is an obstruction for the examples built in \cite{HaRo19} to be differentiable and whether this contrast is another case of pathological differences between the continuous world and the differentiable one.  

A final comment in this topic is made toward the jump from $[n]$ to $[n^2]$ for the homeomorphisms in the sphere. In \cite{HaRo19}, the authors quote a result that seems to apply only for homeomorphisms in $\Ha_1$, and we would like to know if it possible to obtain a map in $\Ha$ for every $t\in (1,2)$ such that $o(f)=[n^t]$. 

We finish this introduction with a small discussion regarding the higher dimension. From theorem \ref{TeoCodB} and an intermediary lemma for theorem \ref{TeoMS}, we deduce the following result. 

\begin{corollary}\label{CorDimN}
Consider $M^n$ a compact manifold of dimension $n$ and $f:M\to M$ a Morse-Smale diffeomorphism. In that case, $o(f)\leq [n^{L(f)}]$. 
\end{corollary}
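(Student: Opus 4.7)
The plan is to apply Theorem~\ref{TeoCodB} and upper bound the orders of growth attached to codifications of orbits by mutually singular sets. The key ingredient specific to Morse-Smale dynamics is an itinerary lemma: every non-periodic orbit shadows, both in forward and backward time, a single directed path in the graph $G$, whose length is by definition bounded by $L(f)$. This lemma is purely combinatorial/topological and uses only the no-cycle property of Morse-Smale systems (which is already spelled out in the paper as a consequence of the non-wandering set being finite), so it holds in every dimension. This is the intermediary lemma alluded to in the statement.

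Concretely, I would first fix pairwise disjoint isolating neighborhoods $U_1,\ldots,U_k$ of the orbits $\theta(p_1),\ldots,\theta(p_k)$, chosen small enough that every orbit in $M\setminus\bigcup_i U_i$ crosses this complement in uniformly bounded time. Using the no-cycle property, any $x\in M$ with non-periodic orbit has an itinerary with respect to $\{U_1,\ldots,U_k\}$ of the form $i_1,\ldots,i_\ell$ where $\theta(p_{i_1})\to\cdots\to\theta(p_{i_\ell})$ is a directed path in $G$, so $\ell\leq L(f)$.

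Now let $Y_1,\ldots,Y_L$ be any mutually singular family in $M\setminus\Omega(f)$. Each $Y_j$ sits in some heteroclinic region and, by compactness together with mutual singularity, is visited by any given orbit only inside a single transit window of its itinerary, of length $O(1)$. Counting codifications of orbits up to time $n$ then reduces to counting, for each directed path of length $\ell\leq L(f)$ in the finite graph $G$, the ordered tuples of transit times $0\leq n_1<\cdots<n_\ell\leq n$; this gives at most $O(n^{L(f)})$ codifications in total. Taking the supremum over all mutually singular families and invoking Theorem~\ref{TeoCodB} yields $o(f)\leq[n^{L(f)}]$.

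The main obstacle I expect is the itinerary lemma itself: phrasing it uniformly so that the transit-time counting goes through for \emph{every} admissible family of mutually singular sets, and not just for the canonical neighborhoods $U_i$, requires some care. The counting step is then routine. Only the inequality, and not the equality of Theorem~\ref{TeoMS}, is obtained because in dimension $n\geq 3$ two distinct paths in $G$ may fail to contribute separated codifications: matching the lower bound for surfaces relies on a transversality/dimension argument proper to dimension~$2$, with no direct analog in higher dimension.
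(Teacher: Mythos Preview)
Your overall strategy---apply Theorem~\ref{TeoCod} and bound the coding growth for mutually singular families---is the right one, but the paper takes a much shorter route, and your counting step has a gap.

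The paper's proof is three lines. Rather than an itinerary lemma about \emph{all} orbits followed by a transit-time count, it bounds the \emph{cardinality} of any mutually singular family directly. Lemma~\ref{LemLoc} (whose proof uses only accumulation of long orbit segments and Hartman--Grobman, hence works in every dimension) shows that the sets $Y_1,\dots,Y_L$ of any $\F\in\Sigma_\delta$ lie along stable and unstable manifolds of periodic orbits forming a directed path in $G$, so $L=\#\F\le L(f)$. Then the elementary combinatorial bound of Lemma~\ref{LemUppBou}---each wandering $Y_j$ appears at most once in any coding, so $[c_{f,\F}(n)]\le[n^L]$---immediately gives $[c_{f,\F}(n)]\le[n^{L(f)}]$, and Theorem~\ref{TeoCod} finishes.

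Your counting argument, by contrast, is not justified as written. You claim each $Y_j$ is visited ``only inside a single transit window of length $O(1)$,'' but the $Y_j$'s live in $M\setminus\Omega(f)$, not in $M\setminus\bigcup_i U_i$: a $Y_j$ can sit well inside some isolating neighborhood $U_i$ and be hit at an arbitrary moment during a long sojourn near $p_i$, not within any bounded window. Without first establishing $L\le L(f)$, your parametrization by $\ell\le L(f)$ transit times does not control the $L$ independent positions in the codification. Your itinerary lemma \emph{can} be made to yield $L\le L(f)$---apply it to the single orbit witnessing mutual singularity for $N$ larger than the uniform transit bound, so that the $L$ visits fall in distinct sojourns along a path in $G$---but once that is in hand you have essentially reproved Lemma~\ref{LemLoc}, and the transit-time bookkeeping becomes superfluous: Lemma~\ref{LemUppBou} already finishes the proof.
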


Regarding the other inequality, it is unclear to us if our technique holds. The main obstruction we see is the possible interaction between the periodic saddles of different indices. 

We would like to thank Enrique Pujals for the many discussions and insightful comments. 

This work is structured as follows:
\begin{itemize}
\item In section \ref{SecPreSta}, we provide a quick summary of the preliminaries on the topic of generalized entropy, mildly dissipative maps and the coding of orbits developed in \cite{HaRo19}. We also make a precise statement for theorem \ref{TeoCodB}.
\item In section \ref{SecTeoMS}, we prove theorem \ref{TeoMS}, assuming theorem \ref{TeoCodB}, and we prove corollaries \ref{CorRig}, \ref{CorKS}, \ref{CorCont} and \ref{CorDimN}.
\item In section \ref{SecTeoCod}, we prove theorem \ref{TeoCodB}.
\end{itemize}
 \section{Preliminaries and Statements}\label{SecPreSta}

\subsection{Orders of growth and generalized entropy}

Let us briefly recall how the complete set of the orders of growth and the generalized entropy of a map are defined in \cite{CoPu21}. First, we consider the space of non-decreasing sequences in $[0,\infty)$: $$\mathcal{O}=\{a:\mathbb{N}\rightarrow [0,\infty):a(n)\leq a(n+1),\, \forall n\in \mathbb{N}\}.$$
Next, we define the equivalence relationship $\approx$ in $\OR$ by $a(n)\approx b(n)$ if and only if there exist $c_1,c_2\in (0,\infty)$ such that $c_1 a(n)\leq b(n)\leq c_2 a(n)$ for all $n\in \mathbb{N}$. Since the two sequences are related, if both have the same order of growth, we call the quotient space $\displaystyle \mathbb{O}=\mathcal{O}/_{\approx}$ as the space of the orders of growth. If $a(n)$ belongs to $\mathcal{O}$, we are going to denote $[a(n)]$ as the associated class in $\mathbb{O}$. If a sequence is defined by a formula (for example, $n^2$), then the order of growth associated will be represented by the formula between the brackets ($[n^2]\in \mathbb{O}$).

Once $\mathbb{O}$ has been constructed, we define on it a very natural partial order. We say that $[a(n)] \leq [b(n)]$ if there exists $C>0$ such that $a(n) \leq Cb(n)$, for all $n\in\mathbb{N}$. With this, we consider $\overline{\mathbb{O}}$ the Dedekind-MacNeille completion. This is the smallest complete lattice that contains $\mathbb{O}$. In particular, it is uniquely defined and we will always consider that $\mathbb{O}\subset \overline{\mathbb{O}}$. We will also call $\overline{\mathbb{O}}$ the complete set of the orders of growth. 

Now, we proceed to define the generalized entropy of a dynamical system in the complete space of the orders of growth. Given $(M,d)$, a compact metric space and $f:M\rightarrow M$ a continuous map. We define in $M$ the distance 
\[d^{f}_{n}(x,y)=\max \{d(f^k(x),f^k(y)); 0\leq k \leq n-1\},\]
 and we denote the dynamical ball as $B(x,n,\e)=\{y\in M; d^{f}_{n}(x,y)<\e\}$. A set $G\subset M$ is a $(n,\e)$-generator if $\displaystyle M=\cup_{x\in G} B(x,n,\e)$. Given the compactness of M, there always exists a finite $(n,\e)$-generator set. Then, we define $g_{f,\e}(n)$ as the smallest possible cardinality of a finite $(n,\e)$-generator. If we fix $\e>0$, then $g_{f,\e}(n)$ is an increasing sequence of natural numbers,
and therefore, $g_{f,\e}(n) \in \mathcal{O}$. For a fixed $n$, if $\e_1<\e_2$, then $g_{f,\e_1} (n) \geq g_{f, \e_2}(n)$, and therefore, $[g_{f,\e_1}(n)]\geq [g_{f,\e_2}(n)]$ in $\mathbb{O}$. We consider the set $G_f=\{[g_{f,\e}(n)]\in \mathbb{O}:\e>0\}$, and the generalized entropy of $f$ as 
$$o(f)=\text{\textquotedblleft}\lim_{\e\rightarrow 0}"[g_{f,\e}(n)] =\sup G_f \in \overline{\mathbb{O}}. $$

This object is a dynamical invariant.
\begin{theorem}[Correa-Pujals]\label{TeoCoPu01}
	Let $M$ and $N$ be two compact metric spaces and $f:M\to M$, $g:N\to N$, two continuous maps. Suppose there exists $h:M\to N$, a homeomorphism, such that $h\circ f = g \circ h$. Then, $o(f)=o(g)$.
\end{theorem}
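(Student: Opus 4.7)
My plan is to adapt the classical argument for the topological-entropy case, translating it from a single real number to the class $[g_{f,\e}(n)]\in\COG$. The heart of the argument is that the conjugacy $h$ transfers $(n,\delta)$-generators for $g$ into $(n,\e)$-generators for $f$ of the same cardinality, with $\delta$ depending on $\e$ through the uniform continuity of $h^{-1}$, and that this cardinality bound survives the passage to orders of growth.

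First I would iterate the conjugacy relation to obtain $h\circ f^k=g^k\circ h$ for every $k\in\N$, which yields
\[d_N\bigl(g^k(h(x)),g^k(h(y))\bigr)=d_N\bigl(h(f^k(x)),h(f^k(y))\bigr)\]
for all $x,y\in M$. By compactness of $M$ and $N$, both $h$ and $h^{-1}$ are uniformly continuous. Fixing $\e>0$, I would select $\delta>0$ so that $d_N(u,v)<\delta$ implies $d_M(h^{-1}(u),h^{-1}(v))<\e$. A short verification using the displayed identity then shows that if $G\subset N$ is an $(n,\delta)$-generator for $g$, the set $h^{-1}(G)\subset M$ is an $(n,\e)$-generator for $f$ of equal cardinality. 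Taking $G$ of minimal size gives the termwise bound $g_{f,\e}(n)\leq g_{g,\delta}(n)$ for every $n\in\N$.

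By the definition of the order on $\OG$, this inequality descends to $[g_{f,\e}(n)]\leq[g_{g,\delta}(n)]\leq o(g)$ in $\COG$. Since $\e>0$ was arbitrary, taking the supremum over $\e$ produces $o(f)\leq o(g)$, and applying the identical reasoning to $h^{-1}$, which conjugates $g$ to $f$, gives the reverse inequality. I do not anticipate any genuine obstacle in this argument; the only step requiring slight care is the final passage to the supremum, where one must invoke the defining property of the Dedekind--MacNeille completion $\COG$ to ensure that an upper bound valid on every element of $G_f$ is also an upper bound for $\sup G_f=o(f)$.
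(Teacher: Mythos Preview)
Your argument is correct and is exactly the expected adaptation of the classical conjugacy-invariance proof for topological entropy to the setting of orders of growth. Note, however, that the present paper does not supply its own proof of this theorem: it is quoted in the preliminaries as a result of Correa--Pujals from \cite{CoPu21}, so there is no in-paper proof to compare against. Your write-up would serve perfectly well as the omitted proof.
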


We also define the generalized entropy through the point of view of $(n,\e)$-separated. We say that $E\subset M$ is $(n,\e)$-separated if $B(x,n,\e)\cap E = \{x\}$, for all $x\in E$. We define $s_{f,\e}(n)$ as the maximal cardinality of a $(n,\e)$-separated set. Analogously, as with $g_{f,\e}(n)$, if we fix $\e>0$, then $s_{f,\e}(n)$ is a non-decreasing sequence of natural numbers. Again, for a fixed $n$, if $\e_1<\e_2$, then $s_{f,\e_1} (n) \geq s_{f, \e_2}(n)$, and therefore, $[s_{f,\e_1}(n)]\geq [s_{f,\e_2}(n)]$. If we consider the set $S_f=\{[s_{f,\e}(n)]\in \mathbb{O}:\e>0\}$, then 
$$o(f)=\sup S_f \in \overline{\mathbb{O}}. $$

For a final comment on this topic, the generalized entropy of a map can also be defined in compact subsets that may not be invariant. Given $K\subset M$, a compact subset, the definition of $g_{f,\e,K}(n)$ as the minimal number of $(n,\e)$-balls (centered at points in $K$) that are needed to cover $K$ also makes sense. With it, we can define $o(f,K)=\sup\{[g_{f,\e,K}(n)]:\e>0\}$ as the generalized entropy of $f$ in $K$.

Now, let us explain how the generalized topological entropy is related to the classical notion of topological entropy and polynomial entropy. Given a dynamical system $f$, recall that the topological entropy of $f$ is 
\[h(f) = \lim_{\e\to 0}\limsup_{n\rightarrow \infty} \frac{log(g_{f,\e}(n))}{n},\]
and the polynomial entropy of $f$ is
\[h_{pol}(f) = \lim_{\e\to 0}\limsup_{n\rightarrow \infty} \frac{log(g_{f,\e}(n))}{log(n)}.\]

We define the family of exponential orders of growth as the set $\mathbb{E}=\{[\exp(tn)]; t \in(0,\infty) \} \subset \mathbb{O}$ and the family of polynomials orders of growth as the set $\mathbb{P}=\{[n^t];t\in(0,\infty)\}$. In \cite{CoPu21}, the authors defined two natural projections, $\pi_{\mathbb{E}}:\overline{\mathbb{O}}\rightarrow [0,\infty]$ and $\pi_{\mathbb{P}}:\overline{\mathbb{O}}\rightarrow [0,\infty]$, for which the following theorem holds.

\begin{theorem}[Correa-Pujals]\label{teo122}
	Let $M$ be a compact metric space and $f : M \rightarrow M$, a continuous map. Then, $\pi_\Exp(o(f))=h(f)$ and $\pi_\Pol(o(f))=h_{pol}(f)$.
\end{theorem}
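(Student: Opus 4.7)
The plan is to first pin down the projections $\pi_{\mathbb{E}}$ and $\pi_{\mathbb{P}}$ explicitly, verify they behave well with respect to the order and the Dedekind–MacNeille completion, and only then match them to the classical numerical entropies. A natural definition, which I would use, is
\[
\pi_{\mathbb{E}}(\alpha)=\inf\{t\in[0,\infty]:\alpha\leq[\exp(tn)]\},\qquad
\pi_{\mathbb{P}}(\alpha)=\inf\{t\in[0,\infty]:\alpha\leq[n^{t}]\},
\]
with the convention $\inf\emptyset=\infty$. One first checks that these are well-defined on $\mathbb{O}$: multiplicative constants in the equivalence $\approx$ contribute only $O(1)$ to $\log(\cdot)$ and are absorbed when dividing by $n$ or $\log n$. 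The extension to $\overline{\mathbb{O}}$ is automatic from the definition via order cuts, and monotonicity ($\alpha\leq\beta\Rightarrow\pi(\alpha)\leq\pi(\beta)$) is immediate.

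Next I would prove the ``representative formula'':
\[
\pi_{\mathbb{E}}([a(n)])=\limsup_{n\to\infty}\frac{\log a(n)}{n},\qquad
\pi_{\mathbb{P}}([a(n)])=\limsup_{n\to\infty}\frac{\log a(n)}{\log n}
\]
for every $a\in\mathcal{O}$. For the $\leq$ direction, if $\limsup_n \log a(n)/n < t$ then $a(n)\leq C\exp(tn)$ for some $C$, so $[a(n)]\leq[\exp(tn)]$. For the $\geq$ direction, if $[a(n)]\leq[\exp(tn)]$ then $a(n)\leq C\exp(tn)$, hence $\log a(n)/n\leq t+\log C/n$, so the $\limsup$ is $\leq t$. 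The polynomial case is identical with $\log n$ in place of $n$.

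Finally, I would combine these with the definition $o(f)=\sup G_f$ and the fact that $G_f=\{[g_{f,\e}(n)]:\e>0\}$ is a monotone family (increasing as $\e\to 0$). Since $\pi_{\mathbb{E}}$ is monotone and defined by an order cut, it commutes with monotone suprema in $\overline{\mathbb{O}}$:
\[
\pi_{\mathbb{E}}(o(f))=\pi_{\mathbb{E}}(\sup G_f)=\sup_{\e>0}\pi_{\mathbb{E}}([g_{f,\e}(n)])
=\sup_{\e>0}\limsup_{n\to\infty}\frac{\log g_{f,\e}(n)}{n}=\lim_{\e\to 0}\limsup_{n}\frac{\log g_{f,\e}(n)}{n}=h(f),
\]
and the same argument with $\mathbb{P}$ gives $\pi_{\mathbb{P}}(o(f))=h_{\mathrm{pol}}(f)$.

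The main obstacle is the commutation $\pi_{\mathbb{E}}(\sup G_f)=\sup \pi_{\mathbb{E}}(G_f)$, which is the only place the completion $\overline{\mathbb{O}}$ really enters, since $\sup G_f$ may lie in $\overline{\mathbb{O}}\setminus\mathbb{O}$. The inequality $\sup\pi_{\mathbb{E}}(G_f)\leq\pi_{\mathbb{E}}(\sup G_f)$ is pure monotonicity. The reverse requires checking that if $\sup_\e \pi_{\mathbb{E}}([g_{f,\e}(n)])<t$ then $\sup G_f\leq[\exp(tn)]$: but for each $\e$ the element $[g_{f,\e}(n)]$ is bounded by $[\exp(tn)]$ by the representative formula applied with any $t'\in(\sup_\e \pi_{\mathbb{E}},t)$, so $[\exp(tn)]$ is an upper bound of $G_f$ in $\mathbb{O}$, and therefore an upper bound of $\sup G_f$ in $\overline{\mathbb{O}}$ by the universal property of the Dedekind–MacNeille completion. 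This closes the argument and simultaneously justifies the heuristic notation $o(f)=``\lim_{\e\to 0}"[g_{f,\e}(n)]$.
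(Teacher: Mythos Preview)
The paper does not supply its own proof of this statement: Theorem~\ref{teo122} is quoted in the preliminaries as a result of Correa--Pujals from \cite{CoPu21}, with the projections $\pi_{\mathbb{E}}$ and $\pi_{\mathbb{P}}$ only referred to as ``two natural projections'' defined there. So there is no in-paper proof to compare against.

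That said, your argument is correct and is the natural one. Your choice of $\pi_{\mathbb{E}}(\alpha)=\inf\{t:\alpha\le[\exp(tn)]\}$ (and the polynomial analogue) is exactly the kind of order-cut definition that extends cleanly from $\mathbb{O}$ to $\overline{\mathbb{O}}$; the representative formula $\pi_{\mathbb{E}}([a(n)])=\limsup_n \log a(n)/n$ is verified by the two-sided estimate you gave; and the only genuinely nontrivial step, the commutation $\pi_{\mathbb{E}}(\sup G_f)=\sup_{\e}\pi_{\mathbb{E}}([g_{f,\e}(n)])$, you handle correctly by observing that any $[\exp(tn)]$ dominating every $[g_{f,\e}(n)]$ in $\mathbb{O}$ remains an upper bound for $\sup G_f$ in $\overline{\mathbb{O}}$. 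One small remark: monotonicity of $G_f$ in $\e$ is convenient for writing $\sup_\e=\lim_{\e\to 0}$ at the end, but your commutation argument does not actually need it, since it works for an arbitrary family bounded above by $[\exp(tn)]$.
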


\subsection{Infinitely re-normalizable systems of the disk}\label{SubSecKS}
Let us begin with the definition of mildly dissipative systems. Throughout this subsection, we are going to work with maps in $Emb^r(\D^2)$, the space of $C^r$ embeddings of the disk $f:\D^2\to f(\D^2)\subset interior(\D^2)$. A map $f$ is dissipative if $|det(Df_x)|<1$ for every $x\in \D^2$. When $f$ is dissipative, every ergodic measure has at least one negative Lyapunov exponent. In this scenario, almost every point has a Pesin stable manifold $W^s(x)$ and we call $W^s_\D(x)$ the connected component in $\D^2$, which contains $x$. We say that $f$ is mildly dissipative if for every ergodic measure that is not supported on a hyperbolic sink, and for almost every point, $W^s_\D(x)$ splits the disk in two. 

Given $f\in Emb^r(\D^2)$, we say that it is infinitely re-normalizable if there exists a sequence of families of disks
$\mathcal{D}^m=\{D^m_1,\cdots, D^m_{l_m}\}$ such that
\begin{enumerate}
\item Every $D^{m+1}_i$ is contained in some $D^m_j$. 
\item For every $D^m_i$, there exists an integer $k^m_i$ such that $f^{k^m_i}(D^m_i)\subset D^m_i$.
\item For a fixed $m$, the discs $f^j(D^m_i)$ for $1\leq i \leq l_m$ and $0\leq j< k^m_i$ are pairwise disjoint. 
\item The times $k^m_i$ tends to infinity. 
\item There is no periodic point in the border of any disk $D^m_i$. 
\end{enumerate}

The standard geometry property is that the diameters of $D^m_i$ tend to $0$ uniformly.

For each $D^m_i$, let us consider $W^m_i= D^m_i\cup f(D^m_i)\cup \cdots f^{k^m_i-1}(D^m_i)$, and for each $m$, we define $V^m=W^m_1\cup\cdots \cup W^m_{l_m}$. In \cite{CrPuTr20}, the authors prove the following theorem.

\begin{theorem}[Crovisier-Pujals-Tresser]
 Given $f\in Emb^r(\D^2)$ mildly dissipative with zero topological entropy, if it is in the border of chaos, then it is infinitely re-normalizable. Moreover, the non-wandering set of $f$ outside $V^m$ consists of periodic points with a period smaller or equal to $\max\{k^m_i\}$.
\end{theorem}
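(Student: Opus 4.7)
The plan is to construct the nested families $\mathcal{D}^m$ inductively, exploiting the interplay between mild dissipation (which gives stable manifolds that cleanly split $\D^2$) and the border-of-chaos hypothesis (which forces the presence of periodic orbits of arbitrarily high period, organized in a renormalizable fashion). The intuition is that any horseshoe appearing under a perturbation of $f$ must, in the limit, collapse onto a nested structure of invariant disks, since $f$ itself cannot carry the horseshoe's complexity.

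First I would analyze $\Omega(f)$ using $h(f)=0$ combined with mild dissipation. For an ergodic measure $\mu$ not supported on a sink, mild dissipation guarantees that the Pesin stable manifold $W^s_\D(x)$ splits the disk for $\mu$-a.e.\ $x$; a Katok-style variational argument, together with the zero entropy hypothesis, rules out non-trivial hyperbolic invariant sets. I would leverage this to show that the support of every ergodic measure of $f$ is a periodic orbit, reducing the problem to understanding how the periodic orbits of $f$ are organized inside $\D^2$.

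Next I would exploit the border-of-chaos hypothesis. Choose $f_n\to f$ with $h(f_n)>0$; each $f_n$ carries a horseshoe. As $n\to\infty$, these horseshoes must degenerate, and the only mechanism compatible with the limit having zero entropy is a cascade of period-doubling-type bifurcations accumulating on a finite collection of periodic orbits of $f$ whose periods tend to infinity. I would then use the separating property of the stable manifolds of these periodic points, together with their unstable manifolds, to cut $\D^2$ into finitely many closed topological disks that are mapped into themselves by a definite power of $f$; these are the candidates for the $D^m_i$, with $k^m_i$ being the associated return time, and the pairwise disjointness of the iterates $f^j(D^m_i)$ for $0\leq j<k^m_i$ would come from the fact that the stable/unstable separatrices are pairwise disjoint outside the periodic orbit itself.

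The hard part, and really the technical core of the argument, is to verify all the geometric conditions simultaneously: that the $D^m_i$ can be chosen with no periodic point on their boundary (a small transverse adjustment using the Kupka-Smale-like genericity hidden in the hypotheses), that the family at level $m+1$ nests inside the family at level $m$, and that the construction can be iterated indefinitely. The $\max k^m_i\to\infty$ condition should follow automatically from the border-of-chaos assumption, since otherwise $f$ would lie in the interior of the zero-entropy region. Finally, for the moreover part, I would argue that any point recurrent under $f$ and lying outside $V^m$ cannot be captured by any deeper renormalization level, so mild dissipation plus zero entropy force it to be a hyperbolic periodic point of period at most $\max\{k^m_i\}$; otherwise its orbit would generate a new trapping region of higher return time that we missed, contradicting the maximality of the construction at level $m$.
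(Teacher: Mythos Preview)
The paper does not prove this theorem at all: it is quoted in the preliminaries as a result of Crovisier, Pujals and Tresser, with the sentence ``In \cite{CrPuTr20}, the authors prove the following theorem,'' and is used as a black box in the proofs of Corollaries~\ref{CorKS} and~\ref{CorCont}. There is therefore no proof in the paper to compare your proposal against.

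As for the proposal itself, it is a plausible high-level outline of the strategy in \cite{CrPuTr20}, but it is far from a proof. The actual argument in that paper is substantial: showing that every ergodic measure is supported on a periodic orbit requires a careful analysis of the stable lamination given by mild dissipation, and the construction of the nested trapping disks $D^m_i$ relies on a structure theorem for the saddle points and their separatrices (in particular, a ``no cycle'' and filtration argument specific to the mildly dissipative setting). Your step ``use the separating property of the stable manifolds \ldots\ to cut $\D^2$ into finitely many closed topological disks'' hides essentially all of the work; nothing in your sketch explains why the pieces obtained are topological disks, why a definite iterate maps each into its own interior, or why the return times are unbounded rather than stabilizing. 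Similarly, invoking ``Kupka--Smale--like genericity hidden in the hypotheses'' to avoid periodic points on the boundaries is not justified, since no genericity is assumed on $f$. If your goal is to understand this result, you should consult \cite{CrPuTr20} directly; for the purposes of the present paper, the theorem is simply cited and no proof is expected.
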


\subsection{Coding of orbits}\label{SubCod}

Now, we proceed to extend the coding of the orbits done in \cite{HaRo19} to dynamical systems whose non-wandering set is finite, and we translate it to the language of generalized entropy simultaneously. Let us consider $M$ a compact metric space and $f:M\to M$ a homeomorphism such that $\Omega(f)=\{p_1,\cdots, p_k\}$. Let $\F$ be a finite family of non-empty subsets of $M\setminus \Omega(f)$. We denote by $\cup \F$ the union of all the elements of $\F$ and by $\infty_{\F}$ the complement of $\cup \F$. Let us fix a positive integer $n$ and consider $\underline{x}=(x_0,\cdots,x_{n-1})$ a finite sequence of points in $M$ and $\underline{w}=(w_0,\cdots,w_{n-1})$ a finite sequence of elements of $\F\cup \{\infty_{\F}\}$. We say that $\underline{w}$ is a coding of $\underline{x}$, relative to $\F$, if for every $i=0,\cdots, n-1$, we have $x_i\in w_i$. Whenever the family $\F$ is fixed, we simplify the notation by using $\infty$ instead of $\infty_{\F}$. Note that if the sets of $\F$ are not disjoint, we can have more than one coding for a given sequence. We denote the set of all the codings of all orbits $(x,f(x),\cdots, f^{n-1}(x))$ of length $n$ by $\A_{n}(f,\F)$. We define the sequence $c_{f,\F}(n)=\# \A_{n}(f,\F)$, and it is easy to see that $c_{f,\F}(n)\in \mathcal{O}$.

\begin{example}\label{ex1}
	 Let $T:\mathbb{R}^2\rightarrow \mathbb{R}^2$ be the translation $(x,y)\mapsto (x+1, y)$. To fit our setting, we consider the compactification of the plane by one point ($\infty$), obtaining $M=S^2$ and a map $f:S^2\rightarrow S^2$ such that $\Omega (f)=\{\infty\}$. As we have to work with the subsets of $M\setminus \{\infty\}$ for our coding, we may as well keep working on $\mathbb{R}^2$. Let $Y$ be a compact subset of $\mathbb{R}^2$ and let us suppose that its diameter is less than $1$. Then, we can see that $c_{f,\{Y\}}(n)=n$. Indeed, the elements of $\A_n(f,\{Y\})$ are exactly all the words of the form $(\infty,\cdots, \infty,Y,\infty,\cdots,\infty)$, and therefore, it contains $n$ elements. In case $Y$ has a diameter greater than one, it can easily be observed that the equality $[c_{f,\{Y\}}(n)]=[n]$ still holds.

\end{example}

\begin{example}[Reeb's flow/Brouwer's counter-example]\label{ex2}
Consider the map $H:\mathbb{R}^2\rightarrow \mathbb{R}^2$, the time-one map of Reeb's flow (see figure \ref{reeb}). We recall that this map is a classical example of a Brouwer homeomorphism that is not conjugated to a translation. Again, we can compactify $\mathbb{R}^2$ to the sphere $S^2$ and extend $H$ to the map $h:S^2\to S^2$ whose wandering set is only the fixed point at infinity. Again, we shall keep working in $\mathbb{R}^2$. Let $Y_1,Y_2$ be two disks, not containing the origin, whose interiors meet the lines $y=1$ and $y=-1$, respectively. To simplify the computation, we assume the disks are small enough for them to not intersect any of their respective forward images. Note that if $Y_1$ and $Y_2$ are chosen such that $f^n(Y_1)\cap Y_2\neq \emptyset$ for all $n$, then the elements of $\A_n(f,\{Y_1,Y_2\})$ are exactly all the words of the form $$(\infty,\cdots, \infty,Y_1,\infty,\cdots,\infty, Y_2,\infty,\cdots,\infty).$$  
Thus, $c_{f,\{Y_1,Y_2\}}(n)=n(n-1)/2$ and $[c_{h,\{Y_1,Y_2\}}(n)]=[n^2]$.		
\end{example}

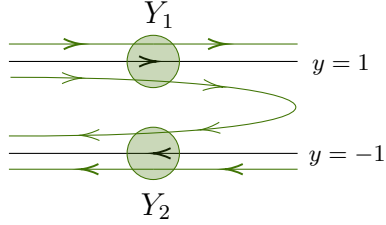
\begin{figure}[h!]
	\begin{center}
		\begin{tikzpicture}[x=0.75pt,y=0.75pt,yscale=-0.8,xscale=0.8]
			
			\draw    (30.5,61) -- (121.5,61) ;
			\draw    (123.5,61) -- (212.5,61) ;
			\draw [shift={(123.5,61)}, rotate = 180] [color={rgb, 255:red, 0; green, 0; blue, 0 }  ][line width=0.75]    (10.93,-3.29) .. controls (6.95,-1.4) and (3.31,-0.3) .. (0,0) .. controls (3.31,0.3) and (6.95,1.4) .. (10.93,3.29)   ;
			\draw    (30.5,119) -- (121.5,119) ;
			\draw    (123.5,119) -- (212.5,119) ;
			\draw [shift={(121.5,119)}, rotate = 0] [color={rgb, 255:red, 0; green, 0; blue, 0 }  ][line width=0.75]    (10.93,-3.29) .. controls (6.95,-1.4) and (3.31,-0.3) .. (0,0) .. controls (3.31,0.3) and (6.95,1.4) .. (10.93,3.29)   ;
			\draw [color={rgb, 255:red, 65; green, 117; blue, 5 }  ,draw opacity=1 ]   (30.5,50) -- (121.5,50) ;
			\draw [shift={(76,50)}, rotate = 180] [color={rgb, 255:red, 65; green, 117; blue, 5 }  ,draw opacity=1 ][line width=0.75]    (10.93,-3.29) .. controls (6.95,-1.4) and (3.31,-0.3) .. (0,0) .. controls (3.31,0.3) and (6.95,1.4) .. (10.93,3.29)   ;
			\draw [color={rgb, 255:red, 65; green, 117; blue, 5 }  ,draw opacity=1 ]   (121.5,50) -- (212.5,50) ;
			\draw [shift={(167,50)}, rotate = 180] [color={rgb, 255:red, 65; green, 117; blue, 5 }  ,draw opacity=1 ][line width=0.75]    (10.93,-3.29) .. controls (6.95,-1.4) and (3.31,-0.3) .. (0,0) .. controls (3.31,0.3) and (6.95,1.4) .. (10.93,3.29)   ;
			\draw [color={rgb, 255:red, 65; green, 117; blue, 5 }  ,draw opacity=1 ]   (30.5,129) -- (121.5,129) ;
			\draw [shift={(76,129)}, rotate = 0] [color={rgb, 255:red, 65; green, 117; blue, 5 }  ,draw opacity=1 ][line width=0.75]    (10.93,-3.29) .. controls (6.95,-1.4) and (3.31,-0.3) .. (0,0) .. controls (3.31,0.3) and (6.95,1.4) .. (10.93,3.29)   ;
			\draw [color={rgb, 255:red, 65; green, 117; blue, 5 }  ,draw opacity=1 ]   (121.5,129) -- (212.5,129) ;
			\draw [shift={(167,129)}, rotate = 0] [color={rgb, 255:red, 65; green, 117; blue, 5 }  ,draw opacity=1 ][line width=0.75]    (10.93,-3.29) .. controls (6.95,-1.4) and (3.31,-0.3) .. (0,0) .. controls (3.31,0.3) and (6.95,1.4) .. (10.93,3.29)   ;
			\draw [color={rgb, 255:red, 65; green, 117; blue, 5 }  ,draw opacity=1 ]   (31.5,71) .. controls (271.5,71) and (271.5,109) .. (30.5,108) ;
			\draw  [color={rgb, 255:red, 65; green, 117; blue, 5 }  ,draw opacity=1 ] (64,68) .. controls (68.17,69.94) and (72.33,71.11) .. (76.5,71.5) .. controls (72.33,71.89) and (68.17,73.06) .. (64,75) ;
			\draw  [color={rgb, 255:red, 65; green, 117; blue, 5 }  ,draw opacity=1 ] (87.56,110.89) .. controls (83.36,109.02) and (79.18,107.93) .. (75,107.61) .. controls (79.16,107.15) and (83.3,105.9) .. (87.44,103.89) ;
			\draw  [color={rgb, 255:red, 65; green, 117; blue, 5 }  ,draw opacity=1 ] (153.4,72.37) .. controls (157.34,74.73) and (161.36,76.33) .. (165.47,77.15) .. controls (161.28,77.11) and (157.02,77.83) .. (152.67,79.33) ;
			\draw  [color={rgb, 255:red, 65; green, 117; blue, 5 }  ,draw opacity=1 ] (163.83,106.33) .. controls (159.48,104.83) and (155.22,104.11) .. (151.03,104.15) .. controls (155.14,103.33) and (159.16,101.73) .. (163.1,99.37) ;
			\draw  [color={rgb, 255:red, 65; green, 117; blue, 5 }  ,draw opacity=1 ][fill={rgb, 255:red, 65; green, 117; blue, 5 }  ,fill opacity=0.28 ] (105,61) .. controls (105,51.89) and (112.39,44.5) .. (121.5,44.5) .. controls (130.61,44.5) and (138,51.89) .. (138,61) .. controls (138,70.11) and (130.61,77.5) .. (121.5,77.5) .. controls (112.39,77.5) and (105,70.11) .. (105,61) -- cycle ;
			\draw  [color={rgb, 255:red, 65; green, 117; blue, 5 }  ,draw opacity=1 ][fill={rgb, 255:red, 65; green, 117; blue, 5 }  ,fill opacity=0.28 ] (105,119) .. controls (105,109.89) and (112.39,102.5) .. (121.5,102.5) .. controls (130.61,102.5) and (138,109.89) .. (138,119) .. controls (138,128.11) and (130.61,135.5) .. (121.5,135.5) .. controls (112.39,135.5) and (105,128.11) .. (105,119) -- cycle ;
			
			\draw (220,55.4) node [anchor=north west][inner sep=0.75pt]  [font=\scriptsize]  {$y=1$};
			\draw (218,112.4) node [anchor=north west][inner sep=0.75pt]  [font=\scriptsize]  {$y=-1$};
			\draw (114,21.4) node [anchor=north west][inner sep=0.75pt]    {$Y_{1}$};
			\draw (112,142.4) node [anchor=north west][inner sep=0.75pt]    {$Y_{2}$};
		\end{tikzpicture}
	\end{center}
	\caption{Reeb's flow/Brouwer's counter-example.} \label{reeb}
\end{figure}

We say that a set $Y$ is wandering if $f^n(Y)\cap Y =\emptyset$ for every $n\geq 1$. We say that $Y$ is a compact neighborhood if it is compact and is the closure of an open set. We say that the subsets $Y_1,\cdots, Y_L$ of $M\setminus \{\Omega(f)\}$ are mutually singular if, for every $N>0$, there exists a point $x$ and times $n_1,\cdots, n_L$ such that $f^{n_i}(x)\in Y_i$ for every $i=1,\cdots, L$, and $|n_i -n_j|>N$ for every $i\neq j$. Note that in the previous example, sets $Y_1,Y_2$ are mutually singular.	

Let us call $\Sigma$ a family of finite families of wandering compact neighborhoods that are mutually singular. Given $\delta>0$, we define $\Sigma_\delta$ as the subset of $\Sigma$ formed by every family whose every element has a diameter smaller than $\delta$. Now, we present a more precise statement of theorem \ref{TeoCodB}.

\begin{maintheorem}\label{TeoCod}
Let $M$ be a compact metric space and $f:M\rightarrow M$ a homeomorphism such that $\Omega(f)$ is finite. Then, 
\[o(f)=\sup\{[c_{f,\F}(n)]\in \OG: \F\in \Sigma\}.\]
In addition, the equation also holds if we switch $\Sigma$ by $\Sigma_\delta$. 
\end{maintheorem}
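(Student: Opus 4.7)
The plan is to establish both inequalities: $o(f) \geq [c_{f,\F}(n)]$ for every $\F \in \Sigma$, and $o(f) \leq \sup\{[c_{f,\F}(n)] : \F \in \Sigma_\delta\}$ for every $\delta > 0$. Together these imply both supremum identities in the statement, since $\Sigma_\delta \subseteq \Sigma$.

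For the lower bound, fix $\F = \{Y_1, \ldots, Y_L\} \in \Sigma$. The crucial structural fact is that each wandering set $Y_j$ is visited at most once by any orbit, so a coding is determined by the subset $S \subseteq \F$ that is visited together with the (distinct) visit times, and mutual singularity guarantees that many such combinations are actually realized by orbits. To produce $(n,\e)$-separated witnesses from codings, I would replace $\F$ by a family $\tilde\F = \{\tilde Y_1, \ldots, \tilde Y_L\}$ of pairwise disjoint compact neighborhoods with $\tilde Y_j \supseteq Y_j$, whose $\e$-neighborhoods remain pairwise disjoint and whose members are still wandering; a compactness argument based on $\F \in \Sigma$ shows that $\tilde\F \in \Sigma$ and that $c_{f,\F}(n)$ and $c_{f,\tilde\F}(n)$ agree up to a multiplicative constant (essentially the maximal overlap multiplicity of $\F$). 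For each coding $\underline{w} \in \A_n(f,\tilde\F)$, I would pick a witness $x_{\underline{w}}$ realizing it; disjointness makes $\underline{w} \mapsto x_{\underline{w}}$ injective, and if two codings differ at position $i$ with both entries in $\tilde\F$, their witnesses lie in sets at distance $>2\e$. The delicate case where one entry is $\tilde Y_j$ and the other is $\infty_{\tilde\F}$ is handled by selecting witnesses from a slightly shrunken subfamily $\tilde\F^-$, forcing $\infty$-witnesses to stay at distance at least $\e$ from every $\tilde Y_j$. This yields $s_{f,\e}(n)\gtrsim c_{f,\F}(n)$ and hence $o(f) \geq [c_{f,\F}(n)]$ in $\COG$.

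For the upper bound, fix $\e > 0$ and set $\delta = \e/4$. Choose a small forward-invariant neighborhood $U$ of $\Omega(f)$ covered by finitely many $\e$-balls; since $\Omega(f)$ is finite and the dynamics in $U$ is essentially periodic, the number of distinct entry/exit patterns of an orbit of length $n$ into $U$ is polynomial in $n$ and contributes only a multiplicative factor to $g_{f,\e}(n)$. Cover $M \setminus U$ by a finite family $\mathcal Z = \{Z_1, \ldots, Z_N\}$ of wandering compact neighborhoods of diameter $< \delta$. Two points sharing the same $\mathcal Z$-coding and the same $U$-entry pattern lie in a common $(n,\e)$-ball, giving $g_{f,\e}(n) \leq C(\e) \cdot c_{f,\mathcal Z}(n)$. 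The final step is to replace $\mathcal Z$ by a mutually singular subfamily $\F \in \Sigma_\delta$ with $c_{f,\mathcal Z}(n) \leq C' \cdot c_{f,\F}(n)$: if two elements of $\mathcal Z$ fail to be mutually singular, the times of visit to them along any orbit are uniformly bounded apart, which strictly constrains the combinatorics of codings using both; an induction on $|\mathcal Z|$ then lets me discard non-mutually-singular elements without losing the asymptotic growth rate.

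The main obstacle I foresee is this last reduction — producing a mutually singular subfamily whose coding count dominates that of an arbitrary covering $\mathcal Z$. In the Hauseux–Le Roux setting of a single fixed point the reduction is essentially automatic from local structure, but with finite non-wandering set I must organize $\mathcal Z$ according to the heteroclinic partial order on $\Omega(f)$ and show that only the ``long-transit'' codings, precisely those corresponding to mutually singular combinations, dominate the count. A subsidiary subtlety is ensuring that the enlargement/shrinking modifications used in the lower bound stay within $\Sigma_\delta$, which is handled by choosing modification radii comfortably smaller than $\delta$.
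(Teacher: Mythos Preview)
Your lower-bound sketch is in the right spirit but the ``shrinking'' device for the $\infty$-versus-$\tilde Y_j$ case does not work as written: having the witness of $\underline w$ land in a shrunken $\tilde Y_j^-$ does nothing to push the witness of $\underline w'$ (whose $i$-th letter is $\infty_{\tilde\F}$) away from $\tilde Y_j$, since $\infty_{\tilde\F}$ only means that point lies outside $\cup\tilde\F$, not outside an $\e$-neighborhood of it. The paper resolves this without any shrinking: it partitions $\A_n(f,\F)$ according to the \emph{set} of letters occurring in a word, and observes that if $\underline w$ and $\underline w'$ use the same letter set and $w_i=\infty$ while $w'_i=Y_j$, then $Y_j$ must occur at some other position of $\underline w$, so the wandering of a fixed neighborhood $U_j\supset Y_j$ forces $f^i(x_{\underline w})\notin U_j$. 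This costs only a factor $2^L$ and gives $c_{f,\F}(n)\le 2^L s_{f,\e}(n)$ directly.

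The substantive gap is in your upper bound, and it occurs \emph{before} the reduction you flag as the main obstacle. First, there is no small forward-invariant neighborhood of $\Omega(f)$: any source in $\Omega(f)$ makes this impossible. Second, and more seriously, your claim that the ``entry/exit patterns into $U$'' contribute only a polynomial multiplicative factor is both unjustified and insufficient. Insufficient, because a bound $s_{f,\e}(n)\le n^k c_{f,\mathcal Z}(n)$ yields only $[s_{f,\e}(n)]\le [n^k c_{f,\mathcal Z}(n)]$, and since $c_{f,\mathcal Z}(n)$ itself grows at most polynomially this does \emph{not} give $[s_{f,\e}(n)]\le[c_{f,\mathcal Z}(n)]$; you need a \emph{constant} factor. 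Unjustified, because a priori an $\infty$-block of length $m$ in a $\mathcal Z$-coding could correspond to many different sequences of components $V_j$ of $U$, and you give no mechanism bounding this. This is exactly the technical heart of the paper's argument: one covers $M$ by the wandering pieces $Y_1,\dots,Y_L$ together with one small ball $V_j$ per point of $\Omega(f)$, builds the finite directed graph $G$ on vertices $\{Y_i,V_j\}$ recording allowed one-step transitions, and shows that the map ``delete repeated edges'' from length-$(n-1)$ paths in $G$ to trails in $G$ is \emph{injective when restricted to paths with a fixed $\F$-coding}. Since the number of trails in $G$ is a constant $B$ independent of $n$, this gives $s_{f,\e}(n)\le B\,c_{f,\F}(n)$. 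Nothing in your outline supplies this injectivity (which uses that from any vertex there is at most one outgoing edge landing in a $V_j$). By contrast, the reduction you identify as the main obstacle---passing to a mutually singular subfamily---is comparatively routine and is done in the paper essentially along the lines you sketch (first reduce to disjoint subfamilies, then drop a set whenever the family is not mutually singular, by a backward induction).
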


 \section{Proof of theorem \ref{TeoMS}}\label{SecTeoMS}

We use theorem \ref{TeoCod} to prove theorem \ref{TeoMS}. However, the proof of theorem \ref{TeoCod} is more technical and since the proof of theorem \ref{TeoMS} has more geometrical components, we choose to prove \ref{TeoMS} first and show the proof of theorem \ref{TeoCod} later. At the end of this section, we provide a proof of corollaries \ref{CorRig}, \ref{CorKS}, \ref{CorCont} and \ref{CorDimN}.

Let us consider $S$ a compact surface  and $f:S\to S$ a Morse-Smale diffeomorphism. As we are in dimension two, we have three possibilities for the hyperbolic periodic points. They are either sinks, sources or saddles of index 1. Using theorem \ref{TeoCod}, we need to first compute $[c_{f,\F}(n)]$ for finite families $\F$ of disjoint, mutually singular compact neighborhoods, and then take the supremum over such families. Our following lemma tells the location of the sets of any $\F\in \Sigma$. 

\begin{lemma}\label{LemLoc}
Given $\F = \{Y_1,\cdots, Y_L\}\in \Sigma$, there exist $\theta(p_1),\cdots, \theta(p_{L})$ orbits of periodic points such that $Y_i\cap W^s(\theta(p_i))\neq \emptyset$ and $Y_{i+1}\cap W^u(\theta(p_i))\neq \emptyset$. Moreover, if the diameter of the elements of $\F$ is small enough, then $W^u(\theta(p_i))\cap W^s(\theta(p_{i+1}))\neq \emptyset$. 
\end{lemma}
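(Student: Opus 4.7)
The plan is to extract witness orbits from the mutual singularity hypothesis and then exploit the Morse--Smale structure---specifically the partition of $S$ by stable (and by unstable) manifolds of periodic orbits, together with the inclination lemma and transversality---to locate the $\theta(p_i)$. For each positive integer $N$, mutual singularity provides a point $x^{(N)} \in S$ and times $n_1^{(N)}, \ldots, n_L^{(N)}$ with $f^{n_i^{(N)}}(x^{(N)}) \in Y_i$ and pairwise differences greater than $N$. Since only finitely many orderings of these indices are possible, I pass to a subsequence that fixes the ordering and, relabeling the $Y_i$'s, assume $n_1^{(N)} < \cdots < n_L^{(N)}$. Setting $y_i^{(N)} := f^{n_i^{(N)}}(x^{(N)})$ and $m_i^{(N)} := n_{i+1}^{(N)} - n_i^{(N)}$, compactness of each $Y_i$ combined with a diagonal extraction yields $y_i^{(N)} \to y_i \in Y_i$ with $m_i^{(N)} \to \infty$.

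Since each $y_i$ is wandering (the $Y_i$ are disjoint from the finite set $\Omega(f)$), the Morse--Smale hypothesis places $y_i$ in a unique stable manifold $W^s(\theta(p_i))$, immediately yielding $Y_i \cap W^s(\theta(p_i)) \neq \emptyset$. To produce a point of $Y_{i+1}$ in $W^u(\theta(p_i))$, I apply the $\lambda$-lemma at $\theta(p_i)$. Choose a small neighborhood $U$ of $\theta(p_i)$; since $y_i \in W^s(\theta(p_i))$, there is $T$ with $f^T(y_i) \in U$ and hence $f^T(y_i^{(N)}) \in U$ for all large $N$. On the other hand, $y_{i+1}^{(N)} = f^{m_i^{(N)}}(y_i^{(N)}) \in Y_{i+1}$ is bounded away from $\Omega(f)$, so the orbit of $y_i^{(N)}$ must exit $U$ before time $m_i^{(N)}$. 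The inclination lemma forces such escape orbits to accumulate on $W^u(\theta(p_i))$, so passing to the limit in $N$ places $y_{i+1}$ in $\overline{W^u(\theta(p_i))}$. Because $Y_{i+1}$ is the closure of an open set, nearby points of $W^u(\theta(p_i))$ itself can be arranged to lie in $\operatorname{int}(Y_{i+1})$, giving $Y_{i+1} \cap W^u(\theta(p_i)) \neq \emptyset$.

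For the moreover statement, note that the same limit point $y_{i+1}$ witnesses both $y_{i+1} \in \overline{W^u(\theta(p_i))}$ and, by applying the previous step to index $i+1$, $y_{i+1} \in W^s(\theta(p_{i+1}))$. When the diameter of $Y_{i+1}$ is small enough, a local analysis in a coordinate chart around $y_{i+1}$ is available: the Morse--Smale transversality $W^u \pitchfork W^s$ of periodic orbits, combined with the $\lambda$-lemma accumulation of $W^u(\theta(p_i))$ on the (possibly downstream) unstable leaf through $y_{i+1}$, produces an actual point of $W^u(\theta(p_i)) \cap W^s(\theta(p_{i+1}))$. The main obstacle I anticipate is precisely this ``closure-vs-manifold'' step: the Smale boundary of $W^u(\theta(p_i))$ is generally a union of downstream unstable manifolds $W^u(\theta(q))$, so one must use either the open interior of $Y_{i+1}$ or its small diameter, together with transversality of $W^u(\theta(q)) \pitchfork W^s(\theta(p_{i+1}))$, to guarantee that nearby leaves of $W^u(\theta(p_i))$ themselves cross $W^s(\theta(p_{i+1}))$ and do so within the controlled neighborhood.
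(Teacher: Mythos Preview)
Your extraction of long orbit segments and their limit points $y_i$ is essentially the paper's argument, written out with more care; the paper simply says the segments accumulate on an invariant set containing some periodic orbit $\theta(p_i)$, with endpoints accumulating on $W^s(\theta(p_i))$ and $W^u(\theta(p_i))$ respectively.

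The substantive difference is how the ``closure-vs-manifold'' obstacle you correctly flag gets resolved. The paper sidesteps your local transversality analysis entirely with a global argument: if $W^u(\theta(p))$ accumulated on $W^s(\theta(q))$ without meeting it, then in a Hartman--Grobman chart near $\theta(q)$ one sees that points of $W^s(\theta(q))$ near the accumulation would be non-wandering, contradicting the finiteness of $\Omega(f)$. Since there are only finitely many pairs of periodic orbits, this yields a uniform $\delta>0$ with the property that $\operatorname{dist}\bigl(W^u(\theta(p)),W^s(\theta(q))\bigr)\le\delta$ forces an actual intersection. Taking $\operatorname{diam}(Y_{i+1})\le\delta$ then gives the ``moreover'' directly from the first part, with no need to track whether $y_{i+1}$ lands in $\operatorname{int}(Y_{i+1})$ or on a downstream unstable leaf. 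This is both shorter and more robust than the local $\lambda$-lemma picture you sketch.

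One minor quibble: the $\lambda$-lemma as usually stated concerns transverse disks, not single orbits. What you actually need (and what the paper implicitly uses) is just the local hyperbolic picture: an orbit entering a linearizing neighborhood close to $W^s_{\mathrm{loc}}$ and remaining there a long time must exit close to $W^u_{\mathrm{loc}}$.
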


The following figure represents the above-mentioned lemma. 

\input{fig04}

\begin{proof}
The property of $\F$ to be mutually singular creates long segments of orbits whose endpoints belong to $Y_i$ and $Y_{i+1}$. The accumulation of such segments contains an invariant set; therefore, it must contain a periodic orbit $\theta(p_i)$. Now, the endpoints in  $Y_i$ must accumulate over the stable manifold of $\theta(p_i)$, and the endpoint in $Y_{i+1}$ must accumulate over the unstable manifold of $\theta(p_i)$. As all the elements of $\F$ are compact, we conclude the first part of the lemma. 

Now, we shall prove the second part of the lemma. Note that if the unstable manifold of a periodic saddle $\theta(p)$ does not intersect the stable manifold of $\theta(q)$, yet it accumulates it, then any point in said stable manifold would be non-wandering. This can be seen with the help of Hartman-Grobman's theorem. As $f$ is Morse-Smale, this can not happen. We thus conclude the existence of a positive $\delta$ such that if $dist(W^u(\theta(p)), W^s(\theta(q))) \leq \delta$, then $W^u(\theta(p))\cap W^s(\theta(q))\neq \emptyset$. As we can choose the elements of $\F$ with an arbitrarily small diameter, we conclude the proof of the lemma.   
\end{proof}

Given $\F\in \Sigma$, take $L=\#\F$ and consider the periodic orbits $\theta(p_1),\cdots, \theta(p_L)$ from lemma \ref{LemLoc}. Note that any point in the intersection between $Y_1$ and the stable manifold of $\theta(p_1)$ must also belong to the unstable manifold of some periodic orbit $\theta(p_0)$. Therefore, $\theta(p_0),\theta(p_1),\cdots, \theta(p_L)$ induces a path of length $L$ in the graph constructed in the introduction, particularly $L\leq L(f)$.

\begin{lemma}\label{LemUppBou} 
Let us consider $M$ a compact metric space and $f:M\to M$ a homeomorphism. Given $\F\in \Sigma$, take $L=\#\F$. Then, $[c_{f,\F}(n)]\leq [n^{L}]$. 
\end{lemma}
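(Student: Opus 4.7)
The approach rests entirely on the wandering hypothesis built into the definition of $\Sigma$: each $Y_j \in \F$ satisfies $f^n(Y_j)\cap Y_j = \emptyset$ for all $n\geq 1$. My first step is to observe that this forces each $Y_j$ to appear at most once in the coding of any single orbit segment of length $n$. Indeed, if $f^i(x), f^k(x) \in Y_j$ for some $0\leq i < k \leq n-1$, then setting $y = f^i(x) \in Y_j$ one has $f^{k-i}(y) \in f^{k-i}(Y_j)\cap Y_j$, contradicting wandering. Hence in every coding $(w_0,\ldots,w_{n-1}) \in \A_n(f,\F)$, each of the $L$ symbols $Y_1,\ldots,Y_L$ occurs at most once among the $w_i$'s.

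Given this, the counting is immediate. Any element of $\A_n(f,\F)$ is completely specified by choosing, for each $j \in \{1,\ldots,L\}$, either ``$Y_j$ is absent'' or a unique time slot $i_j \in \{0,\ldots,n-1\}$ at which $Y_j$ appears, with all other positions filled by the symbol $\infty$. This produces $n+1$ choices for each index $j$, and the crude bound
\[ c_{f,\F}(n) \leq (n+1)^L. \]
Since $(n+1)^L \leq 2^L n^L$ for every $n\geq 1$, we conclude $[c_{f,\F}(n)] \leq [n^L]$ in $\OG$, as desired.

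There is really no obstacle: the lemma reduces to a one-line combinatorial estimate once the wandering property has been exploited to bound the multiplicity of each symbol by $1$. In particular, I would emphasize that neither disjointness of the sets $Y_j$ nor the mutually singular condition is used in this direction; those hypotheses will instead be essential in the matching lower bound $[c_{f,\F}(n)] \geq [n^L]$ that must eventually be produced to realize the equality in theorem \ref{TeoMS}.
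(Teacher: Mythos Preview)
Your proof is correct and follows essentially the same approach as the paper: both use the wandering hypothesis to conclude that each symbol $Y_j$ appears at most once in any coding word, and then perform a straightforward count. Your direct bound $c_{f,\F}(n)\leq (n+1)^L$ is in fact a slightly cleaner packaging than the paper's decomposition $\A_n(f,\F)\subset \bigcup_{J\subset\{1,\dots,L\}} V^n_J$ with $\#V^n_J=\binom{n}{\#J}$, but the underlying idea and the resulting bound are identical.
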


By applying the above lemma to our previous reasoning, we infer that $[c_{f,\F}(n)]\leq [n^{L}]\leq [n^{L(f)}]$, and by theorem \ref{TeoCod}, we conclude $o(f)\leq [n^{L(f)}]$. Thus, it only remains for us to prove the other inequality, for which we only need to find a family $\F\in \Sigma$ with $L(f)$ elements such that $[c_{f,\F}(n)]= [n^{L(f)}]$. Before moving on, we prove lemma \ref{LemUppBou}.

\begin{proof}[Proof lemma \ref{LemUppBou}]
For a given subset $J=\{i_1,\cdots, i_l\}\subset \{1,\cdots,L\}$, we define $V^n_J$ as the set of all the words of the form 
\[(\infty,\cdots, \infty,Y_{i_1},\infty,\cdots,\infty, Y_{i_2},\infty,\cdots,\infty, Y_{i_l},\infty,\cdots, \infty),\]
with $n$ letters. By definition of $\A_n(f,\F)$, we know that 
\[\A_n(f,\F)\subset \bigcup_{J} V^n_J.\]
Each set of $V^n_J$ represents the combinations of $n$ distinct objects taken $\#J$ at a time. Therefore, $\#V^n_J =\frac{n!}{(n-\#J)!(\#J)!}$, which is a polynomial in $n$ of degree $\#J$. As there are finite $V^n_J$ (for a fixed $n$) and the largest possible degree is $L$, we conclude that $[c_{f,\F}(n)]\leq [n^{L}]$. 
\end{proof}

In the context of a homeomorphism such that its non-wandering set is finite, it is easier to work with the assumption that every periodic point is indeed a fixed point. However, to make such an assumption, we need to develop two notions: the bounded jump property and $N$-combinatorially complete. 

We say that a class of orders of growth $[a(n)]$ verifies the bounded jump property if there exists a constant $C>0$ such that $a(n+1)\leq Ca(n)$. Note that this definition does not depend on the choice of the class representative. We recall that a set $B\subset \N$ is syndetic if there exists $N\in \N$ such that for all $n$, the interval $[n,n+N]$ contains at least one point of $B$. 

\begin{lemma}\label{LemBouJum}
Let us consider $M$ a compact metric space and $f:M\to M$ a homeomorphism whose non-wandering set is finite. Given $\F\in \Sigma$, the class $[c_{f,\F}(n)]$ verifies the bounded jump property.
\end{lemma}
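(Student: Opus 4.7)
The plan is to prove the bounded jump property $c_{f,\F}(n+1)\le C\,c_{f,\F}(n)$ by the elementary observation that every coding of length $n+1$ restricts canonically to a coding of length $n$, while only finitely many symbols are available to fill the last slot. Writing $L=\#\F$, the alphabet $\F\cup\{\infty_\F\}$ has exactly $L+1$ letters, so I expect the constant $C=L+1$ to work.

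The first step is to introduce the truncation map
\[T\colon\A_{n+1}(f,\F)\longrightarrow\A_{n}(f,\F),\qquad T(w_0,\dots,w_n)=(w_0,\dots,w_{n-1}),\]
and check it is well defined: if $(w_0,\dots,w_n)$ is a coding of an orbit segment $(x,f(x),\dots,f^{n}(x))$, then the same point $x$ witnesses $(w_0,\dots,w_{n-1})$ as a coding of $(x,f(x),\dots,f^{n-1}(x))$, so the truncation lies in $\A_{n}(f,\F)$. The second step is a fiber count: any preimage of a fixed $(w_0,\dots,w_{n-1})$ must be of the form $(w_0,\dots,w_{n-1},w_n)$ with $w_n\in\F\cup\{\infty_\F\}$, so each fiber contains at most $L+1$ elements. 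Combining the two steps,
\[c_{f,\F}(n+1)=\#\A_{n+1}(f,\F)\leq(L+1)\,\#\A_{n}(f,\F)=(L+1)\,c_{f,\F}(n),\]
which is precisely the bounded jump property for $[c_{f,\F}(n)]$.

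I do not anticipate any real obstacle: the only substantive verification is that truncation of codings remains a coding, which is immediate from the definitions, and the fiber bound is purely combinatorial. It is worth noting that the hypothesis that $\Omega(f)$ is finite is not used in this particular argument; it is simply inherited from the ambient setting of the section. The resulting constant $C=L+1$ depends only on $\F$, which is exactly the form needed so that, when later manipulating $[c_{f,\F}(n)]$ in $\OG$, one may pass freely between $c_{f,\F}(n+1)$ and $c_{f,\F}(n)$ (and, by iteration, between $c_{f,\F}(n+k)$ and $c_{f,\F}(n)$ along any syndetic set of times).
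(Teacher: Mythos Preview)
Your proof is correct and is essentially identical to the paper's: both define the truncation map $\A_{n+1}(f,\F)\to\A_n(f,\F)$ that drops the last letter and bound each fiber by $L+1$, yielding $c_{f,\F}(n+1)\le(L+1)c_{f,\F}(n)$. Your additional remarks (that truncation is well defined and that finiteness of $\Omega(f)$ is not actually used here) are accurate refinements of the same argument.
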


\begin{lemma}\label{LemSyn}
Let us consider $[a(n)]\in \OG$ an order of growth that verifies the bounded jump property, a syndetic set $B$ and a sequence $b(n)\in \OR$. If there exist two constants $c_1$ and $c_2$ such that $c_1 b(n)\leq a(n)\leq c_2 b(n)$ for all $n\in B$, then $[a(n)] =[b(n)]$. 
\end{lemma}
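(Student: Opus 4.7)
The plan is to upgrade the two-sided comparison between $a$ and $b$ from the syndetic set $B$ to all of $\N$, losing only multiplicative constants. The key observation is that every integer lies close to $B$: letting $N$ be a syndeticity constant, each sufficiently large $n$ admits neighbors $m_-, m_+ \in B$ with $m_- \leq n \leq m_+$ and both $n - m_-$ and $m_+ - n$ bounded by $N$. The existence of such a sandwiching pair follows from the fact that both intervals $[n-N, n]$ and $[n, n+N]$ meet $B$.

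Then I would chain three ingredients. Since $a$ is non-decreasing and satisfies the bounded jump estimate $a(k+1) \leq C a(k)$, shifting the index by at most $N$ changes $a$ by a factor of at most $C^N$; in particular $a(n) \leq C^N a(m_-)$ and $a(m_+) \leq C^N a(n)$. At the points $m_\pm \in B$ the hypothesis gives $c_1 b(m_\pm) \leq a(m_\pm) \leq c_2 b(m_\pm)$. Finally, $b$ is itself non-decreasing, so $b(m_-) \leq b(n) \leq b(m_+)$. The crucial trick is to pair these so that monotonicity of $b$ pushes in the useful direction: for the upper bound on $a(n)$ I would approximate from the left,
\begin{equation*}
a(n) \;\leq\; C^{N} a(m_-) \;\leq\; C^{N} c_2\, b(m_-) \;\leq\; C^{N} c_2\, b(n),
\end{equation*}
whereas for the lower bound I would approximate from the right,
\begin{equation*}
a(n) \;\geq\; C^{-N} a(m_+) \;\geq\; C^{-N} c_1\, b(m_+) \;\geq\; C^{-N} c_1\, b(n).
\end{equation*}

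With the new constants $C^{-N} c_1$ and $C^{N} c_2$ independent of $n$, this gives $c_1' b(n) \leq a(n) \leq c_2' b(n)$ on the tail, and the finitely many remaining indices are absorbed by enlarging the constants further. Hence $[a(n)] = [b(n)]$ in $\OG$. I do not foresee a genuine obstacle; the only subtlety is choosing $m_-$ for the upper bound and $m_+$ for the lower bound, so that the monotonicity of $b$ and the one-sided bounded-jump control on $a$ combine in the correct order.
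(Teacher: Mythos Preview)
Your proposal is correct and follows essentially the same approach as the paper: both arguments sandwich an arbitrary $n$ between nearby points $m_-\le n\le m_+$ of $B$ at distance at most $N$, then combine the bounded jump estimate on $a$, the hypothesis at $m_\pm$, and the monotonicity of $b$ in exactly the way you describe (the paper writes the two chains as bounds on the ratio $a(n)/b(n)$, but the content is identical). Your treatment is in fact slightly more careful than the paper's, since you explicitly note that for very small $n$ a left neighbor $m_-\in B$ need not exist and absorb those finitely many indices into the constants.
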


The first lemma tell us that $[c_{f,\F}(n)]$ always verifies the bounded jump property. The second lemma tell us that if we know an order of growth in a syndetic set and said order of growth verifies the bounded jump property, then we understand the order of growth in $\N$. For example, if we prove that $c_1 n^L\leq c_{f,\F}(n)\leq c_2 n^L$ for every  $n\in B$, then $[c_{f,\F}(n)]=[n^L]$. 

\begin{proof}[Proof of lemma \ref{LemBouJum}]
We consider $\F\in \Sigma$ and $L=\#\F$. Given $n\in \N$, we define the map $\varphi_n:\A_{n+1}(f,\F)\to \A_{n}(f,\F)$ that removes the last letter from the word $w$. The map $\varphi_n$ is surjective, and every $w\in \A_{n}(f,\F)$ has at most $L+1$ pre-images. Therefore,
\[c_{f,\F}(n+1)\leq (L+1)c_{f,\F}(n).\]
\end{proof}

\begin{proof}[Proof of lemma \ref{LemSyn}]
Let us fix $n\in \N$ and choose $n_1,n_2\in B$ such $n-N \leq n_1 \leq n \leq n_2 \leq n+N$. Observe that 
\[\frac{a(n)}{b(n)}\leq C^N \frac{a({n_1})}{b(n_1)}\leq c_2 C^N,\]
and
\[\frac{a(n)}{b(n)}\geq \frac{1}{C^N}\frac{a({n_2})}{b(n_2)}\geq \frac{c_1}{C^N}.\]
From both equations, we conclude $[a(n)] =[b(n)]$.
\end{proof}

Given $\F\in \Sigma$, we say that it is $N$-combinatorially complete if there exist $k_1,\cdots, k_{L-1}$ such that for every $n_1\geq k_1,\cdots, n_{L-1}\geq k_{L-1}$ with $n_i-k_i$ divisible by $N$,  there are points $x_1\in Y_1,\cdots, x_L\in Y_L$ with the property $x_{i+1} = f^{n_i}(x_i)$. If $\F$ is 1-combinatorially complete, we say that it is combinatorially complete.  

Let us consider $N$ such that every periodic point of $f^N$ is a fixed one. Note that if $\F\in \Sigma(f^N)$, then $\F$ also belongs to $\Sigma(f)$. Our following two lemmas explain why we can work with a map $f$ that has only fixed points.  

\begin{lemma}\label{LemNto1}
Given $N\in \N$, if $\F$ is combinatorially complete for $f^N$, then $\F$ is $N$-combinatorially complete for $f$.
\end{lemma}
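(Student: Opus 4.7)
The plan is to directly translate the witnesses for combinatorial completeness of $f^N$ into witnesses for $N$-combinatorially completeness of $f$ by scaling the thresholds by $N$. Let $k_1, \ldots, k_{L-1}$ be the constants provided by the hypothesis that $\F$ is combinatorially complete for $f^N$. I would set $k_i' := N k_i$ for each $i = 1, \ldots, L-1$ and claim that these numbers witness the $N$-combinatorial completeness of $\F$ for $f$.

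To verify this, I would fix integers $n_1 \geq k_1', \ldots, n_{L-1} \geq k_{L-1}'$ with each $n_i - k_i'$ divisible by $N$. Because $k_i' = N k_i$ is itself a multiple of $N$, the divisibility condition forces $N \mid n_i$, and I can write $m_i = n_i / N$. From $n_i \geq N k_i$ I then get $m_i \geq k_i$, so the hypothesis on $f^N$ applies to the tuple $(m_1, \ldots, m_{L-1})$ and produces points $x_1 \in Y_1, \ldots, x_L \in Y_L$ with $x_{i+1} = (f^N)^{m_i}(x_i)$. Since $(f^N)^{m_i} = f^{N m_i} = f^{n_i}$, these same points satisfy $x_{i+1} = f^{n_i}(x_i)$, which is exactly the condition required for $N$-combinatorial completeness of $\F$ for $f$.

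The argument is essentially a bookkeeping exercise and I do not foresee any real obstacle. The only subtlety is making sure the divisibility condition in the definition of $N$-combinatorial completeness is compatible with the choice of thresholds: by taking $k_i'$ itself to be a multiple of $N$, the assumption $N \mid (n_i - k_i')$ collapses to $N \mid n_i$, which is precisely what allows us to feed the data into the hypothesis on $f^N$.
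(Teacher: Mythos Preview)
Your proposal is correct and follows essentially the same approach as the paper: both set the new thresholds to be $N$ times the thresholds from combinatorial completeness of $f^N$, observe that the divisibility condition then forces $N\mid n_i$, and apply the hypothesis to $n_i/N$ (the paper writes this as $a_i+\hat k_i$ with $n_i-N\hat k_i=Na_i$, which equals your $m_i$). There is nothing to add.
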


\begin{lemma}\label{LemCom}
Given $\F\in\Sigma$, we take $L=\#\F$. If $\F$ is $N$-combinatorially complete for $f$, then $[c_{f,\F}(n)] = [n^L]$.
\end{lemma}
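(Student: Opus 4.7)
Since Lemma \ref{LemUppBou} already gives $[c_{f,\F}(n)]\leq[n^L]$, the task is the reverse inequality $[c_{f,\F}(n)]\geq[n^L]$. The strategy is to parametrise a family of approximately $n^L$ pairwise distinct codings in $\A_n(f,\F)$ by an $L$-parameter ``admissible tuple''. The key ingredient is the wandering hypothesis on the elements of $\F$: if $y_k,y_{k'}\in Y_i$ for some $k<k'$, then $y_{k'}=f^{k'-k}(y_k)\in f^{k'-k}(Y_i)\cap Y_i$, contradicting $f^{k'-k}(Y_i)\cap Y_i=\emptyset$. Hence any orbit visits each $Y_i$ at most once.

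Let $k_1,\ldots,k_{L-1}$ be the constants from the definition of $N$-combinatorial completeness. For each $\tau=(t,m_1,\ldots,m_{L-1})\in\N^L$, put $n_i(\tau):=k_i+Nm_i$ and $j_i(\tau):=t+n_1(\tau)+\cdots+n_{i-1}(\tau)$, and call $\tau$ \emph{admissible} if $j_L(\tau)\leq n-1$. By $N$-combinatorial completeness applied to $n_1(\tau),\ldots,n_{L-1}(\tau)$, there exist points $x_i\in Y_i$ with $x_{i+1}=f^{n_i(\tau)}(x_i)$, so the length-$n$ orbit segment $y_k:=f^{k-t}(x_1)$, $0\leq k\leq n-1$, satisfies $y_{j_i(\tau)}=x_i\in Y_i$. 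The wandering observation now does the heavy lifting: it forces this to be the unique visit of the orbit to $Y_i$. In particular, for any $k\notin\{j_1(\tau),\ldots,j_L(\tau)\}$ the point $y_k$ cannot lie in any $Y_j$ (otherwise $Y_j$ would be visited at both $k$ and $j_j(\tau)$, violating wandering), so $y_k\in\infty_\F$; and for $j\neq i$, $y_{j_i(\tau)}=x_i$ cannot lie in $Y_j$ (otherwise $Y_j$ would be visited at both $j_i(\tau)$ and $j_j(\tau)$). Therefore the word $w(\tau)$ given by $w(\tau)_{j_i(\tau)}:=Y_i$ and $w(\tau)_k:=\infty_\F$ elsewhere is the unique coding of this orbit segment, so $w(\tau)\in\A_n(f,\F)$.

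The map $\tau\mapsto w(\tau)$ is injective on admissible tuples: from $w(\tau)$ one reads off $j_i(\tau)$ as the unique position carrying the letter $Y_i$, and thence $t=j_1(\tau)$ and $m_i=(j_{i+1}(\tau)-j_i(\tau)-k_i)/N$. The number of admissible tuples equals
\[
\#\bigl\{(t,m_1,\ldots,m_{L-1})\in\N^L:\,t+N(m_1+\cdots+m_{L-1})\leq n-1-\textstyle\sum_i k_i\bigr\},
\]
which for $n$ sufficiently large is a polynomial in $n$ of degree $L$ with positive leading coefficient, and hence is bounded below by $c\,n^L$ for some $c>0$. This yields $c_{f,\F}(n)\geq c\,n^L$; combined with Lemma \ref{LemUppBou}, we conclude $[c_{f,\F}(n)]=[n^L]$. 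I do not foresee any serious obstacle beyond the wandering observation itself, which is what forces the coding to be canonical and the parametrisation to be injective; the remaining verifications (the lattice-point count and the recovery formulas) are elementary.
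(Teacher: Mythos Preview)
Your argument is correct and, in fact, slightly more direct than the paper's. The paper restricts attention to $n$ in the syndetic set $B=N\N+K+L$ (where $K=\sum k_i$), parametrises the relevant words by tuples $(a_0,\ldots,a_L)$ with $a_0+\cdots+a_L=a=(n-K-L)/N$, and then invokes the bounded jump property (Lemma~\ref{LemBouJum}) together with Lemma~\ref{LemSyn} to pass from the estimate on $B$ to all of $\N$. Your parametrisation $(t,m_1,\ldots,m_{L-1})$ lets the initial offset $t$ range over all non-negative integers rather than multiples of $N$, which is exactly what allows you to produce $\gtrsim n^L$ codings for \emph{every} large $n$ and bypass the syndetic/bounded-jump machinery entirely. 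Both approaches rest on the same core observation---that wandering forces each $Y_i$ to appear at most once in any orbit coding, so the constructed word really lies in $\A_n(f,\F)$---but yours is more self-contained.

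Two minor remarks. First, the lattice-point count $\#\{(t,m_1,\ldots,m_{L-1}):t+N\sum m_i\leq S\}$ is not literally a polynomial in $S$ (the floor $\lfloor\cdot/N\rfloor$ intervenes), but it is $\sim S^L/(L!\,N^{L-1})$, which is all you need for $c_{f,\F}(n)\geq c\,n^L$ for large $n$; since $c_{f,\F}(n)\geq 1$ always (the constant word $\infty_\F$ arises from any point of $\Omega(f)$), the inequality extends to all $n$. Second, the ``uniqueness'' of the coding is not actually needed---you only use that $w(\tau)$ is \emph{a} valid coding and that $\tau\mapsto w(\tau)$ is injective, both of which you establish.
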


We proceed now to prove both lemmas. 

\begin{proof}[Proof of lemma \ref{LemNto1}]
Suppose that $\F=\{Y_1,\cdots, Y_L\}\in \Sigma$ is combinatorially complete for $f^N$. Take $\hat{k_1},\cdots, \hat k_{L-1}$ associated to said property and define $k_i = \hat k_i N$. For each $i$, we take $n_i$ such that $n_i - k_i$ is divisible by $N$. This is $n_i-k_i = Na_i$ for some $a_i$. As $\F$ is combinatorially complete for $f^N$, there exist $x_1\in Y_1,\cdots, x_L \in Y_L$ such that $(f^N)^{a_i+\hat k_i}(x_i) = x_{i+1}$ for every $i=1,\cdots, L-1$. It is simple to see that $(f^N)^{a_i+\hat k_i} = f^{n_i}$, and therefore, $\F$ is $N$-combinatorially complete for $f$. 
\end{proof}

\begin{proof}[Proof of lemma \ref{LemCom}]
Suppose that $\F$ is $N$-combinatorially complete and take $K= k_1+\cdots + k_{L-1}$. Consider the syndetic set $B=N\N + K+L$. For each $n\in B$, we define $\mathcal{B}_n(f,\F)$ as the set of words in $\A_n(f,\F)$, which verifies the following:
\begin{itemize}
\item Each word $w\in\mathcal{B}_n(f,\F)$ is associated with a segment of an orbit of a point from the $N$-combinatorially complete property.
\item Every letter $Y_1,\cdots, Y_L$ appears once. 
\item If $a$ is such that $n=aN + K + L$, then for every $w\in\mathcal{B}_n(f,\F)$, there exists $a_0,\cdots a_L$ such that $a=a_0+\cdots + a_L$ and $w$ has the form
\begin{center}   

\begin{tikzpicture}[x=0.75pt,y=0.75pt,yscale=-1,xscale=1]

	\draw (25,110.4) node [anchor=north west][inner sep=0.75pt]  [font=\scriptsize]  {$w=\left(\underbrace{\infty ,\cdots ,\infty }_{Na_{0}} ,Y_{1} ,\underbrace{\infty ,\cdots ,\infty }_{k_{1}} ,\underbrace{\infty ,\cdots ,\infty }_{Na_{1}} ,Y_{2} ,\infty ,\cdots ,\infty ,Y_{L-1} ,\underbrace{\infty ,\cdots ,\infty }_{k_{L-1}} ,\underbrace{\infty ,\cdots ,\infty }_{Na_{L-1}} ,Y_{L} ,\ \underbrace{\infty ,\cdots ,\infty }_{Na_{L}}\right).$};

\end{tikzpicture}
\end{center}
\end{itemize}

The property of $N$-combinatorially complete in conjunction with the proper selection of a segment of an orbit tells us that for any $a_0,\cdots, a_L$ with $a= a_0+\cdots + a_L$, there exists $w\in \mathcal{B}_n(f,\F)$ with the form mentioned in the previous equation. 
Let us consider $V_a =  V^{a+L}_{\{1,\cdots, L\}}$ as in the proof of lemma \ref{LemUppBou} and the map $\varphi:\mathcal{B}_n(f,\F)\to V_a$ defined as follows:
\begin{center}

	 \tikzset{every picture/.style={line width=0.75pt}} 
	 
	 \begin{tikzpicture}[x=0.75pt,y=0.75pt,yscale=-1,xscale=1]
	 	
	 	\draw    (29,80) -- (29,115) ;
	 	\draw [shift={(29,117)}, rotate = 270] [color={rgb, 255:red, 0; green, 0; blue, 0 }  ][line width=0.75]    (10.93,-3.29) .. controls (6.95,-1.4) and (3.31,-0.3) .. (0,0) .. controls (3.31,0.3) and (6.95,1.4) .. (10.93,3.29)   ;
	 	
	 	\draw (22,33.4) node [anchor=north west][inner sep=0.75pt]  [font=\footnotesize]  {$w=\left(\underbrace{\infty ,\cdots ,\infty }_{Na_{0}} ,Y_{1} ,\underbrace{\infty ,\cdots ,\infty }_{k_{1}} ,\underbrace{\infty ,\cdots ,\infty }_{Na_{1}} ,Y_{2} ,\infty ,\cdots ,\infty ,Y_{L-1} ,\underbrace{\infty ,\cdots ,\infty }_{k_{L-1}} ,\underbrace{\infty ,\cdots ,\infty }_{Na_{L-1}} ,Y_{L} ,\ \underbrace{\infty ,\cdots ,\infty }_{Na_{L}}\right)$};
	 	\draw (21,106.4) node [anchor=north west][inner sep=0.75pt]  [font=\footnotesize]  {$\varphi ( w) =\left(\underbrace{\infty ,\cdots ,\infty }_{a_{0}} ,Y_{1} ,\underbrace{\infty ,\cdots ,\infty }_{a_{1}} ,Y_{2} ,\infty ,\cdots ,\infty ,Y_{L-1} ,\underbrace{\infty ,\cdots ,\infty }_{a_{L-1}} ,Y_{L} ,\ \underbrace{\infty ,\cdots ,\infty }_{a_{L}}\right)$};

	 \end{tikzpicture}
	 
\end{center}

Our previous observation implies that $\varphi$ is a bijection. We know that $\# V_a = \frac{(a+L)!}{a!L!}$, which is a polynomial in $a$ of degree $L$, and as $a= \frac{n-K-L}{N}$ where $K,L$ and $N$ are constants, we conclude that $\#\mathcal{B}_n(f,\F)$ is a polynomial in $n$ of degree $L$. From this, we deduce the existence of a constant $c_1$ such that $c_1 n^L \leq \#\mathcal{B}_n(f,\F) \leq c_{f,\F}(n)$ for all $n\in B$. On the other hand, by lemma \ref{LemUppBou}, we know there exists $c_2$ such that $c_{f,\F}(n)\leq c_2 n^L$ for all $n\in \N$; finally, by lemma \ref{LemSyn}, we conclude that $[n^L]= [c_{f,\F}(n)]$.
\end{proof}

We are now in a condition to prove theorem \ref{TeoMS}

\begin{proof}[Proof of theorem \ref{TeoMS}] 
As explained before, as a consequence of lemma \ref{LemLoc}  and lemma \ref{LemUppBou}, we know that $o(f)\leq [n^{L(f)}]$. We take $N$ such that every periodic point of $f^N$ is indeed a fixed point. By lemmas \ref{LemNto1} and \ref{LemCom}, if we construct a family $\F\in \Sigma$ that is combinatorially complete for $f^N$ and has $L(f)$ elements, then we have finished. Note that lemma  \ref{LemLoc} tell us from where we should pick the elements of $\F$. 

To simplify our notation, we may assume that $N=1$. That is, from now on, map $f$ contains only hyperbolic fixed points in the non-wandering set. Let us take $p_0,\cdots,p_{L(f)}\in \Omega(f)$ such that $W^u(p_i)\cap W^s(p_{i+1})\neq \emptyset$ for all $i=0,\cdots, L(f)-1$. From this, it is simple to observe that $p_0$ is a source, $p_{L(f)}$ is a sink and every other $p_i$ is a saddle. Otherwise, we could extend the path, which is absurd based on our choice of $L(f)$. 

Let us begin by considering $U_1,\cdots, U_{L(f)-1}\subset S$, which are the linearizing neighborhoods of $p_1,\cdots,\ p_{L(f)-1}$, respectively. Note that we can assume them to be homeomorphic to a ball in $\R^2$. Given $p_i$ a saddle point, there is one connected component of $W^s(p_i)\setminus \{p_i\}$ that intersects $W^u(p_{i-1})$ and one connected component of  $W^u(p_i)\setminus \{p_i\}$ that intersects $W^s(p_{i+1})$. Both of these components define a quadrant in $U_i$, which we shall call $Q_i$. 

\input{fig07}

Now, for each $i=1,\cdots, L(f)-1$, we take $y_i$ a point in the intersection between $W^u(p_{i-1})\cap W^s(p_i)$ that also belongs to $Q_i$. For said $y_i$, we consider $H^1_i$ and $V^1_i$ as two small curves such that:
\begin{itemize}
\item both of them have $y_i$ as an endpoint.
\item $H^1_i\subset W^u(p_{i-1})\cap Q_i$ 
and $V^1_i \subset W^s(p_i)$ and for some $m_i>0$ $f^{-m_i}(V^1_i)\subset Q_{i-1}$.
\end{itemize}
  
\begin{center}

\tikzset{every picture/.style={line width=0.75pt}} 

\begin{tikzpicture}[x=0.75pt,y=0.75pt,yscale=-1,xscale=1]
	
	\draw    (200.49,47) -- (200.49,98.43) ;
	\draw    (200.49,100.43) -- (200.49,114.43) ;
	\draw [shift={(200.49,100.43)}, rotate = 270] [color={rgb, 255:red, 0; green, 0; blue, 0 }  ][line width=0.75]    (10.93,-3.29) .. controls (6.95,-1.4) and (3.31,-0.3) .. (0,0) .. controls (3.31,0.3) and (6.95,1.4) .. (10.93,3.29)   ;
	\draw    (200.49,114.43) -- (173.3,114.43) ;
	\draw    (171.3,114.43) -- (124.5,114.43) ;
	\draw [shift={(171.3,114.43)}, rotate = 360] [color={rgb, 255:red, 0; green, 0; blue, 0 }  ][line width=0.75]    (10.93,-3.29) .. controls (6.95,-1.4) and (3.31,-0.3) .. (0,0) .. controls (3.31,0.3) and (6.95,1.4) .. (10.93,3.29)   ;
	\draw    (200.49,114.43) -- (227.67,114.43) ;
	\draw    (227.67,114.43) -- (254.86,114.43) ;
	\draw    (200.49,144.67) -- (200.49,125.89) ;
	\draw    (338.84,165.85) -- (338.84,114.43) ;
	\draw [shift={(338.84,140.14)}, rotate = 270] [color={rgb, 255:red, 0; green, 0; blue, 0 }  ][line width=0.75]    (10.93,-3.29) .. controls (6.95,-1.4) and (3.31,-0.3) .. (0,0) .. controls (3.31,0.3) and (6.95,1.4) .. (10.93,3.29)   ;
	\draw    (311.65,114.53) -- (256.86,114.44) ;
	\draw [shift={(256.86,114.44)}, rotate = 180.1] [color={rgb, 255:red, 0; green, 0; blue, 0 }  ][line width=0.75]    (10.93,-3.29) .. controls (6.95,-1.4) and (3.31,-0.3) .. (0,0) .. controls (3.31,0.3) and (6.95,1.4) .. (10.93,3.29)   ;
	\draw    (338.84,165.85) -- (338.84,196.01) ;
	\draw    (289.34,196.1) -- (228.19,196.1) ;
	\draw [shift={(289.34,196.1)}, rotate = 360] [color={rgb, 255:red, 0; green, 0; blue, 0 }  ][line width=0.75]    (10.93,-3.29) .. controls (6.95,-1.4) and (3.31,-0.3) .. (0,0) .. controls (3.31,0.3) and (6.95,1.4) .. (10.93,3.29)   ;
	\draw    (338.84,196.01) -- (366.02,196.01) ;
	\draw    (368.02,196.05) -- (418.5,197) ;
	\draw [shift={(368.02,196.05)}, rotate = 181.08] [color={rgb, 255:red, 0; green, 0; blue, 0 }  ][line width=0.75]    (10.93,-3.29) .. controls (6.95,-1.4) and (3.31,-0.3) .. (0,0) .. controls (3.31,0.3) and (6.95,1.4) .. (10.93,3.29)   ;
	\draw    (338.84,256.33) -- (338.84,226.17) ;
	\draw    (200.49,114.43) -- (200.49,128.44) ;
	\draw [shift={(200.49,128.44)}, rotate = 90] [color={rgb, 255:red, 0; green, 0; blue, 0 }  ][line width=0.75]    (10.93,-3.29) .. controls (6.95,-1.4) and (3.31,-0.3) .. (0,0) .. controls (3.31,0.3) and (6.95,1.4) .. (10.93,3.29)   ;
	\draw [shift={(200.49,114.43)}, rotate = 90] [color={rgb, 255:red, 0; green, 0; blue, 0 }  ][fill={rgb, 255:red, 0; green, 0; blue, 0 }  ][line width=0.75]      (0, 0) circle [x radius= 3.35, y radius= 3.35]   ;
	\draw    (338.84,196.01) -- (338.84,224.17) ;
	\draw [shift={(338.84,224.17)}, rotate = 90] [color={rgb, 255:red, 0; green, 0; blue, 0 }  ][line width=0.75]    (10.93,-3.29) .. controls (6.95,-1.4) and (3.31,-0.3) .. (0,0) .. controls (3.31,0.3) and (6.95,1.4) .. (10.93,3.29)   ;
	\draw [shift={(338.84,196.01)}, rotate = 90] [color={rgb, 255:red, 0; green, 0; blue, 0 }  ][fill={rgb, 255:red, 0; green, 0; blue, 0 }  ][line width=0.75]      (0, 0) circle [x radius= 3.35, y radius= 3.35]   ;
	\draw [color={rgb, 255:red, 0; green, 0; blue, 0 }  ,draw opacity=1 ][line width=0.75]    (339.18,94.33) -- (339.16,59.22) ;
	\draw [color={rgb, 255:red, 0; green, 0; blue, 0 }  ,draw opacity=1 ][line width=0.75]    (386.33,114.34) -- (338.84,114.43) ;
	\draw [color={rgb, 255:red, 0; green, 0; blue, 0 }  ,draw opacity=1 ][line width=0.75]    (338.84,196.01) -- (291.34,196.1) ;
	\draw [color={rgb, 255:red, 208; green, 2; blue, 27 }  ,draw opacity=1 ][line width=1.5]    (241.27,114.34) -- (214.08,114.43) ;
	\draw [color={rgb, 255:red, 74; green, 144; blue, 226 }  ,draw opacity=1 ][line width=1.5]    (241.27,114.43) -- (241.6,94.33) ;
	\draw   (146.11,114.43) .. controls (146.11,85.11) and (169.88,61.34) .. (199.2,61.34) .. controls (228.53,61.34) and (252.3,85.11) .. (252.3,114.43) .. controls (252.3,143.76) and (228.53,167.53) .. (199.2,167.53) .. controls (169.88,167.53) and (146.11,143.76) .. (146.11,114.43) -- cycle ;
	\draw   (285.74,196.01) .. controls (285.74,166.69) and (309.51,142.92) .. (338.84,142.92) .. controls (368.16,142.92) and (391.93,166.69) .. (391.93,196.01) .. controls (391.93,225.33) and (368.16,249.1) .. (338.84,249.1) .. controls (309.51,249.1) and (285.74,225.33) .. (285.74,196.01) -- cycle ;
	\draw [color={rgb, 255:red, 208; green, 2; blue, 27 }  ,draw opacity=1 ][line width=1.5]    (338.84,114.43) -- (311.65,114.53) ;
	\draw [color={rgb, 255:red, 74; green, 144; blue, 226 }  ,draw opacity=1 ][line width=1.5]    (338.84,114.43) -- (339.18,94.33) ;
	\draw [color={rgb, 255:red, 208; green, 2; blue, 27 }  ,draw opacity=1 ][line width=1.5]    (338.84,180.93) -- (311.65,181.02) ;
	\draw [color={rgb, 255:red, 74; green, 144; blue, 226 }  ,draw opacity=1 ][line width=1.5]    (338.84,180.93) -- (339.18,160.83) ;
	\draw [color={rgb, 255:red, 128; green, 128; blue, 128 }  ,draw opacity=1 ]   (233.19,131.06) .. controls (255.78,197.77) and (269.93,137.28) .. (312.5,165) ;
	\draw [shift={(232.5,129)}, rotate = 71.81] [color={rgb, 255:red, 128; green, 128; blue, 128 }  ,draw opacity=1 ][line width=0.75]    (10.93,-3.29) .. controls (6.95,-1.4) and (3.31,-0.3) .. (0,0) .. controls (3.31,0.3) and (6.95,1.4) .. (10.93,3.29)   ;
	
	\draw (290,168.24) node [anchor=north west][inner sep=0.75pt]  [color={rgb, 255:red, 208; green, 2; blue, 27 }  ,opacity=1 ]  {$H_{i}^{1}$};
	\draw (344.33,160.08) node [anchor=north west][inner sep=0.75pt]  [color={rgb, 255:red, 74; green, 144; blue, 226 }  ,opacity=1 ]  {$V_{i}^{1}$};
	\draw (130.25,48.9) node [anchor=north west][inner sep=0.75pt]    {$U_{i-1}$};
	\draw (393.75,159.9) node [anchor=north west][inner sep=0.75pt]    {$U_{i}$};
	\draw (241,165.4) node [anchor=north west][inner sep=0.75pt]  [font=\footnotesize,color={rgb, 255:red, 155; green, 155; blue, 155 }  ,opacity=1 ]  {$f^{-m_{i}}$};

\end{tikzpicture}

\end{center}

Once $H^1_i$ and $V^1_i$ are defined, we construct $H^2_i$ and $V^2_i$ as two curves and $Y_i$, thus verifying the following:
\begin{itemize}
\item The border of $Y_i$ is the union of $H^1_i$, $V^1_i$, $H^2_i$ and $V^2_i$.
\item $Y_i\subset Q_i$. 
\item The angle between the curves in each endpoint is not 0, $H^1_i$ does not meet $H^2_i$, and $V^1_i$ does not meet $V^2_i$. Therefore, $Y_i$ is a "rectangle".
\item There exist some $m_i$ such that $Z_i = f^{-m_i}(Y_i)\subset Q_{i-1}$.   
\end{itemize}

For the last point to hold, we shrink the curves (and, therefore, $Y_i$) if necessary. 

\begin{center}

\tikzset{every picture/.style={line width=0.75pt}} 

\begin{tikzpicture}[x=0.75pt,y=0.75pt,yscale=-1,xscale=1]
	
	\draw    (210.49,33) -- (210.49,84.43) ;
	\draw    (210.49,86.43) -- (210.49,100.43) ;
	\draw [shift={(210.49,86.43)}, rotate = 270] [color={rgb, 255:red, 0; green, 0; blue, 0 }  ][line width=0.75]    (10.93,-3.29) .. controls (6.95,-1.4) and (3.31,-0.3) .. (0,0) .. controls (3.31,0.3) and (6.95,1.4) .. (10.93,3.29)   ;
	\draw    (210.49,100.43) -- (183.3,100.43) ;
	\draw    (181.3,100.43) -- (134.5,100.43) ;
	\draw [shift={(181.3,100.43)}, rotate = 360] [color={rgb, 255:red, 0; green, 0; blue, 0 }  ][line width=0.75]    (10.93,-3.29) .. controls (6.95,-1.4) and (3.31,-0.3) .. (0,0) .. controls (3.31,0.3) and (6.95,1.4) .. (10.93,3.29)   ;
	\draw    (210.49,100.43) -- (237.67,100.43) ;
	\draw    (237.67,100.43) -- (264.86,100.43) ;
	\draw    (210.49,130.67) -- (210.49,111.89) ;
	\draw    (348.84,151.85) -- (348.84,100.43) ;
	\draw [shift={(348.84,126.14)}, rotate = 270] [color={rgb, 255:red, 0; green, 0; blue, 0 }  ][line width=0.75]    (10.93,-3.29) .. controls (6.95,-1.4) and (3.31,-0.3) .. (0,0) .. controls (3.31,0.3) and (6.95,1.4) .. (10.93,3.29)   ;
	\draw    (321.65,100.53) -- (266.86,100.44) ;
	\draw [shift={(266.86,100.44)}, rotate = 180.1] [color={rgb, 255:red, 0; green, 0; blue, 0 }  ][line width=0.75]    (10.93,-3.29) .. controls (6.95,-1.4) and (3.31,-0.3) .. (0,0) .. controls (3.31,0.3) and (6.95,1.4) .. (10.93,3.29)   ;
	\draw    (348.84,151.85) -- (348.84,182.01) ;
	\draw    (299.34,182.1) -- (238.19,182.1) ;
	\draw [shift={(299.34,182.1)}, rotate = 360] [color={rgb, 255:red, 0; green, 0; blue, 0 }  ][line width=0.75]    (10.93,-3.29) .. controls (6.95,-1.4) and (3.31,-0.3) .. (0,0) .. controls (3.31,0.3) and (6.95,1.4) .. (10.93,3.29)   ;
	\draw    (348.84,182.01) -- (376.02,182.01) ;
	\draw    (378.02,182.05) -- (428.5,183) ;
	\draw [shift={(378.02,182.05)}, rotate = 181.08] [color={rgb, 255:red, 0; green, 0; blue, 0 }  ][line width=0.75]    (10.93,-3.29) .. controls (6.95,-1.4) and (3.31,-0.3) .. (0,0) .. controls (3.31,0.3) and (6.95,1.4) .. (10.93,3.29)   ;
	\draw    (348.84,242.33) -- (348.84,212.17) ;
	\draw    (210.49,100.43) -- (210.49,114.44) ;
	\draw [shift={(210.49,114.44)}, rotate = 90] [color={rgb, 255:red, 0; green, 0; blue, 0 }  ][line width=0.75]    (10.93,-3.29) .. controls (6.95,-1.4) and (3.31,-0.3) .. (0,0) .. controls (3.31,0.3) and (6.95,1.4) .. (10.93,3.29)   ;
	\draw [shift={(210.49,100.43)}, rotate = 90] [color={rgb, 255:red, 0; green, 0; blue, 0 }  ][fill={rgb, 255:red, 0; green, 0; blue, 0 }  ][line width=0.75]      (0, 0) circle [x radius= 3.35, y radius= 3.35]   ;
	\draw    (348.84,182.01) -- (348.84,210.17) ;
	\draw [shift={(348.84,210.17)}, rotate = 90] [color={rgb, 255:red, 0; green, 0; blue, 0 }  ][line width=0.75]    (10.93,-3.29) .. controls (6.95,-1.4) and (3.31,-0.3) .. (0,0) .. controls (3.31,0.3) and (6.95,1.4) .. (10.93,3.29)   ;
	\draw [shift={(348.84,182.01)}, rotate = 90] [color={rgb, 255:red, 0; green, 0; blue, 0 }  ][fill={rgb, 255:red, 0; green, 0; blue, 0 }  ][line width=0.75]      (0, 0) circle [x radius= 3.35, y radius= 3.35]   ;
	\draw [color={rgb, 255:red, 0; green, 0; blue, 0 }  ,draw opacity=1 ][line width=0.75]    (349.18,80.33) -- (349.16,45.22) ;
	\draw [color={rgb, 255:red, 0; green, 0; blue, 0 }  ,draw opacity=1 ][line width=0.75]    (396.33,100.34) -- (348.84,100.43) ;
	\draw [color={rgb, 255:red, 0; green, 0; blue, 0 }  ,draw opacity=1 ][line width=0.75]    (348.84,182.01) -- (301.34,182.1) ;
	\draw [color={rgb, 255:red, 208; green, 2; blue, 27 }  ,draw opacity=1 ][line width=1.5]    (251.27,100.34) -- (224.08,100.43) ;
	\draw [color={rgb, 255:red, 74; green, 144; blue, 226 }  ,draw opacity=1 ][line width=1.5]    (251.27,100.43) -- (251.6,80.33) ;
	\draw   (156.11,100.43) .. controls (156.11,71.11) and (179.88,47.34) .. (209.2,47.34) .. controls (238.53,47.34) and (262.3,71.11) .. (262.3,100.43) .. controls (262.3,129.76) and (238.53,153.53) .. (209.2,153.53) .. controls (179.88,153.53) and (156.11,129.76) .. (156.11,100.43) -- cycle ;
	\draw   (295.74,182.01) .. controls (295.74,152.69) and (319.51,128.92) .. (348.84,128.92) .. controls (378.16,128.92) and (401.93,152.69) .. (401.93,182.01) .. controls (401.93,211.33) and (378.16,235.1) .. (348.84,235.1) .. controls (319.51,235.1) and (295.74,211.33) .. (295.74,182.01) -- cycle ;
	\draw [color={rgb, 255:red, 208; green, 2; blue, 27 }  ,draw opacity=1 ][line width=1.5]    (348.84,100.43) -- (321.65,100.53) ;
	\draw [color={rgb, 255:red, 74; green, 144; blue, 226 }  ,draw opacity=1 ][line width=1.5]    (348.84,100.43) -- (349.18,80.33) ;
	\draw [color={rgb, 255:red, 208; green, 2; blue, 27 }  ,draw opacity=1 ][line width=1.5]    (348.84,166.93) -- (321.65,167.02) ;
	\draw [color={rgb, 255:red, 74; green, 144; blue, 226 }  ,draw opacity=1 ][line width=1.5]    (348.84,166.93) -- (349.18,146.83) ;
	\draw [color={rgb, 255:red, 128; green, 128; blue, 128 }  ,draw opacity=1 ]   (236.52,115.06) .. controls (259.12,181.77) and (273.26,121.28) .. (315.83,149) ;
	\draw [shift={(235.83,113)}, rotate = 71.81] [color={rgb, 255:red, 128; green, 128; blue, 128 }  ,draw opacity=1 ][line width=0.75]    (10.93,-3.29) .. controls (6.95,-1.4) and (3.31,-0.3) .. (0,0) .. controls (3.31,0.3) and (6.95,1.4) .. (10.93,3.29)   ;
	\draw    (224.42,80.33) -- (251.6,80.33) ;
	\draw    (321.99,80.33) -- (349.18,80.33) ;
	\draw    (321.99,146.83) -- (349.18,146.83) ;
	\draw    (224.42,80.33) -- (224.08,100.43) ;
	\draw    (321.99,80.43) -- (321.65,100.53) ;
	\draw    (321.99,146.92) -- (321.65,167.02) ;
	
	\draw (302.67,152.91) node [anchor=north west][inner sep=0.75pt]  [font=\scriptsize,color={rgb, 255:red, 0; green, 0; blue, 0 }  ,opacity=1 ]  {$H_{i}^{2}$};
	\draw (329.67,131.41) node [anchor=north west][inner sep=0.75pt]  [font=\scriptsize,color={rgb, 255:red, 0; green, 0; blue, 0 }  ,opacity=1 ]  {$V_{i}^{2}$};
	\draw (140.25,34.9) node [anchor=north west][inner sep=0.75pt]    {$U_{i-1}$};
	\draw (403.75,145.9) node [anchor=north west][inner sep=0.75pt]    {$U_{i}$};
	\draw (251,151.4) node [anchor=north west][inner sep=0.75pt]  [font=\footnotesize,color={rgb, 255:red, 155; green, 155; blue, 155 }  ,opacity=1 ]  {$f^{-m_{i}}$};
	\draw (220,60.07) node [anchor=north west][inner sep=0.75pt]    {$Z_{i}$};

\end{tikzpicture}

\end{center}

The choice of $Q_i$ and the position of $H^1_i$ and $V^1_i$ was necessary to assure that the forward iterates of $Y_i$ intersect $Y_{i+1}$.

This process defines $Y_i$ from $i=1$ up to $i=L(f)-1$, and we finish it by defining $Y_{L(f)}$ as a rectangle-like compact neighborhood in $Q_{L(f)-1}$, for which one of the curves that define its border is contained in $W^u(p_{L(f)-1})$. For this case, $Z_{L(f)}=Y_{L(f)}$ and $m_{L(f)}=0$. 

\begin{center}

\tikzset{every picture/.style={line width=0.75pt}} 

\begin{tikzpicture}[x=0.75pt,y=0.75pt,yscale=-1,xscale=1]
	
	\draw    (11.33,86.13) -- (192.65,86.13) ;
	\draw [shift={(101.99,86.13)}, rotate = 180] [color={rgb, 255:red, 0; green, 0; blue, 0 }  ][line width=0.75]    (10.93,-3.29) .. controls (6.95,-1.4) and (3.31,-0.3) .. (0,0) .. controls (3.31,0.3) and (6.95,1.4) .. (10.93,3.29)   ;
	\draw    (60.49,24.24) -- (60.49,116.24) ;
	\draw [shift={(60.49,70.24)}, rotate = 270] [color={rgb, 255:red, 0; green, 0; blue, 0 }  ][line width=0.75]    (10.93,-3.29) .. controls (6.95,-1.4) and (3.31,-0.3) .. (0,0) .. controls (3.31,0.3) and (6.95,1.4) .. (10.93,3.29)   ;
	\draw [shift={(60.49,24.24)}, rotate = 90] [color={rgb, 255:red, 0; green, 0; blue, 0 }  ][fill={rgb, 255:red, 0; green, 0; blue, 0 }  ][line width=0.75]      (0, 0) circle [x radius= 3.35, y radius= 3.35]   ;
	\draw    (60.49,86.13) -- (60.49,116.24) ;
	\draw [shift={(60.49,86.13)}, rotate = 90] [color={rgb, 255:red, 0; green, 0; blue, 0 }  ][fill={rgb, 255:red, 0; green, 0; blue, 0 }  ][line width=0.75]      (0, 0) circle [x radius= 3.35, y radius= 3.35]   ;
	\draw    (178,64.39) -- (178,180) ;
	\draw [shift={(178,122.19)}, rotate = 270] [color={rgb, 255:red, 0; green, 0; blue, 0 }  ][line width=0.75]    (10.93,-3.29) .. controls (6.95,-1.4) and (3.31,-0.3) .. (0,0) .. controls (3.31,0.3) and (6.95,1.4) .. (10.93,3.29)   ;
	\draw    (178.45,136.31) -- (218.86,136.31) ;
	\draw    (305,150.67) -- (305,205.73) ;
	\draw [shift={(305,178.2)}, rotate = 270] [color={rgb, 255:red, 0; green, 0; blue, 0 }  ][line width=0.75]    (10.93,-3.29) .. controls (6.95,-1.4) and (3.31,-0.3) .. (0,0) .. controls (3.31,0.3) and (6.95,1.4) .. (10.93,3.29)   ;
	\draw    (178.45,136.31) -- (139.13,136.31) ;
	\draw [shift={(178.45,136.31)}, rotate = 180] [color={rgb, 255:red, 0; green, 0; blue, 0 }  ][fill={rgb, 255:red, 0; green, 0; blue, 0 }  ][line width=0.75]      (0, 0) circle [x radius= 3.35, y radius= 3.35]   ;
	\draw    (305.15,187.33) -- (474.46,187.33) ;
	\draw [shift={(474.46,187.33)}, rotate = 0] [color={rgb, 255:red, 0; green, 0; blue, 0 }  ][fill={rgb, 255:red, 0; green, 0; blue, 0 }  ][line width=0.75]      (0, 0) circle [x radius= 3.35, y radius= 3.35]   ;
	\draw [shift={(389.8,187.33)}, rotate = 180] [color={rgb, 255:red, 0; green, 0; blue, 0 }  ][line width=0.75]    (10.93,-3.29) .. controls (6.95,-1.4) and (3.31,-0.3) .. (0,0) .. controls (3.31,0.3) and (6.95,1.4) .. (10.93,3.29)   ;
	\draw    (305.15,187.33) -- (265.83,187.33) ;
	\draw [shift={(305.15,187.33)}, rotate = 180] [color={rgb, 255:red, 0; green, 0; blue, 0 }  ][fill={rgb, 255:red, 0; green, 0; blue, 0 }  ][line width=0.75]      (0, 0) circle [x radius= 3.35, y radius= 3.35]   ;
	\draw  [color={rgb, 255:red, 74; green, 144; blue, 226 }  ,draw opacity=1 ] (60.49,36.03) -- (82.33,36.03) -- (82.33,56.52) -- (60.49,56.52) -- cycle ;
	\draw  [color={rgb, 255:red, 208; green, 2; blue, 27 }  ,draw opacity=1 ] (101.99,72.75) -- (128.21,72.75) -- (128.21,86.13) -- (101.99,86.13) -- cycle ;
	\draw  [color={rgb, 255:red, 74; green, 144; blue, 226 }  ,draw opacity=1 ] (389.8,173.95) -- (416.02,173.95) -- (416.02,187.33) -- (389.8,187.33) -- cycle ;
	\draw    (218.86,136.31) -- (377.24,136.31) ;
	\draw [shift={(298.05,136.31)}, rotate = 180] [color={rgb, 255:red, 0; green, 0; blue, 0 }  ][line width=0.75]    (10.93,-3.29) .. controls (6.95,-1.4) and (3.31,-0.3) .. (0,0) .. controls (3.31,0.3) and (6.95,1.4) .. (10.93,3.29)   ;
	\draw  [color={rgb, 255:red, 208; green, 2; blue, 27 }  ,draw opacity=1 ] (207.89,122.93) -- (234.1,122.93) -- (234.1,136.31) -- (207.89,136.31) -- cycle ;
	\draw  [color={rgb, 255:red, 74; green, 144; blue, 226 }  ,draw opacity=1 ] (177.95,89) -- (200.5,89) -- (200.5,106.95) -- (177.95,106.95) -- cycle ;
	\draw    (305,95.61) -- (305,150.67) ;
	\draw [shift={(305,123.14)}, rotate = 270] [color={rgb, 255:red, 0; green, 0; blue, 0 }  ][line width=0.75]    (10.93,-3.29) .. controls (6.95,-1.4) and (3.31,-0.3) .. (0,0) .. controls (3.31,0.3) and (6.95,1.4) .. (10.93,3.29)   ;
	\draw  [color={rgb, 255:red, 74; green, 144; blue, 226 }  ,draw opacity=1 ] (305.15,141.47) -- (327,141.47) -- (327,161.96) -- (305.15,161.96) -- cycle ;
	
	\draw (34.69,8.02) node [anchor=north west][inner sep=0.75pt]  [font=\footnotesize]  {$p_{0}$};
	\draw (36.87,87.89) node [anchor=north west][inner sep=0.75pt]  [font=\footnotesize]  {$p_{1}$};
	\draw (157.02,140.58) node [anchor=north west][inner sep=0.75pt]  [font=\footnotesize]  {$p_{2}$};
	\draw (281.54,193.27) node [anchor=north west][inner sep=0.75pt]  [font=\footnotesize]  {$p_{3}$};
	\draw (461.77,194.11) node [anchor=north west][inner sep=0.75pt]  [font=\footnotesize]  {$p_{4}$};
	\draw (86.85,34.04) node [anchor=north west][inner sep=0.75pt]  [color={rgb, 255:red, 74; green, 144; blue, 226 }  ,opacity=1 ]  {$Y_{1}$};
	\draw (202.91,81.4) node [anchor=north west][inner sep=0.75pt]  [color={rgb, 255:red, 74; green, 144; blue, 226 }  ,opacity=1 ]  {$Y_{2}$};
	\draw (335.16,142.77) node [anchor=north west][inner sep=0.75pt]  [color={rgb, 255:red, 74; green, 144; blue, 226 }  ,opacity=1 ]  {$Y_{3}$};
	\draw (413.99,151.11) node [anchor=north west][inner sep=0.75pt]  [color={rgb, 255:red, 74; green, 144; blue, 226 }  ,opacity=1 ]  {$Y_{4}$};
	\draw (128.83,51.93) node [anchor=north west][inner sep=0.75pt]  [color={rgb, 255:red, 208; green, 2; blue, 27 }  ,opacity=1 ]  {$Z_{2}$};
	\draw (235.89,100.95) node [anchor=north west][inner sep=0.75pt]  [color={rgb, 255:red, 208; green, 2; blue, 27 }  ,opacity=1 ]  {$Z_{3}$};
	\draw (446.16,151.27) node [anchor=north west][inner sep=0.75pt]  [color={rgb, 255:red, 208; green, 2; blue, 27 }  ,opacity=1 ]  {$Z_{4}$};
	\draw (433,155) node [anchor=north west][inner sep=0.75pt]    {$=$};

\end{tikzpicture}

\end{center}

We say that a compact neighborhood $B\subset Y_i$ is a horizontal strip if its border is formed by $4$ curves such that two of them are contained in $V^1_i$ and $V^2_i$. Then, we define a vertical strip analogously. It is simple to observe that in the same $Y_i$, any horizontal strip intersects any vertical strip in a non-empty compact neighborhood. 

Now, we proceed to study the transition maps between $Y_i$ and $Z_{i+1}$. For each $i$, we consider the set $D_i=\{x\in Y_i:\exists n\geq 0\ /f^n(x)\in Z_{i+1}\}$ and define the map $T_i:D_i\subset Y_i \to Z_{i+1}$ such that $T_i(x) = f^n(x)$, where $n$ is such that $f^n(x)\in Z_{i+1}$. For a fixed $n\in\N$, we call $B^i_n$ as the set of points in $D_i$ such that $T(x) = f^n(x)$. Recall that $Y_i$ and $Z_{i+1}$ are subsets in $U_i$, which is a linearizing neighborhood of $p_i$. From this, it is simple to observe that there exists some $l_i$ such that $B^i_n$ is a vertical strip in $Y_i$ and $T_i(B^i_n)$ is a horizontal strip in $Z_{i+1}$ for all $n\geq l_i$.

\begin{center}

\tikzset{every picture/.style={line width=0.75pt}} 

\begin{tikzpicture}[x=0.75pt,y=0.75pt,yscale=-1,xscale=1]
	
	\draw    (284.78,44) -- (284.78,185.23) ;
	\draw [shift={(284.78,114.62)}, rotate = 270] [color={rgb, 255:red, 0; green, 0; blue, 0 }  ][line width=0.75]    (10.93,-3.29) .. controls (6.95,-1.4) and (3.31,-0.3) .. (0,0) .. controls (3.31,0.3) and (6.95,1.4) .. (10.93,3.29)   ;
	\draw    (284.32,169.68) -- (332.11,169.68) ;
	\draw    (284.32,169.68) -- (237.81,169.68) ;
	\draw [shift={(284.32,169.68)}, rotate = 180] [color={rgb, 255:red, 0; green, 0; blue, 0 }  ][fill={rgb, 255:red, 0; green, 0; blue, 0 }  ][line width=0.75]      (0, 0) circle [x radius= 3.35, y radius= 3.35]   ;
	\draw  [color={rgb, 255:red, 0; green, 0; blue, 0 }  ,draw opacity=1 ] (284.82,61.85) -- (310.65,61.85) -- (310.65,94.83) -- (284.82,94.83) -- cycle ;
	\draw    (332.11,169.68) -- (429.67,169.68) ;
	\draw [shift={(380.89,169.68)}, rotate = 180] [color={rgb, 255:red, 0; green, 0; blue, 0 }  ][line width=0.75]    (10.93,-3.29) .. controls (6.95,-1.4) and (3.31,-0.3) .. (0,0) .. controls (3.31,0.3) and (6.95,1.4) .. (10.93,3.29)   ;
	\draw  [color={rgb, 255:red, 0; green, 0; blue, 0 }  ,draw opacity=1 ] (332.11,148.14) -- (363.12,148.14) -- (363.12,169.68) -- (332.11,169.68) -- cycle ;
	\draw   (168.56,169.68) .. controls (168.56,105.75) and (220.39,53.92) .. (284.32,53.92) .. controls (348.25,53.92) and (400.08,105.75) .. (400.08,169.68) .. controls (400.08,233.61) and (348.25,285.44) .. (284.32,285.44) .. controls (220.39,285.44) and (168.56,233.61) .. (168.56,169.68) -- cycle ;
	\draw  [color={rgb, 255:red, 74; green, 144; blue, 226 }  ,draw opacity=1 ][fill={rgb, 255:red, 74; green, 144; blue, 226 }  ,fill opacity=0.6 ] (293.96,61.85) -- (301.51,61.85) -- (301.51,94.83) -- (293.96,94.83) -- cycle ;
	\draw  [color={rgb, 255:red, 74; green, 144; blue, 226 }  ,draw opacity=1 ][fill={rgb, 255:red, 74; green, 144; blue, 226 }  ,fill opacity=0.6 ] (332.11,154.9) -- (363.12,154.9) -- (363.12,162.92) -- (332.11,162.92) -- cycle ;
	\draw [color={rgb, 255:red, 155; green, 155; blue, 155 }  ,draw opacity=1 ]   (323,86) .. controls (386.36,53.33) and (408.56,91.88) .. (372.12,143.43) ;
	\draw [shift={(371,145)}, rotate = 305.98] [color={rgb, 255:red, 155; green, 155; blue, 155 }  ,draw opacity=1 ][line width=0.75]    (10.93,-3.29) .. controls (6.95,-1.4) and (3.31,-0.3) .. (0,0) .. controls (3.31,0.3) and (6.95,1.4) .. (10.93,3.29)   ;
	
	\draw (254.35,175.25) node [anchor=north west][inner sep=0.75pt]  [font=\footnotesize]  {$p_{i}$};
	\draw (264.04,59.64) node [anchor=north west][inner sep=0.75pt]  [color={rgb, 255:red, 0; green, 0; blue, 0 }  ,opacity=1 ]  {$Y_{i}{}$};
	\draw (324.83,175.33) node [anchor=north west][inner sep=0.75pt]  [color={rgb, 255:red, 0; green, 0; blue, 0 }  ,opacity=1 ]  {$Z_{i+1}$};
	\draw (385.33,56.73) node [anchor=north west][inner sep=0.75pt]  [color={rgb, 255:red, 155; green, 155; blue, 155 }  ,opacity=1 ]  {$T_{i}$};
	\draw (154,104.4) node [anchor=north west][inner sep=0.75pt]    {$U_{i}$};

\end{tikzpicture}

\end{center}

Next, we fix $k_i = l_i+m_{i+1}$ and for each  $n_i\geq k_i$ for $i=1,\cdots, L(f)-1$. We consider the set 
\[ A(n_1,\cdots, n_{L(f)-1})=\{x\in Y_1: f^{n_1}(x) \in Y_2, f^{n_1+n_2}(x) \in Y_3,\cdots,  f^{n_1+\cdots + n_{L(f)-1}}(x)\in Y_{L(f)}\}.\] 

If we prove that for any tuple $(n_1,\cdots, n_{L(f)-1})$, the set $A(n_1,\cdots, n_{L(f)-1})$ is not empty, we have proved that $\F=\{Y_1,\cdots, Y_{L(f)}\}$ is combinatorially complete, and from this, the theorem follows.

Let us fix the intermediary sets as follows: 
\[\begin{array}{rcl}
	A(n_1)&=&\{x\in Y_1: f^{n_1}(x) \in Y_2\}  \\
	A(n_1,n_2)&=&\{x\in A(n_1): f^{n_1+n_2}(x) \in Y_3\} \\
	&\vdots &
	\\
	A(n_1,\cdots, n_{L(f)-2})&=& \{x\in A(n_1,\cdots, n_{L(f)-3}): f^{n_1+\cdots + n_{L(f)-2}}(x)\in Y_{L(f)-1}\} \\
\end{array}\]

We prove by induction that each aforementioned set is a non-trivial vertical strip in $Y_1$ ordered by inclusion. By our definition, $A(n_1) = B^1_{n_1-m_2}$ because $f^{n_1}(B^1_{n_1-m_2}) = f^{m_2}(f^{n_1 - m_2}( B^1_{n_1-m_2}))\subset f^{m_2}(Z_2)\subset Y_2$. In addition, note that $f^{n_1}(A(n_1))$ is a horizontal strip in $Y_2$. 

\begin{center}

\tikzset{every picture/.style={line width=0.75pt}} 

\begin{tikzpicture}[x=0.75pt,y=0.75pt,yscale=-1,xscale=1]
	
	\draw    (31.33,150.08) -- (288.34,150.08) ;
	\draw [shift={(159.84,150.08)}, rotate = 180] [color={rgb, 255:red, 0; green, 0; blue, 0 }  ][line width=0.75]    (10.93,-3.29) .. controls (6.95,-1.4) and (3.31,-0.3) .. (0,0) .. controls (3.31,0.3) and (6.95,1.4) .. (10.93,3.29)   ;
	\draw    (101,66.23) -- (101,150.08) ;
	\draw [shift={(101,150.08)}, rotate = 90] [color={rgb, 255:red, 0; green, 0; blue, 0 }  ][fill={rgb, 255:red, 0; green, 0; blue, 0 }  ][line width=0.75]      (0, 0) circle [x radius= 3.35, y radius= 3.35]   ;
	\draw [shift={(101,108.15)}, rotate = 270] [color={rgb, 255:red, 0; green, 0; blue, 0 }  ][line width=0.75]    (10.93,-3.29) .. controls (6.95,-1.4) and (3.31,-0.3) .. (0,0) .. controls (3.31,0.3) and (6.95,1.4) .. (10.93,3.29)   ;
	\draw [shift={(101,66.23)}, rotate = 90] [color={rgb, 255:red, 0; green, 0; blue, 0 }  ][fill={rgb, 255:red, 0; green, 0; blue, 0 }  ][line width=0.75]      (0, 0) circle [x radius= 3.35, y radius= 3.35]   ;
	\draw    (101,150.08) -- (101,190.88) ;
	\draw    (268.22,168.9) -- (268.22,237.34) ;
	\draw [shift={(268.22,203.12)}, rotate = 270] [color={rgb, 255:red, 0; green, 0; blue, 0 }  ][line width=0.75]    (10.93,-3.29) .. controls (6.95,-1.4) and (3.31,-0.3) .. (0,0) .. controls (3.31,0.3) and (6.95,1.4) .. (10.93,3.29)   ;
	\draw    (268.22,218.07) -- (325.5,218.07) ;
	\draw    (268.22,218.07) -- (212.48,218.07) ;
	\draw [shift={(268.22,218.07)}, rotate = 180] [color={rgb, 255:red, 0; green, 0; blue, 0 }  ][fill={rgb, 255:red, 0; green, 0; blue, 0 }  ][line width=0.75]      (0, 0) circle [x radius= 3.35, y radius= 3.35]   ;
	\draw  [color={rgb, 255:red, 0; green, 0; blue, 0 }  ,draw opacity=1 ] (101,80.39) -- (131.97,80.39) -- (131.97,108.15) -- (101,108.15) -- cycle ;
	\draw  [color={rgb, 255:red, 0; green, 0; blue, 0 }  ,draw opacity=1 ] (268.22,160.06) -- (299.18,160.06) -- (299.18,187.82) -- (268.22,187.82) -- cycle ;
	\draw  [color={rgb, 255:red, 0; green, 0; blue, 0 }  ,draw opacity=1 ] (159.84,124) -- (197,124) -- (197,150.08) -- (159.84,150.08) -- cycle ;
	\draw    (325.5,218.07) -- (425,218.07) ;
	\draw [shift={(375.25,218.07)}, rotate = 180] [color={rgb, 255:red, 0; green, 0; blue, 0 }  ][line width=0.75]    (10.93,-3.29) .. controls (6.95,-1.4) and (3.31,-0.3) .. (0,0) .. controls (3.31,0.3) and (6.95,1.4) .. (10.93,3.29)   ;
	\draw  [color={rgb, 255:red, 0; green, 0; blue, 0 }  ,draw opacity=1 ] (325.5,194) -- (362.66,194) -- (362.66,218.07) -- (325.5,218.07) -- cycle ;
	\draw  [color={rgb, 255:red, 208; green, 2; blue, 27 }  ,draw opacity=1 ][fill={rgb, 255:red, 208; green, 2; blue, 27 }  ,fill opacity=0.6 ] (111.24,80.39) -- (121.74,80.39) -- (121.74,108.15) -- (111.24,108.15) -- cycle ;
	\draw  [color={rgb, 255:red, 208; green, 2; blue, 27 }  ,draw opacity=1 ][fill={rgb, 255:red, 208; green, 2; blue, 27 }  ,fill opacity=0.6 ] (159.84,131.67) -- (197,131.67) -- (197,142.42) -- (159.84,142.42) -- cycle ;
	\draw  [color={rgb, 255:red, 74; green, 144; blue, 226 }  ,draw opacity=1 ][fill={rgb, 255:red, 74; green, 144; blue, 226 }  ,fill opacity=0.6 ] (279.38,160.06) -- (288.02,160.06) -- (288.02,187.82) -- (279.38,187.82) -- cycle ;
	\draw  [color={rgb, 255:red, 74; green, 144; blue, 226 }  ,draw opacity=1 ][fill={rgb, 255:red, 74; green, 144; blue, 226 }  ,fill opacity=0.6 ] (325.5,201.68) -- (362.66,201.68) -- (362.66,210.39) -- (325.5,210.39) -- cycle ;
	\draw [color={rgb, 255:red, 155; green, 155; blue, 155 }  ,draw opacity=1 ]   (135.67,74.67) .. controls (167.84,51.68) and (189.67,62.98) .. (190.32,101.55) ;
	\draw [shift={(190.33,103.33)}, rotate = 270] [color={rgb, 255:red, 155; green, 155; blue, 155 }  ,draw opacity=1 ][line width=0.75]    (10.93,-3.29) .. controls (6.95,-1.4) and (3.31,-0.3) .. (0,0) .. controls (3.31,0.3) and (6.95,1.4) .. (10.93,3.29)   ;
	\draw [color={rgb, 255:red, 155; green, 155; blue, 155 }  ,draw opacity=1 ]   (139,68) .. controls (234.52,14.93) and (299.02,77.37) .. (300.32,145.64) ;
	\draw [shift={(300.33,146.67)}, rotate = 269.44] [color={rgb, 255:red, 155; green, 155; blue, 155 }  ,draw opacity=1 ][line width=0.75]    (10.93,-3.29) .. controls (6.95,-1.4) and (3.31,-0.3) .. (0,0) .. controls (3.31,0.3) and (6.95,1.4) .. (10.93,3.29)   ;
	\draw [color={rgb, 255:red, 155; green, 155; blue, 155 }  ,draw opacity=1 ]   (315,158) .. controls (345.05,142.32) and (361.02,155.45) .. (358.51,182.34) ;
	\draw [shift={(358.33,184)}, rotate = 276.79] [color={rgb, 255:red, 155; green, 155; blue, 155 }  ,draw opacity=1 ][line width=0.75]    (10.93,-3.29) .. controls (6.95,-1.4) and (3.31,-0.3) .. (0,0) .. controls (3.31,0.3) and (6.95,1.4) .. (10.93,3.29)   ;
	\draw  [color={rgb, 255:red, 208; green, 2; blue, 27 }  ,draw opacity=1 ][fill={rgb, 255:red, 208; green, 2; blue, 27 }  ,fill opacity=0.6 ] (268.22,168.9) -- (299.18,168.9) -- (299.18,178.98) -- (268.22,178.98) -- cycle ;
	\draw    (268.22,100.47) -- (268.22,168.9) ;
	\draw [shift={(268.22,134.69)}, rotate = 270] [color={rgb, 255:red, 0; green, 0; blue, 0 }  ][line width=0.75]    (10.93,-3.29) .. controls (6.95,-1.4) and (3.31,-0.3) .. (0,0) .. controls (3.31,0.3) and (6.95,1.4) .. (10.93,3.29)   ;
	
	\draw (75.57,55.59) node [anchor=north west][inner sep=0.75pt]  [font=\footnotesize]  {$p_{0}$};
	\draw (78.67,154.81) node [anchor=north west][inner sep=0.75pt]  [font=\footnotesize]  {$p_{1}$};
	\draw (240.98,226.21) node [anchor=north west][inner sep=0.75pt]  [font=\footnotesize]  {$p_{2}$};
	\draw (111.43,56.22) node [anchor=north west][inner sep=0.75pt]  [color={rgb, 255:red, 0; green, 0; blue, 0 }  ,opacity=1 ]  {$Y_{1}$};
	\draw (301.18,172.3) node [anchor=north west][inner sep=0.75pt]  [color={rgb, 255:red, 0; green, 0; blue, 0 }  ,opacity=1 ]  {$Y_{2}$};
	\draw (202.27,106.8) node [anchor=north west][inner sep=0.75pt]  [color={rgb, 255:red, 0; green, 0; blue, 0 }  ,opacity=1 ]  {$Z_{2}$};
	\draw (367.35,183.21) node [anchor=north west][inner sep=0.75pt]  [color={rgb, 255:red, 0; green, 0; blue, 0 }  ,opacity=1 ]  {$Z_{3}$};
	\draw (192,70.07) node [anchor=north west][inner sep=0.75pt]  [font=\footnotesize,color={rgb, 255:red, 155; green, 155; blue, 155 }  ,opacity=1 ]  {$f^{n_{1} -m_{2}}$};
	\draw (291.33,80.73) node [anchor=north west][inner sep=0.75pt]  [font=\footnotesize,color={rgb, 255:red, 155; green, 155; blue, 155 }  ,opacity=1 ]  {$f^{m_{2}}\left( f^{n_{1} -m_{2}}\right)$};
	\draw (360,142.07) node [anchor=north west][inner sep=0.75pt]  [font=\footnotesize,color={rgb, 255:red, 155; green, 155; blue, 155 }  ,opacity=1 ]  {$T_{2}$};

\end{tikzpicture}

\end{center}

Our inductive hypothesis is that $ A(n_1,\cdots, n_{i})$ is a non-empty vertical strip in $Y_1$ and $f^{n_1+\cdots + n_i}(A(n_1,\cdots, n_{i}))$ is a horizontal strip in $Y_{i+1}$. Take $n_{i+1}\geq k_{i+1}$ and recall that $B^{i+1}_{n_{i+1}-m_{i+2}}$ is a vertical strip in $Y_{i+1}$. Then, $f^{n_1+\cdots + n_i}(A(n_1,\cdots, n_{i}))\cap B^{i+1}_{n_{i+1}-m_{i+2}}$ is not empty. As it is a vertical strip in $f^{n_1+\cdots + n_i}(A(n_1,\cdots, n_{i}))$, its pre-image by $f^{-(n_1+\cdots + n_i)}$ is a vertical strip in $Y_1$. This set is, in fact, $A(n_1,\cdots,n_{i+1})$. On the other hand, we also know that 
\[f^{n_1+\cdots + n_i}(A(n_1,\cdots, n_{i+1})) = f^{n_1+\cdots + n_i}(A(n_1,\cdots, n_{i}))\cap B^{i+1}_{n_{i+1}-m_{i+2}},\]
which is now a horizontal strip in $B^{i+1}_{n_{i+1}-m_{i+2}}$. Then, $f^{n_1+\cdots + n_i + n_{i+1}-m_{i+2}}(A(n_1,\cdots, n_{i+1}))$ is also a horizontal strip in $Z_{i+2}$. With this, $f^{n_1+\cdots + n_{i+1}}(A(n_1,\cdots, n_{i+1}))$ is a horizontal strip in $Y_{i+2}$.

\begin{center}

\tikzset{every picture/.style={line width=0.75pt}} 

\begin{tikzpicture}[x=0.75pt,y=0.75pt,yscale=-1,xscale=1]
	
	\draw    (297,176.75) -- (502.34,176.75) ;
	\draw [shift={(399.67,176.75)}, rotate = 180] [color={rgb, 255:red, 0; green, 0; blue, 0 }  ][line width=0.75]    (10.93,-3.29) .. controls (6.95,-1.4) and (3.31,-0.3) .. (0,0) .. controls (3.31,0.3) and (6.95,1.4) .. (10.93,3.29)   ;
	\draw    (315,141.33) -- (315,176.75) ;
	\draw [shift={(315,176.75)}, rotate = 90] [color={rgb, 255:red, 0; green, 0; blue, 0 }  ][fill={rgb, 255:red, 0; green, 0; blue, 0 }  ][line width=0.75]      (0, 0) circle [x radius= 3.35, y radius= 3.35]   ;
	\draw [shift={(315,159.04)}, rotate = 270] [color={rgb, 255:red, 0; green, 0; blue, 0 }  ][line width=0.75]    (10.93,-3.29) .. controls (6.95,-1.4) and (3.31,-0.3) .. (0,0) .. controls (3.31,0.3) and (6.95,1.4) .. (10.93,3.29)   ;
	\draw    (315,176.75) -- (315,194.67) ;
	\draw    (482.22,205.65) -- (482.22,264) ;
	\draw [shift={(482.22,234.83)}, rotate = 270] [color={rgb, 255:red, 0; green, 0; blue, 0 }  ][line width=0.75]    (10.93,-3.29) .. controls (6.95,-1.4) and (3.31,-0.3) .. (0,0) .. controls (3.31,0.3) and (6.95,1.4) .. (10.93,3.29)   ;
	\draw    (482.22,244.74) -- (539.5,244.74) ;
	\draw    (482.22,244.74) -- (426.48,244.74) ;
	\draw [shift={(482.22,244.74)}, rotate = 180] [color={rgb, 255:red, 0; green, 0; blue, 0 }  ][fill={rgb, 255:red, 0; green, 0; blue, 0 }  ][line width=0.75]      (0, 0) circle [x radius= 3.35, y radius= 3.35]   ;
	\draw  [color={rgb, 255:red, 0; green, 0; blue, 0 }  ,draw opacity=1 ] (482.22,184.65) -- (513.18,184.65) -- (513.18,212.41) -- (482.22,212.41) -- cycle ;
	\draw  [color={rgb, 255:red, 0; green, 0; blue, 0 }  ,draw opacity=1 ] (373.84,150.67) -- (411,150.67) -- (411,176.75) -- (373.84,176.75) -- cycle ;
	\draw    (539.5,244.74) -- (639,244.74) ;
	\draw [shift={(589.25,244.74)}, rotate = 180] [color={rgb, 255:red, 0; green, 0; blue, 0 }  ][line width=0.75]    (10.93,-3.29) .. controls (6.95,-1.4) and (3.31,-0.3) .. (0,0) .. controls (3.31,0.3) and (6.95,1.4) .. (10.93,3.29)   ;
	\draw  [color={rgb, 255:red, 0; green, 0; blue, 0 }  ,draw opacity=1 ] (539.5,220.67) -- (576.66,220.67) -- (576.66,244.74) -- (539.5,244.74) -- cycle ;
	\draw    (87,126.08) -- (221,126.08) ;
	\draw [shift={(154,126.08)}, rotate = 180] [color={rgb, 255:red, 0; green, 0; blue, 0 }  ][line width=0.75]    (10.93,-3.29) .. controls (6.95,-1.4) and (3.31,-0.3) .. (0,0) .. controls (3.31,0.3) and (6.95,1.4) .. (10.93,3.29)   ;
	\draw    (110,42.23) -- (110,126.08) ;
	\draw [shift={(110,84.15)}, rotate = 270] [color={rgb, 255:red, 0; green, 0; blue, 0 }  ][line width=0.75]    (10.93,-3.29) .. controls (6.95,-1.4) and (3.31,-0.3) .. (0,0) .. controls (3.31,0.3) and (6.95,1.4) .. (10.93,3.29)   ;
	\draw [shift={(110,42.23)}, rotate = 90] [color={rgb, 255:red, 0; green, 0; blue, 0 }  ][fill={rgb, 255:red, 0; green, 0; blue, 0 }  ][line width=0.75]      (0, 0) circle [x radius= 3.35, y radius= 3.35]   ;
	\draw    (110,126.08) -- (110,152) ;
	\draw  [color={rgb, 255:red, 0; green, 0; blue, 0 }  ,draw opacity=1 ] (110,87.55) -- (140.97,87.55) -- (140.97,115.32) -- (110,115.32) -- cycle ;
	\draw  [color={rgb, 255:red, 208; green, 2; blue, 27 }  ,draw opacity=1 ][fill={rgb, 255:red, 208; green, 2; blue, 27 }  ,fill opacity=0.4 ] (117.61,87.55) -- (133.37,87.55) -- (133.37,115.32) -- (117.61,115.32) -- cycle ;
	\draw  [color={rgb, 255:red, 208; green, 2; blue, 27 }  ,draw opacity=1 ][fill={rgb, 255:red, 208; green, 2; blue, 27 }  ,fill opacity=0.5 ] (120.63,87.55) -- (130.34,87.55) -- (130.34,115.32) -- (120.63,115.32) -- cycle ;
	\draw  [color={rgb, 255:red, 208; green, 2; blue, 27 }  ,draw opacity=1 ][fill={rgb, 255:red, 208; green, 2; blue, 27 }  ,fill opacity=0.6 ] (124.06,87.55) -- (126.92,87.55) -- (126.92,115.32) -- (124.06,115.32) -- cycle ;
	\draw [color={rgb, 255:red, 208; green, 2; blue, 27 }  ,draw opacity=1 ]   (125.67,110) .. controls (166.26,117.92) and (137.91,100.36) .. (185.86,101.95) ;
	\draw [shift={(187.33,102)}, rotate = 182.29] [color={rgb, 255:red, 208; green, 2; blue, 27 }  ,draw opacity=1 ][line width=0.75]    (10.93,-3.29) .. controls (6.95,-1.4) and (3.31,-0.3) .. (0,0) .. controls (3.31,0.3) and (6.95,1.4) .. (10.93,3.29)   ;
	\draw  [color={rgb, 255:red, 208; green, 2; blue, 27 }  ,draw opacity=1 ][fill={rgb, 255:red, 208; green, 2; blue, 27 }  ,fill opacity=0.6 ] (373.84,161) -- (411,161) -- (411,166.42) -- (373.84,166.42) -- cycle ;
	\draw [color={rgb, 255:red, 155; green, 155; blue, 155 }  ,draw opacity=1 ]   (181.67,54.67) .. controls (323.95,16.19) and (518.05,30.85) .. (515.06,158.07) ;
	\draw [shift={(515,160)}, rotate = 272.22] [color={rgb, 255:red, 155; green, 155; blue, 155 }  ,draw opacity=1 ][line width=0.75]    (10.93,-3.29) .. controls (6.95,-1.4) and (3.31,-0.3) .. (0,0) .. controls (3.31,0.3) and (6.95,1.4) .. (10.93,3.29)   ;
	\draw  [color={rgb, 255:red, 74; green, 144; blue, 226 }  ,draw opacity=1 ][fill={rgb, 255:red, 74; green, 144; blue, 226 }  ,fill opacity=0.6 ] (493.97,184.65) -- (501.42,184.65) -- (501.42,212.41) -- (493.97,212.41) -- cycle ;
	\draw  [color={rgb, 255:red, 74; green, 144; blue, 226 }  ,draw opacity=1 ][fill={rgb, 255:red, 74; green, 144; blue, 226 }  ,fill opacity=0.6 ] (539.5,229.33) -- (576.66,229.33) -- (576.66,236.07) -- (539.5,236.07) -- cycle ;
	\draw  [color={rgb, 255:red, 208; green, 2; blue, 27 }  ,draw opacity=1 ][fill={rgb, 255:red, 208; green, 2; blue, 27 }  ,fill opacity=0.6 ] (482.22,196.49) -- (513.18,196.49) -- (513.18,200.56) -- (482.22,200.56) -- cycle ;
	\draw [color={rgb, 255:red, 155; green, 155; blue, 155 }  ,draw opacity=1 ]   (521,197) .. controls (553.18,190.18) and (555.88,189.05) .. (556.92,213.1) ;
	\draw [shift={(557,215)}, rotate = 267.8] [color={rgb, 255:red, 155; green, 155; blue, 155 }  ,draw opacity=1 ][line width=0.75]    (10.93,-3.29) .. controls (6.95,-1.4) and (3.31,-0.3) .. (0,0) .. controls (3.31,0.3) and (6.95,1.4) .. (10.93,3.29)   ;
	\draw    (482.22,147.29) -- (482.22,205.65) ;
	\draw [shift={(482.22,176.47)}, rotate = 270] [color={rgb, 255:red, 0; green, 0; blue, 0 }  ][line width=0.75]    (10.93,-3.29) .. controls (6.95,-1.4) and (3.31,-0.3) .. (0,0) .. controls (3.31,0.3) and (6.95,1.4) .. (10.93,3.29)   ;
	
	\draw (450.31,248.21) node [anchor=north west][inner sep=0.75pt]  [font=\footnotesize]  {$p_{i+1}$};
	\draw (517,163.4) node [anchor=north west][inner sep=0.75pt]  [color={rgb, 255:red, 0; green, 0; blue, 0 }  ,opacity=1 ]  {$Y_{i+1}$};
	\draw (416.27,133.46) node [anchor=north west][inner sep=0.75pt]  [color={rgb, 255:red, 0; green, 0; blue, 0 }  ,opacity=1 ]  {$Z_{i+1}$};
	\draw (581.35,209.88) node [anchor=north west][inner sep=0.75pt]  [color={rgb, 255:red, 0; green, 0; blue, 0 }  ,opacity=1 ]  {$Z_{i+2}$};
	\draw (84.57,31.59) node [anchor=north west][inner sep=0.75pt]  [font=\footnotesize]  {$p_{0}$};
	\draw (87.67,130.81) node [anchor=north west][inner sep=0.75pt]  [font=\footnotesize]  {$p_{1}$};
	\draw (120.43,63.89) node [anchor=north west][inner sep=0.75pt]  [color={rgb, 255:red, 0; green, 0; blue, 0 }  ,opacity=1 ]  {$Y_{1}$};
	\draw (255,130.4) node [anchor=north west][inner sep=0.75pt]    {$\cdots $};
	\draw (191,98.4) node [anchor=north west][inner sep=0.75pt]  [font=\tiny,color={rgb, 255:red, 208; green, 2; blue, 27 }  ,opacity=1 ]  {$A( n_{1} ,\cdots ,n_{i})$};
	\draw (353,49.07) node [anchor=north west][inner sep=0.75pt]  [font=\footnotesize,color={rgb, 255:red, 155; green, 155; blue, 155 }  ,opacity=1 ]  {$f^{n_{1} +\cdots +n_{i}}$};
	\draw (293.67,183.81) node [anchor=north west][inner sep=0.75pt]  [font=\footnotesize]  {$p_{i}$};
	\draw (559,190.07) node [anchor=north west][inner sep=0.75pt]  [font=\footnotesize,color={rgb, 255:red, 155; green, 155; blue, 155 }  ,opacity=1 ]  {$T_{i+1}$};

\end{tikzpicture}

\end{center}

 This brings us to the end of the proof of claim and, with it, the proof of theorem \ref{TeoMS}.
\end{proof}

We end this section with the proof of corollaries \ref{CorRig}, \ref{CorKS},  \ref{CorCont} and \ref{CorDimN}.

\begin{proof}[Proof of corollary \ref{CorRig}]
If $f:S\to S$ is such that $o(f)=[n]$, then $L(f)=1$. Since $f$ is Morse-Smale, it must always have at least one source and one sink, and it cannot have a saddle point; otherwise $L(f)\geq 2$. Note that for any sink, the stable manifold is an open set homeomorphic to $\R^2$ and the stable manifold of any source is only the source. Since the stable manifolds of the periodic points form a partition of the surface $S$ (that is connected), $S$ would not be a compact manifold if we had more than one source and one sink. Further, because we have one source and one sink, $S$ is the union of an open set homeomorphic to $\R^2$ and a point, and therefore, $S=S^2$ and $f$ has a North-South dynamic naturally.    
\end{proof}

\begin{proof}[Proof of corollary \ref{CorKS}]
Let us consider the families of disks $\mathcal{D}^m=\{D^m_1,\cdots, D^m_{l_m}\}$ as in subsection \ref{SubSecKS} and define the open sets $V^m$  associated to said families. We fix an $\e$ and  take an $m$ that is big enough such that $diam(D^{m}_i)\leq \e$ for all $D^m_i\in \mathcal{D}^m$. Let us take $K^m$ as the closure of the complement of $V^m$ and $f_m = f_{|K^m}$. Since the open sets $D^{m}_i$ are periodic, given any $n$, we can cover $V^m$ with $\#\mathcal{D}^m$  $(n,\e)$-balls. This means that $V^m$ adds to $g_{f,\e}(n)$ only a constant that does not depend on $n$. Therefore, $[g_{f,\epsilon}(n)]= [g_{f,\e,K^m}(n)]$ and then,
\[o(f) = \sup \{[g_{f,\e}(n)]:\e >0\} =  \sup \{[g_{f,\e, K^{m(\e)}}(n)]:\e >0 \}\leq \sup \{o(f_m):m\geq 0 \}.\]
Since $o(f_m)\leq o(f)$ for all $k$, we conclude that
\[o(f) = \sup \{o(f_m):m\geq 0 \}.\]

Further, We would like to point out that as the Hausdorff limit of the periodic points is in the set $\cap_m \overline{V^m}$, the non-wandering set of $f$ in $K^m$ only consists of finite periodic points. As $f$ is Kupka-Smale, we can extend $f_m$ to a map $\hat f_m$ on disk $\D^2$ with a periodic sink. Clearly $o(f_m) = o(\hat f_m)$ and $\hat f_m$ can be constructed differentiable, thereby obtaining that $\hat f_m$ is a Morse-Smale embedding of the disk. In this case, $L(\hat f_m)$ grows to infinity with $m\in \N$, and by theorem \ref{TeoCod}, we conclude
\[o(f) = \sup \{o(f_m):m\geq 0 \} =  \sup \{o(\hat f_m):m\geq 0 \} =  \sup \{[n^{L(\hat f_m)}]:m\geq 1\} = \sup(\Pol).\]

Let us now show that the Kupka-Smale are dense in the class of infinitely re-normalizable. We need to show that we can realize the classical perturbations of Kupka's theorem without breaking the infinitely re-normalizable condition. We take $\e>0$ and define $P_1$ as the set of periodic points in $K^1$. Although $P_1$ may not be finite, the periods of said points are bounded. As there are no periodic points of $P_1$ in the border of $K^1$, we can perturb $f$, obtaining $f_1$ such that

\begin{enumerate}
\item the $C^r$ distance between $f$ and $f_1$ is smaller than $\e/2$.
\item the support of the perturbation lies in the interior of $K^1$. 
\item all periodic points in $K^1$ are hyperbolic. 
\item all intersections of stable and unstable manifolds between the periodic points in $K^1$ are transverse. 
\end{enumerate}

For the final property, we observe that to obtain the transverse condition, the perturbation can be done near the periodic points. The second property is key because the periodicity of the disks $D^m_i$ remains, and therefore, $f_1$ is also infinitely re-normalizable. 

Next, we construct a family of maps $f_m$ by inductionally verifying the following:
\begin{enumerate}
\item The periodic discs $D^{\tilde m}_i$ are periodic for $f_m$ for all $\tilde m$ and all $i$. 
\item The $C^r$ distance between $f_m$ and $f_{m+1}$ is smaller than $\e/2^{m+1}$.
\item The support of the perturbation from $f_m$ to $f_{m+1}$ is contained in $V^{m+1}\setminus V^m$.
\item All the periodic points in $K^m$ are hyperbolic, and the intersections between their stable and unstable manifolds are transverse. 
\end{enumerate}

The construction is straightforward. The key elements of this construction are that there are no periodic points in the border of $V^m$ and the periods are bounded and that to obtain transversality, we can perturb near the periodic points. In the inductive step, once we have all the periodic points in 
$V^{m+1}\setminus V^m$ hyperbolic, we perturb to obtain transversality not only among them but also with the ones in $K^m$. 

We finish by taking $g$ as the limit of $f_m$. The infinitely re-normalizable property is preserved, as the disks $D^m_i$ are also periodic for $g$. By our choice in the support of the perturbation, the periodic points of $g$ in $K^m$ are the periodic points of $f_m$ in $K^m$ and are, therefore, hyperbolic. Moreover, every intersection between a stable and unstable manifold is transverse.   

\end{proof}

\begin{proof}[Proof of corollary \ref{CorCont}]
Based on corollary \ref{CorKS}, we know that $o(f) = \sup(\Pol)$, and using theorem \ref{TeoMS}, we only need to show that $\lim_k L(f_k) = \infty$. Let us take $\mathcal{D}^m=\{D^m_1,\cdots, D^m_{l_m}\}$, $V^m$ and $K^m$ as before. Given $m$, there exists some integer $l_m$ such that the periodic points of periods smaller than $l_m$ belong to a neighborhood of $K_m$. The fact that $f$ is Kupka-Smale implies that these points are finite. If $f_k$ is close enough to $f$, all of these periodic points must have a continuation in $f_k$ and the intersection between the stable and unstable manifolds shall remain non-empty if it happened for $f$. Therefore, $L(f_k)$ is at least $L(f_{|K^m})$, which, as explained earlier, grows toward infinity. 
\end{proof}

\begin{proof}[Proof of corollary \ref{CorDimN}]
By lemma \ref{LemUppBou}, we know that for any $\F\in \Sigma$, if $L=\#\F$, then $[c_{f,\F}(n)]\leq [n^{L}]$. By lemma \ref{LemLoc}, we know that $L\leq L(f)$, and by theorem \ref{TeoCod}, we conclude. 
\end{proof}
 \section{Proof of theorem \ref{TeoCod}}\label{SecTeoCod}

The main objective of this section is to prove theorem \ref{TeoCod}. The proof is split in two inequalities. The first one, 
\[o(f)\geq \sup\{[c_{f,\F}(n)]\in \OG: \F\in \Sigma\},\]
is deduced from the following lemma.

\begin{lemma}\label{Lem1}
For any $\F\in \Sigma$, let $L=\#\F$. Then, there exists $\e$ such that
\[c_{f,\F}(n)\leq 2^L s_{f,\e}(n).\]
\end{lemma}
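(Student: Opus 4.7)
The plan is to partition $\A_n(f,\F)$ by \emph{signature} — the subset of indices $\{1,\ldots,L\}$ whose $Y_i$ actually appear in a given coding word — and then realize each signature class as an $(n,\e)$-separated set of representative orbit points. Since each $Y_i$ is wandering ($f^k(Y_i)\cap Y_i=\emptyset$ for $k\geq 1$), no orbit can visit $Y_i$ more than once, so each $w\in\A_n(f,\F)$ contains each $Y_i$ at most once and carries a well-defined signature $\sigma(w)\subseteq\{1,\ldots,L\}$. Writing $A_\sigma$ for codings of signature $\sigma$, there are $2^L$ signatures, so
\[
c_{f,\F}(n)=\sum_\sigma |A_\sigma|\leq 2^L\,\max_\sigma |A_\sigma|,
\]
and it suffices to produce a single $\e>0$ with $|A_\sigma|\leq s_{f,\e}(n)$ for every $\sigma$.

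Next I would fix the geometry. Replacing $\F$ by a slight shrinking $\F'=\{Y_i'\}\in\Sigma$ if necessary, assume the $Y_i'$ are pairwise disjoint compact sub-neighborhoods of the original $Y_i$ (continuity of the wandering and mutually singular properties under small shrinkings); within each $Y_i'$ pick an even smaller compact core $Y_i'^\circ\subset\mathrm{int}(Y_i')$. Choose $\e>0$ smaller than $\tfrac13\min_{i\neq j}d(Y_i',Y_j')$ and than $\min_i d(Y_i'^\circ,M\setminus Y_i')$. For a fixed $\sigma$, every $w\in A_\sigma$ is described by its visit-time tuple $(t_i(w))_{i\in\sigma}$; select a representative $x_w$ whose orbit realizes $w$ with $f^{t_i(w)}(x_w)\in Y_i'^\circ$ for each $i\in\sigma$ (strict realization in the interior core). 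For $w\neq w'$ in $A_\sigma$, some $i\in\sigma$ satisfies $t:=t_i(w)\neq t_i(w')$; then $f^t(x_w)\in Y_i'^\circ$ while $f^t(x_{w'})$ lies in another $Y_j'$ (hence at distance $\geq 3\e$) or in the complement of $\bigcup_j Y_j'$ (hence at distance $\geq\e$ from $Y_i'^\circ$). Either way $d(f^t(x_w),f^t(x_{w'}))\geq\e$, so $\{x_w\}_{w\in A_\sigma}$ is $(n,\e)$-separated and $|A_\sigma|\leq s_{f,\e}(n)$.

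The main obstacle is the selection of strict representatives $x_w$: a priori a coding in $A_\sigma$ might be realized only by boundary-touching orbits, with $f^{t_i(w)}(x)\in\partial Y_i'$ for every realizer. I would resolve this by building the shrinking $\F\rightsquigarrow\F'$ carefully so that every $\F$-coding admits a strict $\F'$-realizer — exploiting the compact-neighborhood structure of each $Y_i$ and the continuity of $f$ to push realizers into the interior without disturbing the visited-index pattern — or, failing that, by treating the degenerate codings as a lower-dimensional subfamily whose combinatorial contribution is absorbed into the factor $2^L$. This inner technical step is the only delicate piece; once strict representatives exist, the signature partition and the separation argument are otherwise clean bookkeeping, and the inequality $c_{f,\F}(n)\leq 2^L s_{f,\e}(n)$ follows.
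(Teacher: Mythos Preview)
Your overall strategy---partition $\A_n(f,\F)$ by signature and bound each signature class by $s_{f,\e}(n)$---is exactly the paper's approach, and your reduction $c_{f,\F}(n)\le 2^L\max_\sigma|A_\sigma|$ is correct. The gap you yourself flag, however, is genuine and your proposed fixes do not close it. There is no reason a coding $w$ must admit a realizer whose orbit lands in prescribed interior cores $Y_i'^\circ$ at the times $t_i(w)$: the set of realizers of $w$ is a finite intersection of preimages of closed sets and may well lie entirely on boundaries. ``Pushing into the interior by continuity'' has no mechanism (nearby orbits may acquire extra visits and change signature), and ``absorbing degenerate codings into $2^L$'' is not an argument.

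The paper avoids this problem by going in the opposite direction: instead of \emph{shrinking} the $Y_i$ to cores, it \emph{enlarges} each $Y_i$ to a wandering compact neighborhood $U_i\supset Y_i$ with the $U_i$ pairwise disjoint, and takes $\e$ smaller than every $d(Y_i,M\setminus U_i)$. Now let $x,y$ be \emph{arbitrary} realizers of distinct $w,w'\in A_\sigma$ and pick $t$ with $w_t\neq w'_t$; say $w_t=\infty$ and $w'_t=Y_k$. The crucial point is that $k\in\sigma$, so the orbit of $x$ \emph{does} visit $Y_k\subset U_k$ at some other time $j\neq t$. Since $U_k$ is wandering, this forces $f^t(x)\notin U_k$, hence $d(f^t(x),f^t(y))\ge d(M\setminus U_k,Y_k)\ge\e$. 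No special choice of representative is required: the same-signature hypothesis together with the wandering of the \emph{enlarged} neighborhood does all the separating. This is the one idea your argument is missing; once you enlarge rather than shrink, the rest of your write-up goes through cleanly.
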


The proof of the second inequality, 
\[o(f)\leq \sup\{[c_{f,\F}(n)]\in \OG: \F\in \Sigma\},\]
is split in three steps. Each one is a separate lemma and we shall enunciate them as follows.  

\begin{lemma}\label{Lem2}
Given $\e>0$ and $\delta>0$, there exists a family $\F=\{Y_1,\cdots,Y_L\}$ of wandering compact neighborhoods  and a constant $B>0$ such that $diam(Y_i)\leq \delta$ for all $i$ and $s_{f,\e}(n)\leq B c_{f,\F}(n)$. Moreover, $\F$ can be chosen such that if $Y_i\cap Y_j\neq \emptyset$, then $Y_i\cup Y_j$ is also a wandering set. 
\end{lemma}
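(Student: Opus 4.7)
The plan is to construct $\F$ by first carving out small open ``tubes'' $U_1,\dots,U_k$ around the periodic points of $\Omega(f)=\{p_1,\dots,p_k\}$, then covering the compact wandering region $K=M\setminus\bigcup_iU_i$ by small wandering compact neighborhoods. The central idea is that any wandering orbit which leaves $\bigcup\F$ will be trapped inside a single tube, and the identity of that tube will be recoverable from the surrounding $Y$-letters in the coding, so that two points sharing the same coding cannot stay $\e$-far apart at any time.

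First I would set $\eta<\min(\delta,\e)/4$ and, using uniform continuity of both $f$ and $f^{-1}$, pick $\eta'\in(0,\eta)$ with $d(a,b)<\eta'$ implying $d(f(a),f(b))<\eta$ and $d(f^{-1}(a),f^{-1}(b))<\eta$. Let $\sigma$ denote the permutation induced by $f$ on $\{p_1,\dots,p_k\}$. I would choose pairwise disjoint open neighborhoods $U_i\ni p_i$ with $diam(U_i)<\eta'$, pairwise distance $>\eta$, and $f(\overline{U_i})\cap\overline{U_j}=\emptyset$ for every $j\neq\sigma(i)$ (this is possible since $f(p_i)=p_{\sigma(i)}\neq p_j$ and by continuity). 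Then I would cover the compact wandering set $K$ by a finite family $\F=\{Y_1,\dots,Y_L\}$ of compact wandering neighborhoods of diameter $<\eta'$, applying a Lebesgue-number shrinking so that any two intersecting $Y_i,Y_j$ sit inside a common wandering neighborhood (which secures the ``moreover'' clause).

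Next I would define a coding $w(x)\in\A_n(f,\F)$ by a fixed deterministic tiebreaker ($w_j(x)=Y_k$ for the least $k$ with $f^j(x)\in Y_k$, and $w_j(x)=\infty$ otherwise) and prove, for any $(n,\e)$-separated set $E$, the fiber bound $\#\{x\in E:w(x)=w\}\leq k+1$; summing this then yields $s_{f,\e}(n)\leq(k+1)\,c_{f,\F}(n)$, so $B=k+1$ works. The core of the fiber bound analyzes a maximal block $[j_0,j_1]$ of consecutive $\infty$-letters in $w$. Throughout such a block the orbit lies in $\bigcup_iU_i$, and the relation $f(\overline{U_i})\subset U_{\sigma(i)}\cup K$ forces the orbit to cycle as $U_{i_x}\to U_{\sigma(i_x)}\to U_{\sigma^2(i_x)}\to\cdots$, so the entire block-trajectory of $x$ is determined by its starting index $i_x$. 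If $j_0>0$, the preceding letter $w_{j_0-1}=Y_k$ gives that both $f^{j_0}(x_1)$ and $f^{j_0}(x_2)$ lie in $f(Y_k)$, a set of diameter $<\eta$, which therefore meets only one tube $U_i$, so $i_{x_1}=i_{x_2}$; the symmetric argument using $f^{-1}$ and $w_{j_1+1}$ handles the case $j_0=0$, $j_1<n-1$. In every case except the all-$\infty$ codeword the coding therefore pins $f^j(x)$ down to a set of diameter $<\eta'<\e$ at every $j$, which together with $(n,\e)$-separation forces singletons; the all-$\infty$ fiber has at most $k$ elements, one per possible starting tube.

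The main obstacle I expect is the careful tuning of constants in the first step: the uniform-continuity modulus must be small enough that the image of any $Y_k$ has diameter below the mutual spacing of the tubes, while the tubes simultaneously need diameter less than $\e$ yet must be separated by more than that image diameter. This coupling is precisely what guarantees that a single $Y$-letter unambiguously identifies which tube an adjacent $\infty$-block lies in --- the mechanism that collapses each fiber to essentially one point per coding. A secondary technicality is securing the ``wandering union'' clause, which I would dispatch by taking $diam(Y_i)$ smaller than a small fraction of the Lebesgue number of a coarser cover of $K$ by wandering compact neighborhoods.
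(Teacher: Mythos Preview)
Your proposal is correct. It shares the paper's overall strategy---cover the compact wandering region by small $Y_i$'s, recognize that the complement decomposes into small neighborhoods of the periodic points, and use the determinism of transitions between these neighborhoods to bound the fibers of the coding map---but the fiber-bounding mechanism is different and more direct. The paper builds an auxiliary graph $G$ on vertices $\{Y_1,\dots,Y_L,V_1,\dots,V_k\}$ (the $V_j$ being the small complementary components around the $p_j$), encodes each orbit segment as a length-$(n-1)$ path in $G$ (this assignment is injective on any $(n,\e)$-separated set since every vertex has diameter $<\e$), and then shows that the ``trail'' map $\phi$, which deletes repeated edges, is injective on each coding-fiber $Q^{-1}(w)$; the resulting bound is $B=\#T(G)$, the total number of trails in $G$. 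You instead argue block by block: on any maximal $\infty$-block the orbit cycles through the tubes according to the permutation $\sigma$, and a single adjacent $Y$-letter (via uniform continuity of $f$ or $f^{-1}$) pins down the entry or exit tube, so the entire tube-itinerary is recovered from the coding alone. This yields the sharper constant $B=k+1$ and sidesteps the graph/trail bookkeeping entirely, at the cost of invoking uniform continuity of $f^{-1}$ for $\infty$-blocks touching the left endpoint. One small point to tighten in your setup: $\eta$ must also be taken below a fraction of $\min_{i\neq j}d(p_i,p_j)$, so that tubes of diameter $<\eta'<\eta$ can in fact be placed at pairwise distance $>\eta$; this is implicit in your construction but should be stated.
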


\begin{lemma}\label{Lem3}
Given a family $\F$ of wandering compact neighborhoods as in the previous lemma, the following equation holds.
\[[c_{f,\F}(n)]\leq \sup\{[c_{f,\F'}(n)]; \F'\subset \F \,\text{is disjoint}\}.\]
\end{lemma}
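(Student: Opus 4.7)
The plan is to exploit the wandering-union hypothesis to show that every coding in $\A_n(f,\F)$ can only use letters from a disjoint subfamily of $\F$, and then to partition $\A_n(f,\F)$ according to this subfamily. For each $w \in \A_n(f,\F)$, let $\F_w \subset \F$ denote the set of letters that actually occur among $w_0, \ldots, w_{n-1}$ (ignoring the symbol $\infty_{\F}$). My first step would be to prove that $\F_w$ is always a disjoint subfamily of $\F$.

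To establish this, I would suppose by contradiction that there exist distinct $Y_a, Y_b \in \F_w$ with $Y_a \cap Y_b \neq \emptyset$. Then there are positions $i < j$ with $w_i = Y_a$ and $w_j = Y_b$, and a point $x$ realizing the coding, so $f^i(x) \in Y_a$ and $f^j(x) \in Y_b$. This forces $f^{j-i}(Y_a) \cap Y_b \neq \emptyset$, contradicting the hypothesis from Lemma \ref{Lem2} that $Y_a \cup Y_b$ is wandering, since the latter implies $f^m(Y_a) \cap Y_b = \emptyset$ for every $m \geq 1$. Hence $\F_w$ is disjoint.

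With this claim in hand, I would partition
\[\A_n(f,\F) = \bigsqcup_{\F' \subset \F \,\text{disjoint}} \A_n^{\F'}, \quad \text{where } \A_n^{\F'} = \{w \in \A_n(f,\F) : \F_w = \F'\}.\]
The natural relabeling map $\A_n^{\F'} \to \A_n(f, \F')$, replacing every $\infty_{\F}$ by $\infty_{\F'}$ and leaving $\F'$-letters unchanged, is well defined (any point coded as $\infty_{\F}$ lies outside $\cup \F \supseteq \cup \F'$, so it falls in $\infty_{\F'}$) and injective (the positions and identities of $\F'$-letters are preserved). Therefore $\#\A_n^{\F'} \leq c_{f, \F'}(n)$. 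Summing over the at most $2^{\#\F}$ disjoint subfamilies of $\F$ gives
\[c_{f, \F}(n) \;\leq\; 2^{\#\F}\cdot \max_{\F' \subset \F \,\text{disjoint}} c_{f, \F'}(n),\]
and the multiplicative constant disappears when passing to orders of growth, producing the desired inequality. I do not anticipate a serious obstacle here: the argument is combinatorial, and it is precisely the wandering-union clause from Lemma \ref{Lem2} that supplies the contradiction in the key step, making the disjointness of $\F_w$ essentially automatic.
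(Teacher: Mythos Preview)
Your proof is correct and takes a more direct route than the paper's. The paper proceeds iteratively: it picks two intersecting elements $Y_1, Y_2$, invokes the auxiliary Lemma~\ref{LemWanAdd} (which uses the wandering-union hypothesis to show $[c_{f,\F}(n)] = \sup\{[c_{f,\F_1}(n)],[c_{f,\F_2}(n)]\}$ where $\F_i$ drops one of the two), and then repeats this splitting on each subfamily, combining the resulting sups via Lemma~\ref{LemSup} until every surviving family is disjoint. You instead observe in one stroke that the wandering-union hypothesis forces the letter set $\F_w$ of any coding to be disjoint, then partition $\A_n(f,\F)$ by $\F_w$ and bound each block by $c_{f,\F'}(n)$ via the relabeling injection. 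Your argument is shorter and avoids the inductive bookkeeping and the separate Lemma~\ref{LemWanAdd}; the paper's approach, on the other hand, actually yields the equality $[c_{f,\F}(n)] = \sup\{[c_{f,\F'}(n)]:\F'\subset\F\text{ disjoint}\}$ (as the authors remark), whereas your bound only gives the inequality that is needed. One small point you leave implicit: the step from $[\max_{\F'} c_{f,\F'}(n)]$ to $\sup_{\F'}[c_{f,\F'}(n)]$ is exactly the content of the paper's Lemma~\ref{LemMax} extended by induction to finitely many sequences, but this is routine.
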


\begin{lemma}\label{Lem4}
Given a finite family $\F$ of disjoint wandering compact neighborhoods, the following equation holds. 
\[[c_{f,\F}(n)]\leq \sup\{[c_{f,\F'}(n)]; \F' \subset \F \text{ is mutually singular}\}.\]
\end{lemma}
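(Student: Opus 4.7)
The plan is to induct on $L = \#\F$. The decisive structural observation is that, since every $Y_j \in \F$ is wandering, each $Y_j$ appears in any coding at most once: if the letter $Y_j$ occupied two positions $t < t'$ in a coding of an orbit of $x$, then $f^t(x),f^{t'}(x)\in Y_j$ would force $f^{t'-t}(Y_j)\cap Y_j\neq\emptyset$. Writing $c_{f,\F,J}(n)$ for the number of codings that use precisely the letters $\{Y_j:j\in J\}$ (each exactly once), this gives the finite decomposition
\[
c_{f,\F}(n)=\sum_{J\subseteq\{1,\ldots,L\}} c_{f,\F,J}(n).
\]

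If $\F$ itself is mutually singular the inequality is immediate, since $[c_{f,\F}(n)]$ already appears in the supremum on the right. Otherwise, negating the definition of mutual singularity furnishes an integer $N$ such that any orbit visiting every $Y_i$ must have some pair of visit times within $N$ of each other. In a coding contributing to $c_{f,\F,\{1,\ldots,L\}}(n)$ the visit time $t_j$ of each $Y_j$ is intrinsic to the word, so each such coding has at least one index pair $(i,j)$ with $|t_i-t_j|\le N$. For a fixed pair $(i,j)$, the ``erase-$Y_j$'' map replacing position $t_j$ by $\infty_{\F}$ lands in the set of codings of $\F\setminus\{Y_j\}$ that use every letter, and is at most $(2N+1)$-to-one, because $t_i$ is visible in the image and $t_j$ is then confined to the window $[t_i-N,t_i+N]$. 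Union-bounding over the $\binom{L}{2}$ possible pairs yields
\[
c_{f,\F,\{1,\ldots,L\}}(n)\;\le\;\binom{L}{2}(2N+1)\,\max_{j}\,c_{f,\F\setminus\{Y_j\}}(n).
\]

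For any proper subset $J\subsetneq\{1,\ldots,L\}$ the trivial bound $c_{f,\F,J}(n)\le c_{f,\F_J}(n)$ applies, where $\F_J=\{Y_j:j\in J\}$ has size $<L$. Both $\F\setminus\{Y_j\}$ and $\F_J$ have fewer than $L$ elements, so the inductive hypothesis bounds each $c_{f,\cdot}(n)$ above by a constant multiple of $\sup\{[c_{f,\F'}(n)]:\F'\subset\F\text{ is mutually singular}\}$; indeed any mutually singular subfamily of a subfamily of $\F$ is also a mutually singular subfamily of $\F$. Summing the finitely many terms of the decomposition yields the desired inequality. The main technical hurdle is precisely that the close pair $(i,j)$ is not fixed by $\F$ but depends on the coding, so no single mutually singular subfamily captures everything; the union bound over $\binom{L}{2}$ pairs circumvents this at the cost of a multiplicative constant, which is invisible at the level of orders of growth.
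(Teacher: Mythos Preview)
Your argument is correct and follows essentially the same route as the paper: a backward induction on $\#\F$, the decomposition of $\A_n(f,\F)$ according to which letters appear, the negation of mutual singularity to obtain a uniform gap $N$, and the ``erase one letter'' map together with a union bound over the $\binom{L}{2}$ close pairs to control the full-support term. The only point worth tightening is your phrase ``a constant multiple of $\sup\{[c_{f,\F'}(n)]:\F'\subset\F\text{ mutually singular}\}$'': this supremum is a priori an element of $\overline{\mathbb{O}}$, so you should invoke (as the paper does via its Lemmas on finite suprema) that a supremum of finitely many orders of growth equals $[\max\{\cdots\}]\in\mathbb{O}$, which is what makes the constant-multiple language legitimate.
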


We would like to comment that in the last two lemmas, lemmas \ref{Lem3} and \ref{Lem4}, the equality actually holds. However, we only need the inequality in the direction we stated, and this simplifies our proofs. These two final lemmas are glued together in the proof of theorem \ref{TeoCod} by a technical result in $\OG$. 

\begin{lemma}\label{LemSup}
Let us consider for each $i\in\mathbb{N}$ a finite set $A_i\subset \mathbb{O}$. Then,
	$$\sup\{\sup{A_i};\,i\in \mathbb{N}\}=\sup\{\cup_{i\in \mathbb{N}}A_i\}$$
\end{lemma}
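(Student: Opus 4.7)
The plan is to reduce the statement to a standard property of suprema in the complete lattice $\overline{\mathbb{O}}$. Since $\overline{\mathbb{O}}$ is by construction a complete lattice and each $A_i$ is a subset of $\mathbb{O}\subset \overline{\mathbb{O}}$, the supremum $\sup A_i$ exists in $\overline{\mathbb{O}}$ for every $i$, and so does $\sup\cup_i A_i$. In fact, finiteness of $A_i$ plays no real role in the argument; it only matters that the suprema exist, which is guaranteed by completeness. The strategy is therefore to verify the two inequalities separately using only the defining universal property of $\sup$.

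For the inequality $\sup\{\sup A_i;\,i\in\N\}\leq \sup\{\cup_i A_i\}$, I would fix $i$ and observe that every $a\in A_i$ belongs to $\cup_j A_j$, hence $a\leq \sup \cup_j A_j$. Taking the supremum over $a\in A_i$ yields $\sup A_i \leq \sup\cup_j A_j$, and then taking the supremum over $i$ finishes this direction.

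For the reverse inequality $\sup\{\cup_i A_i\}\leq \sup\{\sup A_i;\,i\in\N\}$, I would pick any $a\in \cup_j A_j$. Then $a\in A_{i_0}$ for some $i_0$, so $a\leq \sup A_{i_0}\leq \sup_j \sup A_j$. Since this bound holds for every $a$ in the union, the defining property of $\sup\cup_j A_j$ gives the desired inequality.

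No serious obstacle is expected, since the statement is formal and holds in any complete lattice. The only care needed is to stay inside $\overline{\mathbb{O}}$ rather than $\mathbb{O}$, because the intermediate suprema $\sup A_i$ may not represent actual classes in $\mathbb{O}$ even though each $A_i$ is finite; completeness of $\overline{\mathbb{O}}$ is what allows us to manipulate them freely.
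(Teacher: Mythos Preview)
Your proof is correct and follows essentially the same two-inequality argument as the paper, using only the universal property of suprema in $\overline{\mathbb{O}}$. One minor remark: by Lemma~\ref{LemMax} the supremum of a finite subset of $\mathbb{O}$ actually lies in $\mathbb{O}$, so your caution that $\sup A_i$ might fall in $\overline{\mathbb{O}}\setminus\mathbb{O}$ is unnecessary here, though harmless.
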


Before we prove the lemmas, we would like assume them to be true and conclude the proof of Theorem \ref{TeoCod}. 

\begin{proof}[Proof of theorem \ref{TeoCod}] 
For any $\F\in \Sigma$, we know by lemma \ref{Lem1} that  $[c_{f,\F}(n)]\leq [s_{f,\e}(n)]$ $\leq o(f)$. When the supremum is taken over $\F\in \Sigma$, we deduce that 
\[o(f)\geq \sup\{[c_{f,\F}(n)]\in \OG: \F\in \Sigma\}.\]

For the other inequality, let us take $\e>0$. By lemma \ref{Lem2}, we obtain a finite family $\F$ of wandering compact neighborhoods such that $[s_{f,\e}(n)]\leq[ c_{f,\F}(n)]$. We apply lemma \ref{Lem3} to said family, and we see that
\[ [s_{f,\e}(n)] \leq \sup\{[c_{f,\F'}(n)]; \F'\subset \F \,\text{is disjoint}\}.\]
For each $\F'\subset \F$ formed by disjoint sets, we apply lemma \ref{Lem4} and lemma \ref{LemSup} to conclude
\[ [s_{f,\e}(n)] \leq \sup\{[c_{f,\F'}(n)]; \F'\subset \F \text{ and }\F'\in \Sigma\}.\]
Therefore,
\[[s_{f,\e}(n)] \leq \sup\{[c_{f,\F}(n)]\in \OG: \F\in \Sigma\};\]
as this happens for every $\e$, when we take the supremum over $\e$, we infer our final inequality. The switch of $\Sigma$ by $\Sigma_\delta$ is possible because in lemma \ref{Lem2}, we can select a family $\F$ in which every element has a diameter less than $\delta$. This concludes the proof of theorem \ref{TeoCod}. 
\end{proof}

Having proven theorem \ref{TeoCod}, we now move to proving the previously stated lemmas. We would like to highlight that the proof of lemma \ref{Lem1} is contained in the proof of Lemma 2.4 in \cite{HaRo19}. As the proof of our result uses only a fraction of their proof, we decided to include it in this manuscript, although the adaptations are minor. In the proof of lemma \ref{Lem2}, we find the technical jump from a non-wandering set with only one point to finite non-wandering set. The idea to solve this is completely new. For the proof of lemmas \ref{Lem3} and \ref{Lem4}, something similar happens to lemma \ref{Lem1}, yet in this case, the technicalities of working in $\OG$ are deeper; therefore, we feel compelled to include said proofs. 

\begin{proof}[Proof of Lemma \ref{Lem1}]

	Let $\F=\{Y_1,\cdots ,Y_L\}$ be a disjoint family of the subsets of $\F$ and choose some compact disjoint wandering neighborhoods $U_1,\cdots, U_L$ of the elements $Y_1,\cdots, Y_L$, respectively. Let $\e>0$ be smaller than the distance from $Y_i$ to the complement of $U_i$ for every $i=1,\cdots, L$. 
	
	Fix some positive integer $n$. For every $\mathcal{G}\subset \F$, let $\A_n(\F,\mathcal{G})$ denote a set of elements of $\A_n(f,\F)$ whose set of letters is exactly $\mathcal{G}\cup \{\infty_{\F}\}$. We fix some $\mathcal{G}\subset \F$ and we consider two points $x,y$ in $M$ and two words $\underline{w}=(w_0,\cdots, w_{n-1})$, $\underline{z}=(z_0,\cdots, z_{n-1})$ in $\A_n(\F,\mathcal{G})$, which represent the orbits $(x,\cdots, f^{n-1}(x))$ and $(y,\cdots, f^{n-1}(y))$, respectively. Then, we claim that if the symbols $\underline{w}$ and $\underline{z}$ are distinct, points $x$ and $y$ are $(n,\e)$-separated. Indeed, let $i\in\{0,\cdots, n-1\}$ be such that $w_i\neq z_i$. If both $w_i\neq \infty$, $z_i\neq \infty$, then $f^i(x)$ and $f^i(y)$ belong to the distinct sets of $Y_i$, and they are more than $\e$ apart. If, say, $w_i=\infty$, then $f^i(y)\in Y_{z_i}$ and $f^i(x)\notin Y_{z_i}$. By definition of $\A_n(\F,\mathcal{G})$, there exists some $j\neq i$ in $\{0,\cdots, n-1\}$ such that $f^j(x)\in Y_{z_i}\subset U_{z_i}$. As $U_{z_i}$ is wandering, we see that $f^i(x)\notin U_{z_i}$; thus, yet again, $f^i(x)$ and $f^i(y)$ are more than $\e$ apart, and the claim is proved. 
	
	As an immediate consequence, we deduce $\#\A_n(\F,\mathcal{G})=c_{f,\mathcal{G}}(n)\leq s_{f,\e}(n)$. Since the sets $\A_n(\F,\mathcal{G})$ form a partition of $\A_n(f,\F)$ into $2^{\# \F}=2^L$ elements, we infer	
	$$c_{f,\F}(n)\leq \sum_{\mathcal{G}\subset \F} c_{f,\mathcal{G}}(n)\leq 2^L s_{f,\e}(n),$$
	as we wanted.
\end{proof}

\begin{proof}[Proof of lemma \ref{Lem2}]

Let us suppose that $\Omega(f)=\{p_1,\cdots, p_k\}$. We want to show that for every $\e >0$, there exists a finite family $\F$ of wandering compact neighborhoods of $M\setminus\Omega(f)$ and a constant $B>0$ such that $s_{f,\e}(n)\leq B c_{f,\F}(n)$, for every $n$. Given $\e >0$, let $\F=\{Y_1,\cdots, Y_L\}$ be a family of wandering subsets of $X\setminus\Omega(f)$ with diameters less than $\min\{\e,\delta\}$ and such that each connected component of $M\setminus \cup\F$ contains a point $p_i \in \Omega(f)$ and has a diameter of less than $\e$. Let us denote such components as $V_1, V_2, \cdots, V_k$. If necessary, we could reconstruct the family to ensure the property, if $Y_i\cap Y_j\neq \emptyset$, then $Y_i\cup Y_j$ is wandering. This is not required in this lemma, but it will be in the next one. 
	
	We choose a positive integer $n$ and consider a maximal $(n,\e)$-separated set $E_n$. Let $\Phi:E_n \rightarrow \A_{n}(f,\F)$ be the map that associates for every point $x$ in $E_n$ some coding $\Phi(x)=w \in \A_{n}(f,\F)$ of the sequence $(x,f(x),\cdots, f^{n-1}(x))$ with respect to the family $\F$. Although the map $\Phi$ may not be injective, we will show that the cardinality of set $\Phi^{-1}(w)$ is bounded by a constant that does not depend on $n$, for every word $w\in \A_{n}(f,\F)$.

	To prove the previous assertion, we construct an auxiliary graph $G$. Its vertices are given by the set $V(G)=\{Y_1,\cdots, Y_L, V_1,\cdots, V_k\}$, and the edges are the set $E(G)$ of pairs $(b_1,b_2)\in V(G)^2$ such that $f(b_1)\cap b_2\neq \emptyset$. 
	
	Note that $G$ satisfies the following properties:
	\begin{enumerate}
	\item  There is no edge of type $(Y_i,Y_i)$, as $Y_i$ is a wandering set for every $i=1,\cdots, L$. More generally, there is no path in graph $G$ with both initial and final vertices $Y_i$.
	\item If $\e$ is small enough and there exists an edge of type $(Y_i,V_j)$, then there is no edge of type $(Y_i,V_l)$ with $l\neq j$. To see this, let $ d=\min \{\frac{d(p_i,p_j)}{2},i\neq j\}$. By the uniform continuity of $f$, we know that there exists $\delta(d) > 0$, such that $d(x,y)<\delta(d)$ implies $d(f(x),f(y))<d$, for every $x,y \in M$. If we choose $\e<\delta(d)$, then for every $x_j,x_l \in Y_i$, we see $d(f(x_j),f(x_l))<d$.
	
	\item If $\e$ is small enough and there exists an edge of type $(V_i,V_j)$, then there is no edge of type $(V_i,V_l)$ with $l\neq j$. That is, in the edges of type $(V_i, V_j)$, each $V_i$ is in only one edge as the initial vertex and in only one edge as the final vertex. 
	\end{enumerate}
	
	Recall that a path in graph $G$ is a finite sequence of edges of the form $$[(b_0,b_1),(b_1,b_2),\cdots, (b_{j-1},b_{j})],$$ where $b_i\in V(G)$. The number of edges in a path is called its length. A path in which all the edges are distinct is a trail. We shall simplify the notation by describing an edge in $E(G)$ as $a_i$. 
		
Let us consider $P_n(G)$ as the set of all paths in $G$ with length $n-1$ and that the map $P:E_n \rightarrow P_n(G)$ is defined as follows: $P$ associates a path $P(x)=[(b_0,b_1),(b_1,b_2),\cdots,$  $(b_{n-2},b_{n-1})]$ to every point $x\in E_n$ such that $f^i(x) \in b_i$ for $i=0,\cdots, n-1$.
	
We claim that the map $P$ is injective. In fact, consider $x, y \in E_n$ such that $P(x)=P(y)$. As the diameter of the sets $b_j\in V(G)$ is less than $\e$, we see that $d(f^i(x),f^i(y))$ $ \leq \e$ for all $i\leq n-1$, and therefore, $x=y$.

	Now, consider the map $Q:P_n(G)\rightarrow \A_n(f,\F)$ defined as follows: For each path $p=[(b_0,b_1),(b_1,b_2),\cdots, (b_{n-2},b_{n-1})]\in P_n(G)$, where $b_i\in V(G)$, the map $Q$ associates a word $Q(p)=w=(b^{*}_{0}, b^{*}_{1}, \cdots, b^{*}_{n-1}) \in \A_n(f,\F)$ and each $b^{*}_{i}$ is given by
	
	$$b_{i}^{*}=\left\{\begin{matrix}
	Y_{j_i},\text{ if }\, b_i=Y_{j_i} \\ 
	\infty, \text{ if }\, b_i=V_{j_i} 
	\end{matrix}\right. .$$
	
Observe that we can choose the map $P$ such that $Q\circ P = \Phi$. 

\begin{center}
	\begin{tikzpicture}[x=0.75pt,y=0.75pt,yscale=-1,xscale=1]
		\draw    (167.5,90) -- (207.5,90) ;
		\draw [shift={(209.5,90)}, rotate = 180] [color={rgb, 255:red, 0; green, 0; blue, 0 }  ][line width=0.75]    (10.93,-3.29) .. controls (6.95,-1.4) and (3.31,-0.3) .. (0,0) .. controls (3.31,0.3) and (6.95,1.4) .. (10.93,3.29)   ;
		\draw    (199.5,50) -- (227.91,68.96) ;
		\draw [shift={(229.57,70.07)}, rotate = 213.72] [color={rgb, 255:red, 0; green, 0; blue, 0 }  ][line width=0.75]    (10.93,-3.29) .. controls (6.95,-1.4) and (3.31,-0.3) .. (0,0) .. controls (3.31,0.3) and (6.95,1.4) .. (10.93,3.29)   ;
		\draw    (170.5,51) -- (143.66,68.89) ;
		\draw [shift={(142,70)}, rotate = 326.31] [color={rgb, 255:red, 0; green, 0; blue, 0 }  ][line width=0.75]    (10.93,-3.29) .. controls (6.95,-1.4) and (3.31,-0.3) .. (0,0) .. controls (3.31,0.3) and (6.95,1.4) .. (10.93,3.29)   ;
		
		\draw (175,35.4) node [anchor=north west][inner sep=0.75pt]    {$E_n$};
		\draw (115,78.4) node [anchor=north west][inner sep=0.75pt]    {$P_{n}( G)$};
		\draw (217,77.4) node [anchor=north west][inner sep=0.75pt]    {$\mathcal{A}_{n}( f,\mathcal{F})$};
		\draw (145,37.4) node [anchor=north west][inner sep=0.75pt]    {$P$};
		\draw (176,95.4) node [anchor=north west][inner sep=0.75pt]    {$Q$};
		\draw (219,36.4) node [anchor=north west][inner sep=0.75pt]    {$\Phi $};
	\end{tikzpicture}
\end{center}	

Let us consider $T(G)$ as the set of all the trails in $G$. Given a path $p=[a_0,a_1,\cdots, a_{n-2}]\in P_n(G)$, we select from $p$ the edges $a_{k_i}$ by induction as follows: $k_0=0$ and $k_i=\min \{k>k_{i-1}; a_k\neq a_{k_{j}}, \text{ for } j\leq i-1\}$. With this, we construct a new path $[a_{k_0},a_{k_1},\cdots, a_{k_l}]$ for some $l>0$. This process eliminates the repeated edges of $p$, transforming the path $p$ with $n-1$ edges into a trail in $T(G)$ with the same edges. Therefore, we construct a map $\phi:P_n(G)\to T(G)$, which removes repeated edges. In particular, for our case, by properties 1, 2 and 3, the map $\phi$ eliminates only the edges of type $(V_i, V_j)$.
	
The following diagram adds the new map 	$\phi: P_n(G)\rightarrow T(G)$ to the previous one.

\begin{center}
	\begin{tikzpicture}[x=0.75pt,y=0.75pt,yscale=-1,xscale=1]
		
		\draw    (140,106) -- (139.6,133.07) ;
		\draw [shift={(139.57,135.07)}, rotate = 270.84000000000003] [color={rgb, 255:red, 0; green, 0; blue, 0 }  ][line width=0.75]    (10.93,-3.29) .. controls (6.95,-1.4) and (3.31,-0.3) .. (0,0) .. controls (3.31,0.3) and (6.95,1.4) .. (10.93,3.29)   ;
		\draw    (167.5,90) -- (207.5,90) ;
		\draw [shift={(209.5,90)}, rotate = 180] [color={rgb, 255:red, 0; green, 0; blue, 0 }  ][line width=0.75]    (10.93,-3.29) .. controls (6.95,-1.4) and (3.31,-0.3) .. (0,0) .. controls (3.31,0.3) and (6.95,1.4) .. (10.93,3.29)   ;
		\draw    (199.5,50) -- (227.91,68.96) ;
		\draw [shift={(229.57,70.07)}, rotate = 213.72] [color={rgb, 255:red, 0; green, 0; blue, 0 }  ][line width=0.75]    (10.93,-3.29) .. controls (6.95,-1.4) and (3.31,-0.3) .. (0,0) .. controls (3.31,0.3) and (6.95,1.4) .. (10.93,3.29)   ;
		\draw    (170.5,51) -- (143.66,68.89) ;
		\draw [shift={(142,70)}, rotate = 326.31] [color={rgb, 255:red, 0; green, 0; blue, 0 }  ][line width=0.75]    (10.93,-3.29) .. controls (6.95,-1.4) and (3.31,-0.3) .. (0,0) .. controls (3.31,0.3) and (6.95,1.4) .. (10.93,3.29)   ;
		
		\draw (175,35.4) node [anchor=north west][inner sep=0.75pt]    {$E_n$};
		\draw (115,78.4) node [anchor=north west][inner sep=0.75pt]    {$P_{n}( G)$};
		\draw (217,77.4) node [anchor=north west][inner sep=0.75pt]    {$\mathcal{A}_{n}( f,\mathcal{F})$};
		\draw (123,139.4) node [anchor=north west][inner sep=0.75pt]    {$T( G)$};
		\draw (120,107.4) node [anchor=north west][inner sep=0.75pt]    {$\phi $};
		\draw (145,37.4) node [anchor=north west][inner sep=0.75pt]    {$P$};
		\draw (176,95.4) node [anchor=north west][inner sep=0.75pt]    {$Q$};
		\draw (219,36.4) node [anchor=north west][inner sep=0.75pt]    {$\Phi $};
	\end{tikzpicture}
\end{center}	

We assert now that for each word $w \in \A_n(f,\F)$, the restriction $\phi:Q^{-1}(w)\rightarrow T(G)$ of the map $\phi$ is injective. Let us assume the assertion for now and observe that for every word $w \in \A_n(f,\F)$:
$$ \# (\Phi^{-1}(w))= \# (P^{-1}\circ Q^{-1}(w)) \leq \# (Q^{-1}(w))=\# (\phi \circ Q^{-1}(w))\leq \# (T(G)).$$

As the graph $(G,V(G))$ is finite and $\# (T(G))$ is a constant that does not depend on $n$, let us denote it by $\# (T(G))=B$. Thus,
	$$s_{f,\e}(n) = \# E_n \leq \max\{\# \Phi^{-1}(w):w\in \A_n(f,\F)\} \# \A_n(f,\F) \leq B \# (\A_n(f,\F)).$$
	
	Then,
	$$s_{f,\e}(n)\leq B c_{f,\F}(n).$$

Now, the assertion needs to be proved that for each word $w \in \A_n(f,\F)$, the restriction $\phi:Q^{-1}(w)\rightarrow T(G)$ of the map $\phi$ is injective.

Let $p_1$ and $p_2$ be two paths in $P_n(G)$ such that $Q(p_1)=Q(p_2)=w \in \A_n(f,\F)$ and $\phi(p_1)=\phi(p_2)\in T(G)$. We want to show that $p_1=p_2$. If we write
	$$\begin{matrix}
		p_1= &[(b^{1}_{0},b^{1}_{1}), (b^{1}_{1},b^{1}_{2}), \cdots, (b^{1}_{n-2},b^{1}_{n-1})] \\ 
		p_2= &[(b^{2}_{0},b^{2}_{1}), (b^{2}_{1},b^{2}_{2}), \cdots, (b^{2}_{n-2},b^{2}_{n-1})] 
	\end{matrix},$$
	let $(b^{1}_{i},b^{1}_{i+1})$ for $i\in\{0,\cdots, n-2\}$ be an edge of the path $p_1$. Then, we have three possibilities.
	\begin{itemize}
	\item[1.] $(b^{1}_{i},b^{1}_{i+1})$ is an edge of type $(Y_{j_i},Y_{j_{i+1}})$:
	
	Since $Q(p_1)=Q(p_2)=w=(b^{*}_{0},b^{*}_{1}, \cdots, b^{*}_{n-1})$, we know that $b^{1}_{i}=Y_{j_i}$ and $b^{1}_{i+1}=Y_{j_{i+1}}$. Thus, $(b^{2}_{i},b^{2}_{i+1})=(Y_{j_i},Y_{j_{i+1}})=(b^{1}_{i},b^{1}_{i+1})$.
	
	\item[2.] $(b^{1}_{i},b^{1}_{i+1})$ is an edge of type $(Y_{j_i},V_{j_{i+1}})$:
	
	This type of edge does not repeat and is, therefore, not eliminated by the map $\phi$. Thus, we infer that $a_l=(Y_{j_i},Y_{j_{i+1}})$, for some $l\leq i$, is an edge of $\phi(p_1)=\phi(p_2)$, and therefore, $(b^{2}_{i},b^{2}_{i+1})=a_l=(Y_{j_i},V_{j_{i+1}})=(b^{1}_{i},b^{1}_{i+1})$.
	
	\item[3.] $(b^{1}_{i},b^{1}_{i+1})$ is an edge of type $(V_{j_i},V_{j_{i+1}})$:
	
	\begin{itemize}
	\item If this edge is not eliminated by the map $\phi$, then we know that $a_l=(V_{j_i},V_{j_{i+1}})$, for some $l\leq i$, is an edge of $\phi(p_1)=\phi(p_2)$, and thus, $(b^{2}_{i},b^{2}_{i+1})=a_l=(V_{j_i},V_{j_{i+1}})=(b^{1}_{i},b^{1}_{i+1})$.
	
	\item If this edge is eliminated by the map $\phi$, then we consider the previous edge 	
	$$\dy (b^{1}_{i-1},b^{1}_{i})=\left\{\begin{matrix}
		(Y_{j_{i-1}},V_{j_i})\\ 
		(V_{j_{i-1}},V_{j_i})
	\end{matrix}\right. :$$
	\begin{itemize}
		\item[--] If $(b^{1}_{i-1},b^{1}_{i})=(Y_{j_{i-1}},V_{j_i})$, then the edge $(b^{1}_{i},b^{1}_{i+1})$ will not be eliminated by the map $\phi$, and we deduce that $a_l=(V_{j_i},V_{j_{i+1}})$, for some $l\leq i$, is an edge of $\phi(p_1)=\phi(p_2)$. Therefore, $(b^{2}_{i},b^{2}_{i+1})=a_l=(V_{j_i},V_{j_{i+1}})=(b^{1}_{i}, b^{1}_{i+1})$.
		
		\item[--] If $(b^{1}_{i-1},b^{1}_{i})=(V_{j_{i-1}},V_{j_i})$, then it may or may not be eliminated by the map $\phi$. If it is eliminated, then we repeat the argument until we obtain $(b^{1}_{i-m},b^{1}_{i-m+1})$ for some $m\leq i$, which is not eliminated by the map $\phi$. This must happen because the paths have a finite length.	
	\end{itemize}
	\end{itemize}
\end{itemize}
	
In all cases, we conclude that the edges of the paths $p_1$ and $p_2$ are necessarily the same, as we wanted.
\end{proof}

Before we prove lemma \ref{Lem3}, we need  to prove lemma \ref{LemSup} and two other intermediary results.

\begin{proof}[Proof of lemma \ref{LemSup}]
Suppose that $A_i=\{[a_{i1}(n)], \cdots , [a_{ik_i}(n)]\}$ for every $i\in \mathbb{N}$. It is easy to see that $\sup\{\cup_{i\in\mathbb{N}}A_i\}\geq [a_{ij}(n)]$, for all $[a_{ij}(n)]\in A_i$, for all $i\in\mathbb{N}$. Then, $\sup A_i\leq \sup\{\cup_{i\in\mathbb{N}}A_i\}$, for all $i\in\mathbb{N}$, and $$\sup\{\sup A_i;\,i\in\mathbb{N}\}\leq \sup\{\cup_{i\in\mathbb{N}}A_i\}.$$
	 
	On the other hand, for all $[a_{ij}(n)]$, we know $[a_{ij}(n)] \leq \sup A_i\leq \sup\{\sup A_i;\,i\in\mathbb{N}\}$, and $$\displaystyle \sup\{\cup_{i\in \mathbb{N}}A_i\}\leq \sup\{\sup{A_i};\,i\in \mathbb{N}\}.$$
\end{proof}

\begin{lemma}\label{LemMax}
	Let $[a(n)],[b(n)] \in \mathbb{O}$, then $$\sup\{[a(n)],[b(n)]\}=[\max\{a(n),b(n)\}]\in \mathbb{O}.$$
\end{lemma}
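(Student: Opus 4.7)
The plan is to verify the two defining properties of the supremum: $[\max\{a(n),b(n)\}]$ is an upper bound of $\{[a(n)],[b(n)]\}$, and it is the least such upper bound. Since $\mathbb{O}$ is embedded in the complete lattice $\overline{\mathbb{O}}$ and any least upper bound computed inside $\mathbb{O}$ automatically coincides with the supremum in $\overline{\mathbb{O}}$, this suffices.

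First I would check well-definedness: the sequence $\max\{a(n),b(n)\}$ is clearly non-decreasing because $a$ and $b$ are, and it takes values in $[0,\infty)$, hence it belongs to $\mathcal{O}$ and defines a class in $\mathbb{O}$. For the upper bound property, the pointwise inequalities $a(n) \leq \max\{a(n),b(n)\}$ and $b(n) \leq \max\{a(n),b(n)\}$ hold for every $n$, which give (with constant $C=1$) $[a(n)] \leq [\max\{a(n),b(n)\}]$ and $[b(n)] \leq [\max\{a(n),b(n)\}]$ in $\mathbb{O}$.

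Next, for the least-upper-bound property, I would take an arbitrary $[c(n)] \in \mathbb{O}$ satisfying $[a(n)] \leq [c(n)]$ and $[b(n)] \leq [c(n)]$. By definition of the order on $\mathbb{O}$, this provides constants $C_1, C_2 > 0$ with $a(n) \leq C_1 c(n)$ and $b(n) \leq C_2 c(n)$ for all $n \in \mathbb{N}$. Setting $C = \max\{C_1, C_2\}$ yields $\max\{a(n), b(n)\} \leq C\, c(n)$ for every $n$, hence $[\max\{a(n),b(n)\}] \leq [c(n)]$. Since this holds for every upper bound $[c(n)]$, the class $[\max\{a(n),b(n)\}]$ is the least upper bound and therefore equals $\sup\{[a(n)],[b(n)]\}$ in both $\mathbb{O}$ and $\overline{\mathbb{O}}$.

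No substantial obstacle is expected; the argument is essentially a one-line unpacking of the definitions of the order relation and the supremum, together with the elementary observation that the maximum of two non-decreasing sequences is non-decreasing. The only minor point requiring care is that the supremum is a priori taken in $\overline{\mathbb{O}}$, but exhibiting an explicit least upper bound inside $\mathbb{O}$ settles both computations simultaneously.
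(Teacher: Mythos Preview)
Your proof is correct and follows essentially the same approach as the paper: both show that $[\max\{a(n),b(n)\}]$ is an upper bound and then verify the least-upper-bound property via the constants $C_1,C_2$ and $C=\max\{C_1,C_2\}$. The only difference is cosmetic: the paper argues the second step by contradiction (assuming a strictly smaller upper bound and inserting an intermediate $[c(n)]$), whereas you verify directly that any upper bound $[c(n)]\in\mathbb{O}$ dominates $[\max\{a(n),b(n)\}]$; your direct route is slightly cleaner and also makes explicit the point about the Dedekind--MacNeille completion preserving existing suprema, which justifies why checking upper bounds in $\mathbb{O}$ suffices.
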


\begin{proof}
	We know that $[a(n)]\leq [\max\{a(n),b(n)\}] $ and $[b(n)]\leq [\max\{a(n),b(n)\}]$. Then $\sup\{[a(n)],[b(n)]\}\leq [\max\{a(n),b(n)\}]$. However, if we suppose $$\sup\{[a(n)],[b(n)]\}< [\max\{a(n),b(n)\}],$$ then there exists $[c(n)]\in \mathbb{O}$ such that $$\sup\{[a(n)],[b(n)]\}<[c(n)]< [\max\{a(n),b(n)\}].$$ 
	As $\sup\{[a(n)],[b(n)]\}<[c(n)]$, and by definition, $[a(n)]\leq\sup\{[a(n)],[b(n)]\},$ then $[a(n)]\leq [c(n)]$. Using the same argument, we also deduce $[b(n)]\leq [c(n)]$. Thus, there must exist constants $k_1,k_2>0$ such that $a(n)\leq k_1 c(n)$ and $b(n)\leq k_2 c(n)$. Let $k=\max\{k_1,k_2\}$ and observe that $\max\{a(n),b(n)\}\leq kc(n)$. This implies $[\max\{a(n),b(n)\}]$ $\leq [c(n)]$, a contradiction. Therefore, $$\sup\{[a(n)],[b(n)]\}=[\max\{a(n),b(n)\}],$$ as we wanted. 
\end{proof}	

We would like to point out that $\sup\{[a(n)],[b(n)]\}$ may not be either $[a(n)]$ or $[b(n)]$ but, in fact, a third order of growth. It is only one of them when they are comparable according to the partial order. The previous proposition happens to only be true when we consider the supremum of a finite set of orders of growth. Otherwise, we could have that the supremum is an element of $\overline{\mathbb{O}}\setminus\mathbb{O}$.

\begin{lemma}\label{LemWanAdd}
Let $\F$ be a finite family of subsets of $M$. If $\F=\{Y_1,\cdots, Y_L\}$ is such that $Y_1\cup Y_2$ is wandering, then 
$$[c_{f,\F}(n)]=\sup\{[c_{f,\F_1}(n)],[c_{f,\F_2}(n)]\},$$
where $\F_1=\{Y_1,Y_3,\cdots, Y_L\}$ and $\F_2=\{Y_2,Y_3,\cdots, Y_L\}$.
\end{lemma}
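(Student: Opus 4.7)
The plan is to exploit the wandering hypothesis on $Y_1\cup Y_2$ to partition $\A_n(f,\F)$ according to which of $Y_1$ and $Y_2$ occurs as a letter, and then to compare each piece with $\A_n(f,\F_j)$ via natural maps in both directions. Since $Y_1\cup Y_2$ is wandering, any orbit meets $Y_1\cup Y_2$ at most once, so in any $\underline w\in\A_n(f,\F)$ the letters $Y_1$ and $Y_2$ together appear at most once. This produces a disjoint decomposition $\A_n(f,\F)=A^{(0)}\sqcup A^{(1)}\sqcup A^{(2)}$, where $A^{(0)}$ collects the codings containing neither $Y_1$ nor $Y_2$, $A^{(1)}$ those containing a single $Y_1$ (and no $Y_2$), and $A^{(2)}$ those containing a single $Y_2$ (and no $Y_1$).

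First I would produce an injection in the easy direction: for $j\in\{1,2\}$ define $\pi_j:A^{(0)}\cup A^{(j)}\to\A_n(f,\F_j)$ by keeping every letter of $\F_j$ fixed and sending $\infty_\F$ to $\infty_{\F_j}$. This is well-defined because $\cup\F_j\subset \cup\F$ implies $\infty_\F\subset\infty_{\F_j}$, so the original witness of $\underline w$ still witnesses $\pi_j(\underline w)$. Injectivity holds because the inputs contain no $Y_{3-j}$, so an $\infty_{\F_j}$-letter in the image can only come from an $\infty_\F$-letter. This yields $|A^{(0)}|+|A^{(j)}|\le c_{f,\F_j}(n)$ for $j=1,2$.

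Next, for the opposite direction, after fixing a witness $x_{\underline w}$ for each $\underline w\in\A_n(f,\F_j)$, define $\iota_j:\A_n(f,\F_j)\to\A_n(f,\F)$ by preserving every letter of $\F_j$ and, at each position $i$ with $w_i=\infty_{\F_j}$, setting $\iota_j(\underline w)_i=Y_{3-j}$ if $f^i(x_{\underline w})\in Y_{3-j}$ and $\iota_j(\underline w)_i=\infty_\F$ otherwise. A direct check shows that $\iota_j(\underline w)$ is a valid $\F$-coding witnessed by $x_{\underline w}$, and $\iota_j$ is injective because $\underline w$ can be reconstructed from $\iota_j(\underline w)$ by collapsing both $Y_{3-j}$ and $\infty_\F$ back to $\infty_{\F_j}$. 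Hence $c_{f,\F_j}(n)\le c_{f,\F}(n)$.

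Assembling the two directions gives
\[
\max\{c_{f,\F_1}(n),c_{f,\F_2}(n)\}\le c_{f,\F}(n)=|A^{(0)}|+|A^{(1)}|+|A^{(2)}|\le c_{f,\F_1}(n)+c_{f,\F_2}(n)\le 2\max\{c_{f,\F_1}(n),c_{f,\F_2}(n)\},
\]
so $c_{f,\F}(n)\approx \max\{c_{f,\F_1}(n),c_{f,\F_2}(n)\}$. Applying Lemma \ref{LemMax} then identifies this with $\sup\{[c_{f,\F_1}(n)],[c_{f,\F_2}(n)]\}$ in $\OG$, which is the claim. I expect the subtlest step to be the well-definedness of $\pi_j$: this is precisely where wandering of $Y_1\cup Y_2$ enters, since it is what forces the inputs to $\pi_j$ to omit the letter $Y_{3-j}$, so that replacing $\infty_\F$ by $\infty_{\F_j}$ remains faithful and injective.
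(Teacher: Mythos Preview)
Your proof is correct and follows essentially the same approach as the paper: both exploit that wandering of $Y_1\cup Y_2$ forces at most one of $Y_1,Y_2$ to appear in any coding, split $\A_n(f,\F)$ accordingly, obtain the sandwich $\max\{c_{f,\F_1}(n),c_{f,\F_2}(n)\}\le c_{f,\F}(n)\le 2\max\{c_{f,\F_1}(n),c_{f,\F_2}(n)\}$, and finish with Lemma~\ref{LemMax}. Your version is in fact more careful than the paper's, which simply asserts ``$\A_n(f,\F)=\A_n(f,\F_1)\cup\A_n(f,\F_2)$, a disjoint union'' without addressing the alphabet mismatch between $\infty_\F$ and $\infty_{\F_j}$; your explicit maps $\pi_j$ and $\iota_j$ make that identification rigorous.
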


\begin{proof}
As $Y_1\cup Y_2$ is wandering, no word in $\A_n(f,\F)$ contains both letters $'Y_1'$ and $'Y_2'$; as a consequence,
$$\A_n(f,\F)=\A_n(f,\F_1)\cup \A_n(f,\F_2).$$
This is a disjoint union. Thus,
$$\begin{array}{rcl}
c_{f,\F}(n)&=& c_{f,\F_1}(n) +c_{f,\F_2}(n)\\
&\leq & 2 \max \{c_{f,\F_1}(n),c_{f,\F_2}(n)\},
\end{array}$$
and then by lemma \ref{LemMax},
$$[c_{f,\F}(n)]\leq [\max \{c_{f,\F_1}(n),c_{f,\F_2}(n)\}]=\sup\{[c_{f,\F_1}(n)],[c_{f,\F_2}(n)]\}.$$
To reverse the inequality, it can be easily observed that $c_{f,\F_i}(n)\leq c_{f,\F}(n)$ for $i=1,2$. Therefore, $$\max \{c_{f,\F_1}(n),c_{f,\F_2}(n)\}\leq c_{f,\F}(n) ,$$
and by lemma \ref{LemMax}, 
$$\sup\{[c_{f,\F_1}(n)],[c_{f,\F_2}(n)]\}=[\max \{c_{f,\F_1}(n),c_{f,\F_2}(n)\}] \leq [c_{f,\F}(n)],$$
which entails the wanted equality.	
\end{proof}

\begin{proof}[Proof of lemma \ref{Lem3}]

Let us consider $\F=\{Y_1,\cdots,Y_L\}$ a finite family of wandering compact neighborhoods of $M\setminus \Omega(f)$ such that if $Y_i$ meets $Y_j$, then $Y_i\cup Y_j$ is a wandering set. 

If $\F$ is disjoint, $[c_{f,\F}(n)]\in\{[c_{f,\F'}(n)]; \F'\subset \F \text{ is disjoint}\}$, and then, $[c_{f,\F}(n)]\leq \sup\{[c_{f,\F'}(n)]; \F'\subset \F \text{ is disjoint}\}$.
	
Otherwise, we select two distinct elements that intersect. To simplify the notation, we can name them $Y_1$ and $Y_2$. Then, by lemma \ref{LemWanAdd}, we know that
$$[c_{f,\F}(n)]=\sup\{[c_{f,\F_1}(n)],[c_{f,\F_2}(n)]\},$$
where $\F_1=\{Y_1,Y_3,\cdots, Y_k\}$ and $\F_2=\{Y_2,Y_3,\cdots, Y_k\}$. If $\F_1$ and $\F_2$ are both disjoint, we see 
$$\sup\{[c_{f,\F_1}(n)],[c_{f,\F_2}(n)]\}\leq \sup\{[c_{f,\F'}(n)]; \F'\subset \F \text{ is disjoint}\},$$
and we conclude.
If either $\F_1$ or $\F_2$ is not a disjoint family, we apply the previous reasoning to each. Let us suppose that both of them are not disjoint families. Based on the previous argument, we construct four families $\F_{11}$, $\F_{12}$, $\F_{21}$ and $\F_{22}$ such that
$$[c_{f,\F}(n)]\leq \sup\{\sup\{[c_{f,\F_{11}}(n)],[c_{f,\F_{12}}(n)]\},\sup\{[c_{f,\F_{21}}(n)],[c_{f,\F_{22}}(n)]\}\}.$$
From lemma \ref{LemSup}, we deduce that 
$$[c_{f,\F}(n)]\leq \sup\{[c_{f,\F_{11}}(n)],[c_{f,\F_{12}}(n)],[c_{f,\F_{21}}(n)],[c_{f,\F_{22}}(n)]\}.$$

We can repeat this process until every family in the $\sup$ on the right side of the inequality is disjoint, and therefore,
$$[c_{f,\F}(n)]\leq \sup\{[c_{f,\F'}(n)]; \F'\subset \F \text{ is disjoint}\}.$$
\end{proof}

\begin{proof}[Proof of Lemma \ref{Lem4}]
Let us consider $\F$ a finite family of disjoint wandering compact neighborhoods that is not mutually singular. We will show that $[c_{f,\F}(n)]$ can be calculated in terms of its subfamilies $\F'\subset \F$ that are mutually singular. The lemma follows by a finite backward induction, as only one set is always mutually singular.
	
	We assume that $\F=\{Y_1,\cdots,Y_L\}$ is not mutually singular. Like in the proof of lemma \ref{Lem1}, for every $\F'\subset \F$, we denote $\A_n(\F,\F')$ as the set of elements of $\A_n(f,\F)$ whose set of letters is exactly $\F'\cup \{\infty\}$. In particular, the elements of $\A_n(\F,\F )$ use the same letters $Y_{1},\cdots, Y_{L}$ and $\infty$. As the $Y_i$'s are wandering, each letter except $\infty$ appears at most once; thus,
	$$\A_{n}(f,\F)=\bigcup_{\F' \subset \F} \A_n (\F,\F').$$
	
	As the $Y_i$'s are not mutually singular, there exists a number $M$ such that if a point $x\in X$ satisfies $f^{n_i}(x)\in Y_i$ and $f^{n_j}(x)\in Y_j$, then $|n_i-n_j|\leq M$. For every $i\neq j \in \{1, \cdots, L\}$, denote the set of elements $w$ of $\A_n(\F,\F)$ by $\A_n(\{i,j\})$ such that the letters $Y_i$ and $Y_j$ appear at places at most $M$ apart. We know
	
	$$\A_n(\F,\F)=\bigcup_{(i,j)} \A_n (\{i,j\}),$$
	and
	$$\A_{n}(f,\F)=\bigcup_{\F' \subsetneq \F} \A_n (\F,\F') \cup \bigcup_{(i,j)} \A_n (\{i,j\}).$$
	Then,
	$$c_{f,\F}(n)=\sum_{\F' \subsetneq \F}c_{\F,\F'}(n) + \sum_{(i,j)}c_{\{i,j\}}(n).$$
	Thus,
	$$\begin{array}{rcl}
		[c_{f,\F}(n)]&=&\left[\sum_{\F' \subsetneq \F}c_{\F,\F'}(n) + \sum_{(i,j)}c_{\{i,j\}}(n)\right] \\ \\
		&\leq&  \left[\max\{\sum_{\F' \subsetneq \F}c_{\F,\F'}(n),\sum_{(i,j)}c_{\{i,j\}}(n)\}\right] \\ \\
		&=& \sup\{[\sum_{\F' \subsetneq \F} c_{\F,\F'}(n)],[\sum_{(i,j)}c_{\{i,j\}}(n)]\} \\ \\
		&\leq& \sup\{\sup_{\F' \subsetneq \F}\{ [c_{\F,\F'}(n)]\}, \sup_{(i,j)}\{[c_{\{i,j\}}(n)]\}\}\\ \\
		&=& \sup\{[c_{\F,\F'}(n)],[c_{\{i,j\}}(n)]; \F' \subsetneq \F, (i,j) \}.
	\end{array}$$

	Moreover, for each positive integer $n$, if $w$ is an element of $\A_n(\{i,j\})$, let $w'$ be obtained from $w$ by changing the letter \textquoteleft$Y_i$', which appears exactly once in $w$, to \textquoteleft$\infty$', and $w'$ is uniquely determined. Since $\F$ is a disjoint family, we infer $w' \in \A_n(\F,\F')$, where $\F'=\{Y_1,\cdots,Y_{i-1},Y_{i+1},\cdots,Y_L\}$. The word $w$ also contains the letter \textquoteleft$Y_j$', and the letter \textquoteleft$Y_i$' is at most $M$ places apart. Thus, $w'$ has at most $2M$ inverse images under the map $w\mapsto w'$. Then, we deduce that
	$$\# \A_n(\{i,j\})\leq 2M \# \A_n(\F,\F').$$
Thus, 
	$$[c_{\{i,j\}}(n)]\leq [c_{\F,\F'}(n)],$$
and therefore, 
	$$[c_{f,\F}(n)]\leq \sup\{[c_{\F,\F'}(n)]; \F' \subsetneq \F\}.$$
Since each $[c_{\F,\F'}(n)]\leq [c_{f,\F'}(n)]$, we see that 
\[ [c_{f,\F}(n)]\leq \sup\{[c_{f,\F'}(n)]; \F' \subsetneq \F\}.\]
As explained in the beginning of the proof by a backward induction and lemma \ref{LemSup}, we deduce an inequality, where the left side has 
$[c_{f,\F}(n)]$ and the right side has only families that are mutually singular. This concludes the lemma.   
\end{proof}

\end{document}